\definecolor{darkblue}{rgb}{0,0,0.8}
\definecolor{darkgreen}{rgb}{0,0.4,0}
\newtheorem{thm}{Theorem}[section]
\newtheorem{prop}[thm]{Proposition}
\newtheorem{lem}[thm]{Lemma}
\newtheorem{cor}[thm]{Corollary}
\theoremstyle{definition}
\newtheorem{quest}[thm]{Question}
\theoremstyle{remark}
\newtheorem{rem}[thm]{Remark}
\newtheorem{rems}[thm]{Remarks}
\numberwithin{equation}{section}
\newcommand{\Proj}{\mathrm{Proj}}
\newcommand{\HF}{\mathrm{HF}}
\newcommand{\Id}{\mathrm{Id}}
\newcommand{\Eff}{\mathrm{Eff}}
\newcommand{\Nef}{\mathrm{Nef}}
\newcommand{\Cox}{\mathrm{Cox}}
\newcommand{\Ker}{\mathrm{Ker}}
\newcommand{\Hilb}{\mathrm{Hilb}}
\newcommand{\Ima}{\mathrm{Im}}
\newcommand{\Hom}{\mathrm{Hom}}
\newcommand{\SL}{\mathrm{SL}}
\newcommand{\Spec}{\mathrm{Spec}}
\newcommand{\Pic}{\mathrm{Pic}}
\newcommand{\Cl}{\mathrm{Cl}}
\newcommand{\Grass}{\mathrm{Grass}}
\newcommand{\isoto}{\myxrightarrow{\,\sim\,}}
\def\myrightarrow{{\setbox\z@\hbox{$\rightarrow$}\dimen0\ht\z@\multiply\dimen0 6\divide\dimen0 10\ht\z@\dimen0\box\z@}}
\def\myrightarrowfill@{\arrowfill@\relbar\relbar\myrightarrow}
\newcommand{\myxrightarrow}[2][]{\ext@arrow 0359\myrightarrowfill@{#1}{#2}}
\newcommand{\extp}{\@ifnextchar^\@extp{\@extp^{\,}}}
\def\@extp^#1{\mathop{\bigwedge\nolimits^{\!#1}}}
\def\loccit{\emph{loc}.\kern3pt \emph{cit}.{}\xspace}
\def\eg{e.g.\kern.3em}
\def\ie{i.e.,\ }
\def\resp {\text{resp.}\kern.3em}
\def\A{\mathbb A}
\def\Z{\mathbb Z}
\def\C{\mathbb C}
\def\G{\mathbb G}
\def\Q{\mathbb Q}
\def\P{\mathbb P}
\def\R{\mathbb R}
\def\lll{\langle}
\def\rr{\rangle}
\def\;{\hspace{.05em}}
\def\cM{\mathcal{M}}
\def\cL{\mathcal{L}}
\def\cO{\mathcal{O}}
\def\cE{\mathcal{E}}
\def\cF{\mathcal{F}}
\def\cG{\mathcal{G}}
\def\cH{\mathcal{H}}
\def\cM{\mathcal{M}}
\def\cQ{\mathcal{Q}}
\def\cV{\mathcal{V}}
\def\cW{\mathcal{W}}
\def\cY{\mathcal{Y}}
\def\cX{\mathcal{X}}
\def\cZ{\mathcal{Z}}
\def\whE{\widehat{E}}
\def\whP{\widehat{P}}
\def\tp{\tilde{p}}
\def\tq{\tilde{q}}
\def\oP{\overline{P}}
\def\wQ{\widetilde{Q}}
\def\wW{\widetilde{W}}
\def\wDelta{\widetilde{\Delta}}
\def\wOmega{\widetilde{\Omega}}
\def\wmu{\widetilde{\mu}}
\def\whmu{\widehat{\mu}}
\begin{document}

\title[The resultant divisor is negative]
{The resultant divisor is negative}

\author{Olivier Benoist}
\address{D\'epartement de math\'ematiques et applications, \'Ecole normale sup\'erieure, CNRS,
45 rue d'Ulm, 75230 Paris Cedex 05, France}
\email{olivier.benoist@ens.fr}

\renewcommand{\abstractname}{Abstract}
\begin{abstract}
Fix two integers $1\leq d<e$. 
We study the birational geometry of a parameter space for pairs of homogeneous polynomials of degrees $d$ and $e$ in two variables (in which the higher degree polynomial is well defined only up to a multiple of the lower degree polynomial). We show that one can run the MMP on this space, and that it eventually contracts the resultant divisor.
\end{abstract}

\maketitle

\section{Introduction}\label{intro}

Studying the birational geometry of moduli or parameter spaces, especially by running the minimal model program (MMP) on them, is now a classical theme in algebraic geometry. The archetypal example is the Hassett--Keel program for running the MMP on the moduli space $\overline{\cM}_g$ of stable curves of genus $g$, which has recently been completed for $g\leq 4$ (see \cite{HH1,HH2, CMJL2, AFS3, ADLW}).

In this article, we consider and solve a similar problem for pairs of homogeneous polynomials of distinct degrees  in two variables. We prove that one can run the MMP on an appropriate parameter space for such pairs, and that this MMP eventually contracts the resultant divisor parameterizing noncoprime polynomials. This demonstrates the existence of projective curves avoiding the resultant divisor.

\subsection{The resultant divisor}
\label{parresultant}

Let $k$ be a field. For $l\in\Z$, let~$V_l:=H^0(\P^1_k,\cO_{\P^1_k}(l))$ be the space of degree $l$ homogeneous polynomials in two variables. Let~${P_l:=\P(V_l)}$ be its projectivization.  We let $\lll F\rr\in P_l$ be the point induced by a nonzero~$F\in V_l$. For~$F\in V_l$ and $m\in\Z$, let $\lll F\rr_m\subset V_m$ be the space of degree $m$ multiples of~$F$.  

 Fix two integers $1\leq d<e$.  Let $\wDelta_{d,e}\subset P_{d}\times P_{e}$ be the subvariety parameterizing those $(\lll F\rr, \lll G\rr)$ such that $F$ and $G$ are not coprime (\ie such that~$F$ and $G$ have a common zero on $\P^1_k$). This subvariety is the zero locus of the classical \textit{resultant polynomial}~$R_{d,e}\in H^0(P_d\times P_e,\cO_{ P_d\times P_e}(e,d))$ (a polynomial in the coefficients of~$F$ and~$G$ vanishing exactly when $F$ and $G$ are not coprime). It follows that~$\wDelta_{d,e}$ is an ample divisor in $P_d\times P_e$. In particular, any projective curve in~$P_d\times P_e$ meets~$\wDelta_{d,e}$.

The purpose of this article is to show that, in spite of this, the resultant has strong negativity properties. It turns out that $P_d\times P_e$ is not the optimal parameter space to consider, because the coprimality of a pair ${(\lll F\rr, \lll G\rr)\in P_d\times P_e}$ depends on~$G$ only up to a multiple of $F$. To take this into account, we introduce the projective bundle~$P_{d,e}\to P_d$  whose fiber over $\lll F\rr$ is the projectivization $\P(V_e/\lll F\rr_e)$ of the space of degree $e$ polynomials up to a multiple of $F$. The point of $P_{d,e}$ associated with a nonzero $G\in V_e/\lll F\rr_e$ will be denoted by~$[F,G]$. The divisor~$\wDelta_{d,e}$ induces a divisor~$\Delta_{d,e}\subset P_{d,e}$, which we call the \textit{resultant divisor}. Set~$U_{d,e}:=P_{d,e}\setminus \Delta_{d,e}$.

The most striking consequence of our results is the following theorem:

\begin{thm}[Corollary \ref{corcompactification}]
\label{thmain}
The variety $U_{d,e}$ admits a projective compactification $X_{d,e}$ whose boundary $X_{d,e}\setminus U_{d,e}$ has codimension $2$ in $X_{d,e}$.
\end{thm}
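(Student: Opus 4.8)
The plan is to realise $U_{d,e}$ as the complement of the resultant divisor inside the projective bundle $P_{d,e}=\P(\cE)$, and to obtain $X_{d,e}$ as the output of an MMP that contracts $\Delta_{d,e}$. First I record the geometry. The multiplication map gives an exact sequence $0\to V_{e-d}\otimes\cO_{P_d}(-1)\to V_e\otimes\cO_{P_d}\to\cE\to0$ with $\cE$ of rank $d$, so $\pi\colon P_{d,e}=\P(\cE)\to P_d=\P^d$ has Picard rank two and $N^1(P_{d,e})_\R=\R\xi\oplus\R h$, where $\xi=c_1(\cO_{\P(\cE)}(1))$ and $h=\pi^*\cO_{\P^d}(1)$. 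Since the resultant is insensitive to adding a multiple of $F$ to $G$, $\Delta_{d,e}$ is a genuine effective divisor; restricting to a fibre $\P^{d-1}$ it becomes the union of the $d$ coordinate hyperplanes $\{G(\alpha_i)=0\}$, and a computation with a suitable test section gives $[\Delta_{d,e}]=d\,\xi+e\,h$ and $K_{P_{d,e}}=-d\,\xi-(e+2)\,h$, hence the suggestive identity $K_{P_{d,e}}+\Delta_{d,e}=-2h=\pi^*\cO_{\P^d}(-2)$. In particular $(P_{d,e},\Delta_{d,e})$ restricts on each fibre to the log Calabi--Yau pair $(\P^{d-1},\text{toric boundary})$, and $U_{d,e}$ is a torus bundle over $P_d$.

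With this in hand the statement reduces to producing a projective birational map $P_{d,e}\dashrightarrow X_{d,e}$ that is an isomorphism over $U_{d,e}$ and contracts $\Delta_{d,e}$ onto a subset of codimension $\geq 2$. The engine is the main theorem: running the MMP on $P_{d,e}$ so as to make $\Delta_{d,e}$ negative (equivalently, the $D$-MMP for $D$ in the chamber of $\Eff(P_{d,e})$ adjacent to the edge spanned by $[\Delta_{d,e}]$) terminates with a projective model $X_{d,e}$ on which the strict transform of $\Delta_{d,e}$ is removed by a divisorial contraction with image of codimension $\geq 2$. Because $\rho(P_{d,e})=2$, the cones $\Nef\subset\Mov\subset\Eff$ are two-dimensional and the MMP is a finite sequence of wall-crossings between $\Nef(P_{d,e})$ and the $[\Delta_{d,e}]$-edge of $\Eff(P_{d,e})$; to make this rigorous I would first show $P_{d,e}$ is a Mori dream space, i.e. that its Cox ring is finitely generated. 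I expect to obtain this from the explicit presentation of $\cE$ by the multiplication map together with the $\SL_2$-action on $V_1$ (the resultant being the classical $\SL_2$-semiinvariant), reducing finite generation to the invariant theory of binary forms.

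Granting the MMP, the statement follows. Each step is an isomorphism away from the strict transforms of $\Delta_{d,e}$: the curves that are flipped or contracted are precisely those on which the relevant divisor is negative, and these sweep out only $\Delta_{d,e}$, so $U_{d,e}$ remains in the nef/semistable locus throughout and maps isomorphically onto an open subset $U'\subset X_{d,e}$. The boundary $X_{d,e}\setminus U'$ is then the union of the image of $\Delta_{d,e}$ under the terminal divisorial contraction, which has codimension $\geq 2$ by definition, together with the (already codimension $\geq 2$) flipping loci; hence it has codimension $\geq 2$. As $X_{d,e}$ is projective and irreducible, it is the desired compactification.

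The main obstacle lies entirely in the MMP analysis, and it is genuinely necessary to go beyond a single classical contraction. Indeed, the natural map $[F,G]\mapsto(F,G)_e$ sending a pair to the degree-$e$ part of the ideal it generates, into the Grassmannian $\Grass(e-d+2,V_e)$, contracts $\Delta_{d,e}$ only when $d=2$: for a noncoprime pair one has $(F,G)=(x-\alpha)\,(F_1,G_1)$ with $\deg F_1=d-1$, so the image of $\Delta_{d,e}$ is birational to $\P^1\times P_{d-1,e-1}$ and already has dimension $2d-2=\dim\Delta_{d,e}$ once $d\geq 3$. Thus flips are unavoidable, and the hard part is to pin down the chamber decomposition precisely enough to verify simultaneously that no divisor other than $\Delta_{d,e}$ is contracted, that every flipping and contracting locus is disjoint from $U_{d,e}$, and that the terminal contraction of $\Delta_{d,e}$ is divisorial with image of codimension $\geq 2$ rather than, say, a Mori fibre space. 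Establishing the Mori dream space property and computing these walls explicitly is where the real work lies.
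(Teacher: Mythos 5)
Your reduction of the statement to the MMP for $P_{d,e}$ is the same high-level strategy as the paper's, and your final paragraph correctly locates where the difficulty sits. But that difficulty is precisely what is not addressed, so the proposal has a genuine gap rather than a proof. The one concrete route you offer for the Mori dream space property --- finite generation of $\Cox(P_{d,e})$ via ``the invariant theory of binary forms'' and the $\SL_2$-action --- does not work: the Cox ring of $P_{d,e}$ is identified (Theorem \ref{thmCox} and its proof) with the ring of invariants of the \emph{additive} group $V_{e-d}$ acting on $V_d\times V_e$ by $H\cdot(F,G)=(F,G+HF)$. This is a non-reductive action, for which no general finite-generation theorem exists (cf.\ Nagata's counterexample to Hilbert's 14th problem), and the paper explicitly states it was unable to prove finite generation directly; the logical order is reversed, with finite generation \emph{deduced} from the MMP. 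The MMP itself is established not by abstract criteria but by explicit construction: the iterated blow-up $\whP_{d,e}$, its identification with the multigraded Hilbert scheme $\Hilb_{d,e}$, the semi-ample line bundles $\cL_{d,e}^i$ pulled back from Grassmannians of spaces of degree-$(e+i-1)$ equations, and the resulting contractions $\oP^{\,i}_{d,e}$ and $P^{\,i}_{d,e}$. None of this machinery, or a substitute for it, appears in the proposal.

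Even granting the Mori dream space property, two further claims you assert without argument are exactly what the paper's Proposition \ref{strates} is for: that every flipping locus and the final contracted divisor are disjoint from $U_{d,e}$ (true because the flipped loci are the $W_{d,e}^i\subset W_{d,e}^{d-1}=\Delta_{d,e}$, but this requires identifying them), and that the last step is a \emph{divisorial contraction} of $\Delta_{d,e}$ with image of codimension $\geq 2$ rather than a fibration. The paper gets the codimension-$2$ statement from the explicit stratification of $\oP^{\,d-1}_{d,e}$ with strata $U_{d-j,e+j}$ of codimension $2j$, proved via immersivity of the Grassmannian maps $\eta_d^l$ and $\theta_{d,e}^l$. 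Two smaller slips: $K_{P_{d,e}}+\Delta_{d,e}=(2e-2d)h$, not $-2h$ (so the pair is not log Fano, consistent with the paper's remark that \cite[Corollary 1.3.2]{BCHM} is unavailable here), and $U_{d,e}$ is a torus bundle only over the locus of square-free $F$.
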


Taking appropriate linear sections of $X_{d,e}$, we deduce the following.

\begin{thm}[Theorem \ref{completecurves}]
\label{corcurves}
The variety $U_{d,e}$ is covered by projective curves.
\end{thm}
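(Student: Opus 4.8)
The plan is to deduce the statement directly from the compactification produced in Theorem \ref{thmain}. Write $X:=X_{d,e}$ for that compactification and set $Z:=X\setminus U_{d,e}$, so that by Theorem \ref{thmain} the closed subset $Z$ has codimension at least $2$ in $X$; let $n:=\dim X$. Since $U_{d,e}$ is a nonempty open subset of the irreducible projective bundle $P_{d,e}$, it is irreducible, and being dense in $X$ it forces $X$ to be irreducible of dimension $n$. We may assume $d\geq 2$, for if $d=1$ then $P_{d,e}\to P_d$ has $0$-dimensional fibers, $X$ is itself a projective curve, $Z$ (of codimension $2$) is empty, and $U_{d,e}=X$ is trivially covered by itself. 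Fix now a point $p\in U_{d,e}$; the goal is to exhibit a projective curve $C$ with $p\in C\subseteq U_{d,e}$.

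First I would choose a projective embedding $X\hookrightarrow\P^N$ coming from a very ample line bundle, and let $W\subset(\P^N)^{\vee}$ be the hyperplanes through $p$, which form a $\P^{N-1}$. I would then cut $X$ by $n-1$ hyperplanes $H_1,\dots,H_{n-1}$ chosen generically in $W$ and set $C:=X\cap H_1\cap\dots\cap H_{n-1}$. Applying a dimension-drop argument (Bertini) repeatedly, and using that at each stage the current section is irreducible of dimension $\geq 1$, hence is not contained in a general hyperplane through $p$, one sees that for general $(H_1,\dots,H_{n-1})\in W^{n-1}$ the intersection $C$ is $1$-dimensional and contains $p$; replacing $C$ by its component through $p$ if necessary, we may take $C$ to be an irreducible projective curve with $p\in C$. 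It remains to arrange that $C\cap Z=\emptyset$.

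The crux is a dimension count on an incidence correspondence, and this is where the codimension-$2$ hypothesis is used decisively. Since $p\notin Z$, every $z\in Z$ satisfies $z\neq p$, so the hyperplanes of $W$ containing $z$ form a hyperplane $W_z\cong\P^{N-2}$ of $W$. Consider
$$I:=\{(z,H_1,\dots,H_{n-1})\in Z\times W^{n-1}\ :\ z\in H_i\text{ for all }i\}.$$
The projection $I\to Z$ has fibers $(W_z)^{n-1}\cong(\P^{N-2})^{n-1}$, whence $\dim I\leq(n-2)+(n-1)(N-2)$, which is strictly smaller than $\dim W^{n-1}=(n-1)(N-1)$ (the difference is $(n-1)-(n-2)=1$). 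Thus the projection $I\to W^{n-1}$ is not dominant, and for $(H_1,\dots,H_{n-1})$ outside a proper closed subset of $W^{n-1}$ we obtain $Z\cap H_1\cap\dots\cap H_{n-1}=\emptyset$, i.e. $C\cap Z=\emptyset$. Intersecting the two dense-open conditions in $W^{n-1}$, a general choice of the $H_i$ yields a projective curve $C$ with $p\in C\subseteq X\setminus Z=U_{d,e}$; since $p\in U_{d,e}$ was arbitrary, this covers $U_{d,e}$ by projective curves.

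I expect the only genuine subtlety to be this simultaneous control: forcing the linear section to pass through the prescribed point $p$ while keeping it away from the boundary $Z$. The count above shows there is exactly enough room, and the inequality $\dim I<\dim W^{n-1}$ holds precisely because $Z$ has codimension $2$ rather than $1$ — which is why the codimension-$2$ statement of Theorem \ref{thmain} is the essential input here, and why the ampleness of the resultant on $P_d\times P_e$ (forcing every projective curve there to meet $\wDelta_{d,e}$) is not an obstruction on the better space $P_{d,e}$.
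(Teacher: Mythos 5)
Your argument is correct and is essentially the paper's own proof: the paper likewise takes the codimension-$2$ compactification from Corollary \ref{corcompactification} and passes a general one-dimensional linear section through the given point, the dimension count you spell out via the incidence correspondence being exactly what makes ``general linear section avoids the boundary'' work. You have simply made explicit the Bertini-type details that the paper leaves implicit.
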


We do not know how to construct projective curves avoiding the resultant divisor~$\Delta_{d,e}$ directly (except in particular cases, \eg when $e=d+1$, see Remark~\ref{explicurve}). Let us emphasize the concrete meaning of Theorem \ref{corcurves}. It states that one can find many one-parameter algebraic families of pairs $[F,G]$ of polynomials of degrees $d$ and $e$, such that $F$ and~$G$ do not acquire a common root under any degeneration.

\subsection{Running the MMP}

Let us now explain the strategy of our proof of Theorem~\ref{thmain}.
To construct $X_{d,e}$ from~$P_{d,e}$, we need to somehow contract the resultant divisor $\Delta_{d,e}$. However, the birational map  $P_{d,e}\dashrightarrow X_{d,e}$ is not a morphism in general: the divisor $\Delta_{d,e}$ can only be contracted after an appropriate birational modification of $X_{d,e}$. The best one can hope for is to perform a series of flips (birational modifications in codimension~$\geq 2$) on $P_{d,e}$, after which it may become possible to contract the strict transform of $\Delta_{d,e}$. 

Such a sequence of flips should be the outcome of running the MMP on $P_{d,e}$. As~$P_{d,e}$ has Picard rank $2$ (except when $d=1$, see~\S\ref{parPicard}), there are two directions in which one could run the MMP (this is a so-called $2$-ray game, see \eg \hbox{\cite[\S2.2]{Corti}} or \cite[\S2.F]{BLZ}). One of the two directions yields the projection~\mbox{$P_{d,e}\to P_d$}, and it is the other direction that could eventually contract the resultant divisor.

The MMP cannot be applied to all smooth projective varieties. In our situation, it is possible to run it successfully on $P_{d,e}$ if and only if $P_{d,e}$ is a Mori dream space in the sense of \cite[Definition 1.10]{MDS} (see \cite[Proposition 1.11]{MDS}). Our main theorem states that this is the case, gives a full description of the loci flipped by the MMP applied to $P_{d,e}$, and shows that the divisor $\Delta_{d,e}$ eventually gets contracted.

\begin{thm}[Theorem \ref{thMMPmain}]
\label{thMDS}
The following assertions hold.
\begin{enumerate}[label=(\roman*)] 
\item
\label{i}
The variety $P_{d,e}$ is a Mori dream space.
\item
\label{ii}
The effective cone $\Eff(P_{d,e})$ of $P_{d,e}$ is generated by the pullback $\cO_{P_{d,e}}(1,0)$ of $\cO_{P_d}(1)$ to $P_{d,e}$ and by the line bundle $\cO_{P_{d,e}}(\Delta_{d,e})$. 
\item
\label{iii}
The MMP for $P_{d,e}$ flips $W_{d,e}^i:=\{[F,G]\in P_{d,e}\mid\deg(\gcd(F,G))\geq d-i\}$ for~$1\leq i\leq d-2$ and eventually contracts $W_{d,e}^{d-1}=\Delta_{d,e}$.
\item
\label{iv}
 The output of the MMP for $P_{d,e}$ is a projective compactification $X_{d,e}$ of~$U_{d,e}$, whose boundary has codimension $2$, and that admits a stratification whose strata are isomorphic to $(U_{d-j,e+j})_{0\leq j<d}$.
\end{enumerate}
\end{thm}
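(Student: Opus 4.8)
The plan is to obtain the four assertions as the bookkeeping of one explicitly analyzed two-ray game on $P_{d,e}$. I first record the projective bundle structure $\pi\colon P_{d,e}=\P(\cE)\to P_d$, where $\cE$ is the rank-$d$ quotient bundle $\mathrm{coker}\big(\cO_{P_d}(-1)\otimes V_{e-d}\to V_e\otimes\cO_{P_d}\big)$ (the map being multiplication $F\cdot Q$), with $\det\cE=\cO_{P_d}(e-d+1)$. For $d\ge 2$ this gives $\Pic(P_{d,e})=\Z h\oplus\Z\xi$ with $h=\pi^*\cO_{P_d}(1)$ and $\xi$ the relative hyperplane class, and an Euler-sequence computation expresses $K_{P_{d,e}}$ and the class $[\Delta_{d,e}]$ (the latter forced by $\wDelta_{d,e}\in|\cO(e,d)|$) as explicit integral combinations of $h$ and $\xi$; in particular the fibers of $\pi$ are $K$-negative, so $\pi$ is one of the two extremal contractions and $h$ spans one edge of $\Eff(P_{d,e})$. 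Assertion (ii) then follows once the opposite end of the game is identified, the terminal divisorial contraction of $\Delta_{d,e}$ exhibiting $[\Delta_{d,e}]$ as the other edge.

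The heart of the proof is to run the $K$-MMP in the direction opposite to $\pi$ and to identify each step. The organizing observation is that the stratification $W_{d,e}^i=\{\deg\gcd(F,G)\ge d-i\}$ is a determinantal stratification: it coincides with the degeneracy loci of the B\'ezout form of $(F,G)$, a symmetric matrix of bilinear forms whose corank equals $\deg\gcd(F,G)$, so that $W_{d,e}^i=\{\mathrm{corank}\ge d-i\}$ has codimension $d-i$ in $P_{d,e}$. Such a locus carries its two standard kernel-bundle resolutions, and the wall-crossing between them is a flip; concretely, a point of the open stratum $\{\deg\gcd=j\}$ records a degree-$j$ common factor $H\in\P(V_j)$ together with the residual coprime pair $(F/H,G/H)$, and crossing the wall trades the $\P(V_j)$ of common factors for a dual projective space. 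I would then run the game from the deepest stratum outward, flipping $W_{d,e}^1,\dots,W_{d,e}^{d-2}$ in turn, checking at each step that the modified space is projective and $\Q$-factorial and that the map is a genuine flip (a small contraction composed with the inverse of another), so that after $d-2$ steps the strict transform of $\Delta_{d,e}$ becomes contractible and its divisorial contraction produces $X_{d,e}$.

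The step I expect to be the main obstacle is the precise identification of the flipped loci, and in particular the degree shift that turns the naive residual space $U_{d-j,e-j}$ into the claimed $U_{d-j,e+j}$. Dividing out the common factor realizes the stratum $\{\deg\gcd=j\}$ as a $\P(V_j)$-bundle of copies of $U_{d-j,e-j}$; the content of (iv) is that the flip collapses the $\P(V_j)$ direction while simultaneously changing the polarization, so that the resulting stratum of $X_{d,e}$ is $U_{d-j,e+j}$, of codimension $2j$. Pinning this down requires a local model for each flip, computing the normal data of the determinantal stratum and tracking the transform of $\cO_{\P(\cE)}(1)$ across the wall; I would organize this by the induction $(d,e)\rightsquigarrow(d-1,e+1)$ suggested by the strata themselves, since the boundary strata of $X_{d,e}$ are exactly the strata of $X_{d-1,e+1}$.

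Finally, the explicit construction of all the intermediate spaces exhibits them as finitely many projective $\Q$-factorial small modifications of $P_{d,e}$ whose nef cones tile $\Mov(P_{d,e})$, with every nef class semiample throughout; by the Hu--Keel criterion this proves that $P_{d,e}$ is a Mori dream space, giving (i). The recorded list of flipped and contracted loci is then exactly (iii), and the stratification of $X_{d,e}$ read off from the game is (iv), with the codimension-$2$ bound on the boundary coming from the largest stratum $U_{d-1,e+1}$ (the case $j=1$).
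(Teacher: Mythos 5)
Your outline has the right overall shape (build explicit birational models, identify the flipped loci with the $W_{d,e}^i$, contract $\Delta_{d,e}$ at the end, and only then invoke the Hu--Keel criterion), and your Picard-rank-$2$ setup and the two-ray-game framing agree with the paper. But the proposal defers exactly the steps that constitute the proof, and the mechanism you propose for producing them would not work. The flips cannot come from ``running the $K$-MMP in the direction opposite to $\pi$'': as the paper points out in the introduction, the small contractions to be flipped contract $K$-\emph{positive} curves, so no general existence theorem for flips applies and each intermediate model must be constructed by hand, together with a proof that the relevant line bundles are semi-ample. Your substitute --- viewing $W_{d,e}^i$ as a corank stratification of the B\'ezout form and invoking ``the two standard kernel-bundle resolutions'' with a wall-crossing flip between them --- does not apply here: $W_{d,e}^i$ has codimension $d-i$, which is far from the expected codimension of a corank-$(d-i)$ locus of a symmetric (or even generic) matrix of forms, so these are highly non-generic degeneracy loci and the standard determinantal flip package gives nothing. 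The paper instead builds a single resolution $\whP_{d,e}$ by iteratively blowing up the strict transforms of the $W_{d,e}^u$ (with a delicate inductive analysis of when the multiplication maps become immersive), identifies $\whP_{d,e}$ with a multigraded Hilbert scheme, and obtains the semi-ample classes $\cL_{d,e}^i$ spanning $\Nef(\whP_{d,e})$ as pullbacks of Pl\"ucker bundles under the maps $Z\mapsto(\text{degree-}l\text{ equations of }Z)$ to Grassmannians; all the models $\oP^{\,i}_{d,e}$, $P^{\,i}_{d,e}$ are then contractions of $\whP_{d,e}$ along faces of this simplicial cone. None of this is supplied or replaced by your argument.

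There is also a concrete error in your description of the boundary strata in (iv). You assert that the flip ``collapses the $\P(V_j)$ direction'' of common factors and re-polarizes the residual pair $(F/H,G/H)$ from degrees $(d-j,e-j)$ to $(d-j,e+j)$. The geometry is the opposite: on the exceptional divisor $\whE_{d,e}^u\simeq\whP_{d-u,e+u}\times\whP_{u,e-d+u}$, writing $[F,G]=[AB,AC]$ with $A=\gcd$ of degree $d-u$ and residual pair $[B,C]$ of degrees $(u,e-d+u)$, the contraction to $\oP^{\,d-1}_{d,e}$ collapses the \emph{residual-pair} factor $\whP_{u,e-d+u}$ and retains the factor $\whP_{d-u,e+u}$, whose points are $[A,BG-CF]$ --- the common factor together with the degree-$(e+u)$ ``normal direction'' polynomial $BG-CF$ produced by the limit computation in Proposition \ref{blowuprec} \ref{tiii}. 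In particular the stratum with $\deg\gcd=g$ contributes $U_{g,\,e+d-g}$, not $U_{d-g,\,e+g}$; your two uses of the index $j$ (as $\deg\gcd$ in the middle paragraph and as half the codimension in the last) are inconsistent, and no re-polarization of the residual pair occurs. Pinning down this identification is precisely what requires the paper's Propositions \ref{imm1}, \ref{imm2} and \ref{strates}, so this is not a cosmetic slip but a sign that the key computation is missing.
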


The low dimension and inductive structure of the boundary of $X_{d,e}$ is reminiscent of Baily--Borel compactifications of hermitian locally symmetric spaces.

\subsection{Constructing birational models}
\label{parintrobir}

We take a concrete approach to Theorem~\ref{thmain}, giving direct constructions of the varieties appearing in the MMP for~$P_{d,e}$. 

In Section \ref{secblowup}, we introduce the variety $\whP_{d,e}$ obtained from~$P_{d,e}$ by blowing up successively the strict transforms of the $(W_{d,e}^u)_{1\leq u<d}$. The strict transform ${\whE_{d,e}^u\subset \whP_{d,e}}$ of the exceptional divisor of the $u$-th blow-up is isomorphic to $\whP_{d-u,e+u}\times\whP_{u,e-d+u}$ (see Theorem \ref{blowup} \ref{bii} and Remark \ref{blowupremarks} (i)). This surprising structural result for $\whE_{d,e}^u$ allows for arguments by induction on $d$ that are at the heart of our proofs.

Associating with $[F,G]\in U_{d,e}$ the~$\G_m$\nobreakdash-invariant subscheme~$\{F=G=0\}$ of $\A^2_k$ yields a rational map~$\rho_{d,e}:\whP_{d,e}\dashrightarrow \Hilb_{d,e}$ to a multigraded Hilbert scheme~$\Hilb_{d,e}$ in the sense of~\cite{multigraded} (see \S\ref{parmultigraded}). In Section \ref{secmultigraded}, we study the variety~$\Hilb_{d,e}$ in detail. For each~$l\geq 0$, sending~$Z\in\Hilb_{d,e}$ to the space of degree $l$ equations of $Z$ yields a morphism from~$\Hilb_{d,e}$ to a Grassmannian, hence to a semi-ample bundle on~$\Hilb_{d,e}$ which we compute (see Proposition \ref{Licalcul}). Using these semi-ample line bundles and leveraging our excellent understanding of the geometry of~$\whP_{d,e}$, we deduce that the rational map $\rho_{d,e}:\whP_{d,e}\dashrightarrow \Hilb_{d,e}$ is an isomorphism (see Theorem \ref{thHilb}).

The pull-backs by $\rho_{d,e}$ of our semi-ample line bundles give rise to birational models of $P_{d,e}$ constructed as contractions of $\whP_{d,e}$. In Section \ref{secMMP}, we study these birational models and we show that they realize the MMP for $P_{d,e}$ (see \eqref{MMPdiag}). This leads to a proof of Theorem \ref{thMDS}.


\subsection{The Mori dream space property and invariant theory}
\label{parintroMDS}

More conceptual criteria exist for determining whether a given smooth projective variety is a Mori dream space. We have been unable to use them to establish Theorem~\ref{thMDS}~\ref{i}. 

First, any log Fano variety (say, over $k=\C$) is a Mori dream space by \cite[Corollary 1.3.2]{BCHM}.
We have been unable to use such a criterion to prove Theorem~\ref{thMDS}~\ref{i}. This is related to the fact that we need to run the MMP backwards: the small contractions to be flipped contract $K$-positive instead of $K$-negative curves.

Second, a smooth projective variety $X$ (say, with torsion-free Picard group) is a Mori dream space
if and only if its Cox ring $\Cox(X):=\bigoplus_{\cL\in\Pic(X)} H^0(X,\cL)$ is finitely generated (see \cite[Proposition 2.9]{MDS}). The Cox ring of~$P_{d,e}$ is easy to describe. Recall that $V_l=H^0(\P^1_k,\cO_{\P^1_k}(l))$ for $l\geq 0$. Let the additive group~$V_{e-d}$ act on the affine algebraic variety~$V_d\times V_e$ by~$H\cdot(F,G)=(F,G+HF)$. At least when~${d>1}$ (see Remark \ref{remCox}), the natural rational map~${V_d\times V_e\dashrightarrow P_{d,e}}$ identifies~$\Cox(P_{d,e})$ with the ring of invariants~${\cO(V_d\times V_e)^{V_{e-d}}}$. This gives a reformulation of Theorem~\ref{thMDS}~\ref{i} in the framework of Hilbert's $14$th problem. 

We have been unable to use this criterion to prove Theorem~\ref{thMDS}~\ref{i}. Instead, we deduce from Theorem~\ref{thMDS}~\ref{i} the following result pertaining to invariant theory.

\begin{thm}[Theorem \ref{thmCox}]
\label{thCox}
The $k$-algebra of invariants of the action of $V_{e-d}$ on~${V_d\times V_e}$ given by $H\cdot(F,G)=(F,G+HF)$ is finitely generated.
\end{thm}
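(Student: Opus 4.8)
The plan is to deduce the finite generation from the Mori dream space property established in Theorem~\ref{thMDS}~\ref{i}, using the description of the Cox ring of $P_{d,e}$ as exactly the ring of invariants appearing in the statement. The essential content is thereby packaged into Theorem~\ref{thMDS}~\ref{i}, and what remains is a formal reduction, together with the separate treatment of one degenerate case.

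First I would treat the case $d>1$. By Remark~\ref{remCox}, the natural rational map $V_d\times V_e\dashrightarrow P_{d,e}$ identifies $\Cox(P_{d,e})$ with $\cO(V_d\times V_e)^{V_{e-d}}$ as graded rings, the grading on the right being by the bidegree in $(F,G)$ and matching the $\Pic(P_{d,e})$-grading on the left. Since $P_{d,e}$ is smooth and projective (a projective bundle over $P_d\cong\P^d$), its Picard group $\Pic(P_{d,e})\cong\Z^2$ is torsion-free, so the criterion \cite[Proposition 2.9]{MDS} applies: being a Mori dream space, $P_{d,e}$ has finitely generated Cox ring, and hence $\cO(V_d\times V_e)^{V_{e-d}}$ is finitely generated.

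It then remains to handle $d=1$ separately, since in that case the fibers $\P(V_e/\lll F\rr_e)$ are single points, $P_{1,e}\cong\P^1$, and the identification above degenerates (the invariant ring has one more generator than $\Cox(\P^1)$). Here I would argue by hand. Writing $F=a_0x+a_1y$ and $G=\sum_{i=0}^eb_ix^{e-i}y^i$, the form $F$ vanishes at $[a_1:-a_0]$, so $R_{1,e}(F,G)=G(a_1,-a_0)=\sum_{i=0}^e(-1)^ia_0^ia_1^{e-i}b_i$ is $V_{e-1}$-invariant, because $(G+HF)(a_1,-a_0)=G(a_1,-a_0)$. I expect that $\cO(V_1\times V_e)^{V_{e-1}}=k[a_0,a_1,R_{1,e}]$, a polynomial ring, hence finitely generated: for $F\neq 0$ the orbit $G+\lll F\rr_e$ is the hyperplane of $V_e$ on which $R_{1,e}$ is constant, so the generic fibers of $(a_0,a_1,R_{1,e})\colon V_1\times V_e\to\A^3$ coincide with the generic $V_{e-1}$-orbits; Rosenlicht's theorem then gives $k(V_1\times V_e)^{V_{e-1}}=k(a_0,a_1,R_{1,e})$, and a short induction on the degree in the $b_i$ identifies the invariant polynomials with $k[a_0,a_1,R_{1,e}]$.

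Once the Mori dream space property is granted, the case $d>1$ is thus a formal consequence of the Cox-ring criterion together with the identification of Remark~\ref{remCox}, and I do not anticipate any difficulty there. The only real subtlety in this particular deduction is the degenerate case $d=1$, where the bridge between the invariant ring and the Cox ring breaks down and one must compute the invariant ring directly; that computation, though elementary, is where I would expect to have to take the most care.
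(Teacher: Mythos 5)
Your overall strategy for $d>1$ is exactly the paper's: identify $\cO(V_d\times V_e)^{V_{e-d}}$ with $\Cox(P_{d,e})$ and then invoke \cite[Proposition 2.9]{MDS} together with the Mori dream space property. However, there is a gap at the one place where the paper's proof actually does work: you cite Remark \ref{remCox} for the identification $\Cox(P_{d,e})\simeq\cO(V_d\times V_e)^{V_{e-d}}$, but that remark only concerns the degenerate case $d=1$ and does not establish the identification; the introduction merely asserts it and defers the proof to the proof of Theorem \ref{thmCox} itself. The paper's argument is a three-step chain that you would need to reproduce: (a) restriction from $V_d\times V_e$ to $V_d^*\times V_e$ is an isomorphism on functions since the removed locus has codimension $\geq 2$, and $V_d^*\times V_e\to V_{d,e}$ (the total space of the bundle with fiber $V_e/\lll F\rr_e$) is a Zariski-locally trivial $V_{e-d}$-torsor, so the invariant ring is $\cO(V_{d,e})$; (b) for $d>1$ the zero section has codimension $\geq 2$, so $\cO(V_{d,e})\simeq\cO(V_{d,e}^*)$; (c) $V_{d,e}^*\to P_{d,e}$ is a free $\G_m^2$-quotient, identifying $\cO(V_{d,e}^*)$ with $\bigoplus_{l,m}H^0(P_{d,e},\cO_{P_{d,e}}(l,m))=\Cox(P_{d,e})$. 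None of these steps is deep, but together they constitute most of the proof, so they cannot be absorbed into a citation that does not contain them.

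For $d=1$ your route genuinely differs from the paper's. The paper observes that $V_{1,e}$ is a line bundle over $V_1^*=\A^2_k\setminus\{0\}$ which extends to a (necessarily trivial) line bundle over $\A^2_k$, whence $\cO(V_{1,e})\simeq\cO(V_1)[t]$. You instead exhibit the explicit invariant $R_{1,e}(F,G)=G(a_1,-a_0)$ and claim $\cO(V_1\times V_e)^{V_{e-1}}=k[a_0,a_1,R_{1,e}]$ via Rosenlicht plus an induction. The statement is correct and consistent with Remark \ref{remCox} (and $R_{1,e}$ is precisely a trivializing coordinate for the paper's line bundle), but the passage from equality of fraction fields to equality of polynomial rings is the nontrivial point and is only sketched; the paper's geometric argument avoids it entirely and is the cleaner of the two.
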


Algebras of invariants of reductive group actions are always finitely generated. Theorem \ref{thCox} features an additive group action, for which no general result is available (see Nagata's counterexample to Hilbert's $14$th problem \cite{Nagata}).

\subsection{A comparison with \texorpdfstring{$(\P^2_k)^{[n]}$}{P2[n]}}

We briefly compare the birational geometry of~$P_{d,e}$ and of the Hilbert scheme $(\P^2_k)^{[n]}$ of length $n$ subschemes of~$\P^2_k$ (when $k=\C$). These varieties are related because the modification~$\whP_{d,e}$ of $P_{d,e}$ introduced in \S\ref{parintrobir} can be realized as a closed subvariety of~$(\P^2_k)^{[de]}$ (see Theorem~\ref{thHilb} and Remark~\ref{remHilb}).

Unlike $P_{d,e}$ in general, the variety $(\P^2_k)^{[n]}$ is always log Fano (see \cite[Theorem 2.5]{ABCH}).
It therefore follows from \cite[Corollary 1.3.2]{BCHM} that $(\P^2_k)^{[n]}$ is a Mori dream space. As explained in \S\ref{parintroMDS}, we are not aware of such a direct argument to prove Theorem \ref{thMDS} \ref{i} according to which $P_{d,e}$ is a Mori dream space.

As $(\P^2_k)^{[n]}$ has Picard rank $2$ by \cite{Fogarty2}, one can run its MMP in two directions. One of the two directions yields a divisorial contraction: the Hilbert--Chow morphism \cite[p.\,516]{Fogarty1}. It is the other direction that is interesting (it is described in great detail in \cite[\S10]{ABCH} when $n\leq 9$, see \cite[Theorem 1.2]{LZ} for general $n$).

Finally, the nontrivial boundary of $\Eff(P_{d,e})$ is generated by the resultant divisor~$\Delta_{d,e}$  (see Theorem \ref{thMDS} \ref{ii}). In contrast, the nontrivial boundary of $\Eff((\P^2_k)^{[n]})$ is difficult to describe and depends on $n$ in a complicated fashion (see~\cite[Theorem~1.4 and Table 1 p.\,59]{Huizenga}).

\subsection{The higher-dimensional case}
\label{parintrohigher}

The questions that are solved in this article for homogeneous polynomials in two variables are also interesting in more variables.

Fix $n\geq 1$ and $1\leq d<e$. Set $P^{(n)}_d:=\P(H^0(\P^n_k,\cO_{\P^n_k}(d)))$. Let ${P^{(n)}_{d,e}\to P^{(n)}_{d}}$ be the projective bundle whose fiber over
$\lll F\rr$ is~$\P(H^0(\P^n_k,\cO(e))/\lll F\rr_e)$. As above, we use the notation $[F,G]$ for points of $P^{(n)}_{d,e}$. We let $\Delta^{(n)}_{d,e}\subset P^{(n)}_{d,e}$ be the \textit{discriminant divisor} parameterizing those $[F,G]\in P^{(n)}_{d,e}$ such that $\{F=G=0\}$ is not a smooth complete intersection of codimension $2$ in $\P^n_k$.
Define $U^{(n)}_{d,e}:=P^{(n)}_{d,e}\setminus \Delta^{(n)}_{d,e}$.

When $n=1$, the discriminant divisor is exactly the resultant divisor. It therefore makes sense to ask if our main results admit generalizations for arbitrary $n\geq 1$.

\begin{quest}
\label{q1}
Is some Zariski-open subset~${V\subset U^{(n)}_{d,e}}$ covered by projective curves? 
\end{quest}

\begin{quest}
\label{q3}
Is the variety $P^{(n)}_{d,e}$ a Mori dream space?
\end{quest}

\begin{quest}
\label{q4}
Is the divisor $\Delta^{(n)}_{d,e}$ on the boundary of the effective cone of $P^{(n)}_{d,e}$?
\end{quest}

A more optimistic version of Question \ref{q1} would require that~$V=U^{(n)}_{d,e}$. 

When $n=3$ and $(d,e)=(2,3)$, Questions \ref{q3} and \ref{q4} were first considered in~\cite[\S1.3]{CMJL2} in relation with the Hassett--Keel program in genus $4$.

In the direction of Question \ref{q4}, it is known that the divisor $\Delta^{(n)}_{d,e}$ is never ample (see \cite[Remarque~2.9]{qp}).

Positive answers to Questions \ref{q3} and \ref{q4} would give rise to a positive answer to Question \ref{q1}.
To see it, use Question \ref{q3} to run the MMP on~$P^{(n)}_{d,e}$. If the last step of this MMP is a divisorial contraction, then Question \ref{q4} implies that it contracts (the strict transform of) $\Delta_{d,e}^{(n)}$. Curves constructed as general linear sections of the resulting variety then answer Question \ref{q1}. If the last step of this MMP is a fibration, then Question \ref{q4} implies that this fibration is induced by the line bundle associated with (the strict transform of) $\Delta_{d,e}^{(n)}$. Curves constructed as linear sections in a general fiber of this fibration then answer Question \ref{q1}.

Questions \ref{q1} to \ref{q4} have positive answers when $n=1$ (see Theorem \ref{thMDS}) and when~${d=1}$ and $n\geq 2$ (the MMP for $P_{d,e}^{(n)}$ can then be realized as a variation of GIT in the sense of \cite{VGIT} for the diagonal action of $\SL_n$ on~$P^{(n-1)}_e\times\P(M_{n+1,n})$).

When $d\geq 2$, we expect that the last step of the MMP for $P^{(n)}_{d,e}$ (whose existence is predicted by Question \ref{q3}) is a divisorial contraction (contracting the discriminant divisor in view of Question \ref{q4}). This would answer positively the next question.

\begin{quest}
\label{q2}
Assume that $d\geq 2$.
Does some Zariski-open subset $V\subset U^{(n)}_{d,e}$ admit a projective compactification whose boundary has codimension $\geq 2$? 
\end{quest}

Again, one could even hope that choosing $V=U^{(n)}_{d,e}$ in Question \ref{q2} works.

When $n=3$, a positive answer to Question \ref{q2} would show the existence of nonisotrivial complete families of smooth curves in $\P^3_k$, thereby solving a classical problem (see \cite{ChangRan1, ChangRan2}, \cite[p.\,57]{HM} and \cite{Complete}).

\subsection{Other values of the degrees}

One can consider further extensions of these questions. Fix degrees~${1\leq d_1\leq\dots\leq d_c}$. Let $U^{(n)}_{d_1,\dots,d_c}$ be the variety parameterizing those~$(F_1,\dots,F_c)$ in $\bigoplus_{i=1}^c H^0(\P^n_k,\cO_{\P^n_k},\cO(d_i))$ such that $\{F_1=\dots=F_c=0\}$ is smooth of codimension $c$ in $\P^n_k$, where $F_i$ is well defined up to multiples of the~$F_j$ of degree $\leq d_i$ and up to scalar multiplication. (The variety $U^{(n)}_{d_1,\dots,d_c}$ can be realized as an open subset of a multigraded Hilbert scheme parameterizing $\G_m$-invariant subschemes of~$\A^{n+1}_k$, by adapting the construction explained in \S\ref{parintrobir}.)

When the $d_i$ are all equal with common value $d$ (\eg in the ${c=1}$ case), the variety $U^{(n)}_{d_1,\dots,d_c}$ is the complement of the discriminant locus in the Grassmannian~${\Grass\big(c, H^0(\P^n_k,\cO_{\P^n_k}(d))\big)}$. Unless $d=1$ and $c<n+1$, this locus is a nonempty divisor (see \cite[Remarque 1.1]{Degrees}), hence an ample divisor, so~$U^{(n)}_{d_1,\dots,d_c}$ is affine. In particular, the variety $U^{(n)}_{d_1,\dots,d_c}$ cannot contain projective curves. This contrasts with Theorem~\ref{corcurves}. As far as we know, whenever the degrees are not all equal, the variety $U^{(n)}_{d_1,\dots,d_c}$ might be covered by projective curves. 

The case $c=2$ and $d_1<d_2$  is particular because the variety $U^{(n)}_{d_1,d_2}$ admits a very simple compactification $P^{(n)}_{d_1,d_2}$ with which one can work. In general, we are not aware of the existence of such a concrete compactification.

\subsection{Acknowledgements}

Part of this work was done during a stay at University of Utah, where I benefited from excellent working
conditions. Particular thanks to Tommaso de Fernex for many discussions and his warm hospitality.

I thank Kristin DeVleming for a very interesting conversation on related topics.

\section{Generalities and notation}
\label{secgeneralities}

In this section, we introduce the variety $P_{d,e}$ which is our main object of study.

\subsection{Conventions}

We fix a field $k$. A \textit{variety} over $k$ is a separated scheme of finite type over $k$. A \textit{curve} is a variety of pure dimension~$1$~over~$k$.

\subsection{The variety \texorpdfstring{$P_{d,e}$}{Pde}}

Set $V_l:=H^0(\P^1_k,\cO_{\P^1_k}(l))$ for $l\in\Z$. Let $P_l:=\P(V_l)$ be its projectivization. If~$F\in V_l$ and $m\in\Z$, we let $\lll F\rr_m\subset V_m$ be the space of multiples of~$F$ of degree $m$. If $F$ is nonzero, we write $\lll F\rr$ instead of~$\lll F\rr_l$ for the line spanned by $F$, which we view as a point of $P_l$.

For $1\leq d<e$, the vector bundle $\cE_{d,e}$ over $P_d$ whose fiber over~$\lll F\rr$ is $V_e/\lll F\rr_e$ fits into a short exact sequence
\begin{equation}
\label{sesqp}
0\to V_{e-d}\otimes\cO_{P_{d}}(-1)\xrightarrow{\cdot F} V_e\otimes\cO_{P_{d}}\to\cE_{d,e}\to0
\end{equation}
(see \eg \cite[(1) p.\,1753]{qp}). Let $P_{d,e}\to P_{d}$ be the associated projective bundle.
We let $[F,G]\in P_{d,e}$ be the point associated with nonzero $F\in V_d$ and $G\in V_e/\lll F\rr_e$.

Let $\Delta_{d,e}\subset P_{d,e}$ be the subset of all $[F,G]\in P_{d,e}$ such that $F$ and $G$ are not coprime. It is an irreducible divisor, which is empty if and only if $d=1$.

\subsection{Line bundles on \texorpdfstring{$P_{d,e}$}{Pde}}
\label{parPicard}

The Picard group $\Pic(P_{d,e})$ of $P_{d,e}$ is generated by the pull-back $\cO_{P_{d,e}}(1,0)$ of the ample tautological bundle on $P_{d}$ and by the relatively ample tautological bundle~$\cO_{P_{d,e}}(0,1)$ of the projective bundle~$P_{d,e}\to P_{d}$. All line bundles on $P_{d,e}$ are therefore of the form~$\cO_{P_{d,e}}(l,m)$ for some $l,m\in\Z$.
  
  Unless $d=1$, the line bundles $\cO_{P_{d,e}}(1,0)$ and  $\cO_{P_{d,e}}(0,1)$ are linearly independent, and the smooth projective variety~$P_{d,e}$ has Picard rank $2$. 
  
If $d=1$, the morphism ${P_{1,e}\to P_{1}\simeq \P^1_k}$ is an isomorphism, 
so~$P_{1,e}\simeq \P^1_k$ has Picard rank $1$. To compute the relation between $\cO_{P_{1,e}}(1,0)$ and~$\cO_{P_{1,e}}(0,1)$ in this case, note that $\cO_{P_{1,e}}(0,-1)\simeq\cE_{1,e}$ and take the determinant of \eqref{sesqp} to show that
\begin{equation}
\label{coincidence}
\cO_{P_{1,e}}(0,-1)\simeq\cE_{1,e}\simeq\cO_{P_{1,e}}(e,0)\textrm{ \hspace{.3em}in\hspace{.3em} }\Pic(P_{1,e}).
\end{equation}

\subsection{Multiplication maps}

For $0\leq u\leq d<e$, we define the multiplication map
\begin{equation}
\label{defphi}
\mu_{d,e}^u:P_{d-u}\times P_{u,e-d+u}\to P_{d,e}
\end{equation}
by $\mu_{d,e}^u(\lll A\rr ,[B,C])=[AB,AC]$. Let $W_{d,e}^u\subset P_{d,e}$ be the image of~$\mu_{d,e}^u$ endowed with its reduced structure. One has $W^0_{d,e}=\varnothing$ and $W^d_{d,e}=P_{d,e}$. Note that $W_{d,e}^{d-1}=\Delta_{d,e}$.

It will sometimes be easier to work on $P_d\times P_e$ rather than on $P_{d,e}$. For this reason, we introduce the morphisms $\widetilde{\mu}_{d,e}^u:P_{d-u}\times P_u\times P_{e-d+u}\to P_d\times P_e$
given by~$\widetilde{\mu}_{d,e}^u(\lll A\rr,\lll B\rr,\lll C\rr)=(\lll AB\rr,\lll AC\rr)$ and define $\wW_{d,e}^u$ to be the image of $\wmu_{d,e}^u$.

\section{Blowing up \texorpdfstring{$P_{d,e}$}{Pde}}
\label{secblowup}

In this section, we construct and describe a modification $\whP_{d,e}$ of $P_{d,e}$.

\subsection{The variety \texorpdfstring{$\whP_{d,e}$}{hatPde}}

Fix $1\leq d<e$. For $0\leq u<d$, we let $\beta^uP_{d,e}$ be the variety obtained  by blowing up in $P_{d,e}$ first $W_{d,e}^1$, then the strict transform of $W_{d,e}^2$, etc., and lastly the strict transform of~$W_{d,e}^{u}$.  We define $\whP_{d,e}:=\beta^{d-1}P_{d,e}$.

  For $0\leq u<v<d$, we let $\beta^uW_{d,e}^v$ be the strict transform of~$W_{d,e}^v$ in $\beta^uP_{d,e}$. We also let $\beta^u\mu_{d,e}^v:P_{d-v}\times\beta^u P_{v,e-d+v}\dashrightarrow\beta^u P_{d,e}$ be the rational map induced by~$\mu_{d,e}^v$ (see \eqref{defphi}). It will be shown to be a morphism in Proposition \ref{blowuprec} below. We define~$\whmu_{d,e}^{\,v}:=\beta^{v-1}\mu_{d,e}^v$.

For $1\leq u<d$, we let $\alpha^u_{d,e}:\beta^uP_{d,e}\to\beta^{u-1}P_{d,e}$ be the blow-up of~$\beta^{u-1}W^u_{d,e}$, with exceptional divisor $E_{d,e}^u\subset \beta^uP_{d,e}$. For $1\leq v\leq u<d$, we let $\beta^uE^v_{d,e}$ be the strict transform of $E^v_{d,e}$ in $\beta^u P_{d,e}$. We define ${\whE_{d,e}^u:=\beta^{d-1}E_{d,e}^u\subset \whP_{d,e}}$. 

When this cannot cause any confusion, we write $\cO_X(l,m)$ or $\cO(l,m)$ for the pull-back of the line bundle~$\cO_{P_{d,e}}(l,m)$ on any blow-up $X$ of $P_{d,e}$ (such as $X=\whP_{d,e}$).

Our goal  is the following theorem.

\begin{thm}
\label{blowup}
Fix $1\leq d<e$.
\begin{enumerate}[label=(\roman*)] 
\item
\label{bi}
The variety $\whP_{d,e}$ is smooth and the $(\whE_{d,e}^u)_{1\leq u<d}$ form a strict normal crossings divisor in it.
\item
\label{bii}
 For $1\leq u< d$, there is a canonical isomorphism 
 \begin{equation}
 \label{canoniso}
\varphi_{d,e}^u:\whP_{d-u,e+u}\times\whP_{u,e-d+u}\isoto \whE_{d,e}^u.
 \end{equation}
 Let $p_1:\whE_{d,e}^u\to\whP_{d-u,e+u}$ and $p_2:\whE_{d,e}^u\to \whP_{u,e-d+u}$ be the projections. 
\item
\label{biii}
 If $v<u$, then $\whE_{d,e}^v|_{\whE_{d,e}^u}=p_2^*\whE_{u,e-d+u}^v$ as Cartier divisors.
\item
\label{biv}
If $v>u$, then $\whE_{d,e}^v|_{\whE_{d,e}^u}=p_1^*\whE_{d-u,e+u}^{v-u}$ as Cartier divisors.
\item
\label{bv}
One has $\cO_{\whP_{d,e}}(\whE_{d,e}^u)|_{\whE_{d,e}^u}\simeq p_1^*\cO(1,-1)\otimes p_2^*\cO(1,1)(-\sum_{w=1}^{u-1}\whE_{u,e-d+u}^{w})$.
\item
\label{bvi}
One has $\cO_{\whP_{d,e}}(l,m)|_{\whE_{d,e}^u}\simeq p_1^*\cO(l+m,0)\otimes p_2^*\cO(l,m)$.
\end{enumerate}
\end{thm}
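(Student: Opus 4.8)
The plan is to prove all six assertions \ref{bi}--\ref{bvi} simultaneously, by induction on $d$. The base case $d=1$ is empty: there are no blow-ups, $\whP_{1,e}=P_{1,e}\simeq\P^1_k$, and \ref{bii}--\ref{bvi} are vacuous. For the inductive step one cannot isolate \ref{bii}: the restriction formulas \ref{biii} and \ref{biv} are exactly what is needed to feed the induction, so the whole package must be carried together. Note that both factors $\whP_{d-u,e+u}$ and $\whP_{u,e-d+u}$ appearing in \eqref{canoniso} have first index strictly smaller than $d$, so the outer inductive hypothesis applies to each of them. Within the inductive step I would argue by a secondary induction on the blow-up index $u$, controlling at each stage the smoothness of $\beta^uP_{d,e}$, the smoothness of the next center $\beta^uW_{d,e}^{u+1}$, the structure of the exceptional divisor just created, and the way all previously created divisors meet it.

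The first task is to understand the centers. I would show that $\whmu_{d,e}^{\,u}=\beta^{u-1}\mu_{d,e}^u$ is a closed immersion with smooth image $\beta^{u-1}W_{d,e}^u\simeq P_{d-u}\times\whP_{u,e-d+u}$. Since $\mu_{d,e}^u(\lll A\rr,[B,C])=[AB,AC]$ has common factor $A\cdot\gcd(B,C)$ of degree $\geq d-u$, the map $\mu_{d,e}^u$ fails to be injective exactly along the loci where $\gcd(B,C)\neq 1$, i.e.\ over the deeper strata $W_{d,e}^{u'}$ with $u'<u$ (recall that $W_{d,e}^{u}$ is the locus $\deg(\gcd)\geq d-u$, so small $u$ corresponds to the deepest stratum). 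Blowing up $W_{d,e}^1,\dots,W_{d,e}^{u-1}$ first separates these branches and, on the source, resolves the second factor $P_{u,e-d+u}$ into $\whP_{u,e-d+u}$ by the outer inductive hypothesis applied to the smaller value $u<d$. Tracking the strict transforms of the $E_{d,e}^{v}$ with $v<u$ through this process gives assertion~\ref{biii} and identifies the center of the $u$-th blow-up with $P_{d-u}\times\whP_{u,e-d+u}$.

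The heart of the argument is the identification of the exceptional divisor $E_{d,e}^u$ of $\alpha_{d,e}^u$. I would compute the normal bundle $N$ of the smooth center inside $\beta^{u-1}P_{d,e}$ using the defining sequence \eqref{sesqp} together with the relative Euler sequence of the projective bundle $P_{d,e}\to P_d$, and check that its projectivization is the pullback along the first projection of the bundle $P_{d-u,e+u}\to P_{d-u}$. The degree shift from $e$ to $e+u$ is the delicate point: it comes from the twist by the relative hyperplane class $\cO_{P_{d,e}}(0,1)$, which enters precisely because in $P_{d,e}$ the polynomial $G$ is defined only modulo $\lll F\rr_e$. This yields $E_{d,e}^u\simeq P_{d-u,e+u}\times\whP_{u,e-d+u}$. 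Finally, the subsequent blow-ups of $W_{d,e}^{u+1},\dots,W_{d,e}^{d-1}$, restricted to $E_{d,e}^u$, form exactly the tower of $d-u-1$ blow-ups that upgrades the first factor $P_{d-u,e+u}$ to $\whP_{d-u,e+u}$; this simultaneously produces the isomorphism $\varphi_{d,e}^u$ of \eqref{canoniso} and the restriction formula \ref{biv}. Smoothness and the strict normal crossings property \ref{bi} then follow by verifying, at each stage of this recursion, that the new center is smooth and meets the existing boundary transversally.

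The remaining assertions \ref{bv} and \ref{bvi} are obtained by propagating tautological classes through this construction. For \ref{bvi}, the class $\cO_{P_{d,e}}(1,0)$ is pulled back from $P_d$ and splits under the multiplication $P_{d-u}\times P_u\to P_d$ into $p_1^*\cO(1,0)\otimes p_2^*\cO(1,0)$, while $\cO_{P_{d,e}}(0,1)$ restricts, via the normal bundle identification above, to $p_1^*\cO(1,0)\otimes p_2^*\cO(0,1)$; combining these gives the stated formula. For \ref{bv}, one starts from $\cO(E_{d,e}^u)|_{E_{d,e}^u}\simeq\cO_{\P(N)}(-1)$, rewrites it through the identification of $N$, and incorporates the correction term $p_2^*\cO(-\sum_{w=1}^{u-1}\whE_{u,e-d+u}^{w})$ recording the effect of the earlier blow-ups on the second factor (compatibly with \ref{biii}), together with the changes induced by the later blow-ups. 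The main obstacle throughout is organizational rather than computational: because \ref{biii} and \ref{biv} are themselves used to run the induction that proves \ref{bii}, the six statements are genuinely entangled and must be threaded through a single induction with carefully matched recursive structures on the two factors $\whP_{d-u,e+u}$ and $\whP_{u,e-d+u}$.
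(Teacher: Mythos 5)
Your plan coincides in all essentials with the paper's argument: the paper proves Theorem \ref{blowup} via Proposition \ref{blowuprec}, a double induction on $d$ and on the blow-up index $u$, in which the earlier blow-ups turn $\mu_{d,e}^u$ into a closed immersion with source $P_{d-u}\times\whP_{u,e-d+u}$, the normal bundle computation (via the differential of $\wmu_{d,e}^u$ and a Koszul lemma) identifies $E_{d,e}^u$ with $P_{d-u,e+u}\times\whP_{u,e-d+u}$, the later blow-ups perform the same tower on the first factor, and \ref{bv}--\ref{bvi} follow by restricting tautological classes. The only caveat is that your phrase ``tracking the strict transforms through this process'' hides the most delicate step of the paper's proof --- the scheme-theoretic cartesian diagram \eqref{diagmultmap}, established via the tangent-space statement of Proposition \ref{immdef2}~\ref{ddiii}, a degree count $\binom{d-u}{d-v}$ and Nakayama --- but this is a matter of detail, not of approach.
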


\subsection{The differential of the multiplication maps}

One of the difficulties of the proof of Theorem \ref{blowup} is that the $\mu_{d,e}^u$ become immersive only after the previous strata have been blown up. Proposition \ref{immdef2} will help us to overcome this obstacle.

\begin{prop}
\label{immdef2}
Fix $0\leq v\leq u<d<e$. Set $Q_{d,e}^u:=P_{d-u}\times P_{u,e-d+u}$.
\begin{enumerate}[label=(\roman*)] 
\item
\label{ddi}
The multiplication map $\mu^u_{d,e}:Q_{d,e}^u\to P_{d,e}$ is immersive on the open subset
$\Omega_{d,e}^u:=\{(\lll A\rr,[B,C])\in Q_{d,e}^u\mid\gcd(A,B,C)=1\}$.
\item
\label{ddii}
The normal bundle to $(\mu^u_{d,e})|_{\Omega_{d,e}^u}$ is isomorphic to $$(p_1^*\mathcal{E}_{d-u,e+u}(1)\otimes p_2^*\cO_{P_{u,e-d+u}}(1,1))|_{\Omega_{d,e}^u},$$ where~$p_1$ and $p_2$ are the projections of $Q_{d,e}^u$ onto $P_{d-u}$ and $P_{u,e-d+u}$.
\item
\label{ddiii}
Fix $(\lll B\rr,\lll C\rr)\in W_{u,e-d+u}^v\setminus W_{u,e-d+u}^{v-1}$ and
$\xi\in T_{(\lll A\rr,\lll B\rr,\lll C\rr)}Q_{d,e}^u$ with $d\mu^u_{d,e}(\xi)$  tangent to $W_{d,e}^v\setminus W_{d,e}^{v-1}$. Then $\xi$ is tangent to ${P_{d-u}\times W_{u, e-d+u}^v}$.
\end{enumerate}
\end{prop}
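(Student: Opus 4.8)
The plan is to compute with first-order deformations throughout. I represent a tangent vector to $Q_{d,e}^u$ at $(\lll A\rr,[B,C])$ by a triple $(\dot A,\dot B,\dot C)\in V_{d-u}\oplus V_u\oplus V_{e-d+u}$, where $\dot A$ is taken modulo $kA$, $\dot B$ modulo $kB$, and $\dot C$ modulo $kC+\lll B\rr_{e-d+u}$ (the equivalences imposed by the projective bundle $P_{u,e-d+u}\to P_u$); likewise a tangent vector to $P_{d,e}$ at $[F,G]$ is a pair $(\dot F,\dot G)\in V_d\oplus V_e$ with $\dot F$ modulo $kF$ and $\dot G$ modulo $kG+\lll F\rr_e$. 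Differentiating $\mu_{d,e}^u(\lll A\rr,[B,C])=[AB,AC]$ gives
\[
d\mu_{d,e}^u(\dot A,\dot B,\dot C)=(\dot A B+A\dot B,\ \dot A C+A\dot C),
\]
and all three parts are read off from this formula. For \ref{ddi}, a kernel element satisfies $\dot AB+A\dot B=\rho AB$ and $\dot AC+A\dot C=\rho'AC+ABK$ for some $\rho,\rho'\in k$, $K\in V_{e-d}$. Writing $g=\gcd(A,B)$, $A=gA'$, $B=gB'$, the first relation forces $\dot A-\rho A=A'P$ with $\deg P=\deg g$ and $\dot B=-PB'$; substituting into the second and cancelling $A'$ and $g$ leaves $g\mid PC$. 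Here the hypothesis enters decisively: on $\Omega_{d,e}^u$ one has $\gcd(g,C)=\gcd(A,B,C)=1$, so $g\mid P$, hence $P\in kg$ for degree reasons, and then $\dot A\in kA$, $\dot B\in kB$, $\dot C\in kC+\lll B\rr_{e-d+u}$; thus the kernel is trivial.

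For \ref{ddii}, I would identify the normal bundle through one explicit surjection. Send a tangent vector $(\dot F,\dot G)$ to
\[
\psi(\dot F,\dot G):=\dot F\,C-\dot G\,B \bmod \lll A\rr_{e+u}\in V_{e+u}/\lll A\rr_{e+u}=(\cE_{d-u,e+u})_{\lll A\rr}.
\]
One checks that $\psi$ kills all of the equivalences defining $T_{[F,G]}P_{d,e}$ as well as the entire image of $d\mu_{d,e}^u$, so it descends to the normal bundle. Its surjectivity is equivalent to $\lll A\rr_{e+u}+\lll B\rr_{e+u}+\lll C\rr_{e+u}=V_{e+u}$, i.e.\ to the degree-$(e+u)$ part of the ideal $(A,B,C)$ being everything; this holds because $\gcd(A,B,C)=1$, upon reducing to the coprime pair $g=\gcd(A,B)$ and $C$ and applying the complete-intersection regularity bound $\deg g+\deg C-1\le e+u$. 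By \ref{ddi} the cokernel has rank $(2d-1)-(d+u-1)=d-u$, equal to the rank of $\cE_{d-u,e+u}$, so $\psi$ is a fibrewise isomorphism. Tracking how $\psi$ rescales under $A\mapsto\lambda A$, $B\mapsto\lambda B$ and $C\mapsto\lambda C$—each contributing a single factor of $\lambda$—pins down the twists as $p_1^*\cO(1)$, $p_2^*\cO(1,0)$ and $p_2^*\cO(0,1)$, giving the asserted normal bundle $p_1^*\cE_{d-u,e+u}(1)\otimes p_2^*\cO(1,1)$.

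For \ref{ddiii}, set $B_0=\gcd(B,C)$, so $\deg B_0=u-v$, and put $B_1=B/B_0$, $C_1=C/B_0$, $D=AB_0$; then $\gcd(B_1,C_1)=1$, $[F,G]=[DB_1,DC_1]$, and $(\lll D\rr,[B_1,C_1])\in\Omega_{d,e}^v$. By \ref{ddi}, $\mu_{d,e}^v$ is an immersion there and parametrizes $W_{d,e}^v\setminus W_{d,e}^{v-1}$ near $[F,G]$, so the hypothesis furnishes $\dot D\in V_{d-v}$ and a tangent vector $(\dot B_1,\dot C_1)$ to $P_{v,e-d+v}$ at $[B_1,C_1]$ with $(\dot F,\dot G)=(\dot D B_1+D\dot B_1,\ \dot D C_1+D\dot C_1)$ in $T_{[F,G]}P_{d,e}$. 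Substituting $\dot F=\dot AB+A\dot B$, $\dot G=\dot AC+A\dot C$ and writing $M=\dot D-\dot A B_0$, one gets $A\mid MB_1$ and $A\mid MC_1$; since $\gcd(B_1,C_1)=1$ this gives $A\mid M$, say $M=AN$ with $N\in V_{u-v}$. Dividing by $A$ yields
\[
\dot B=N B_1+B_0\dot B_1+\rho B,\qquad \dot C=N C_1+B_0\dot C_1+\rho'C+BK,
\]
which is exactly $(\dot B,\dot C)=d\mu_{u,e-d+u}^v\big(N,(\dot B_1,\dot C_1)\big)$ in $T_{[B,C]}P_{u,e-d+u}$. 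Hence $(\dot B,\dot C)$ is tangent to $W_{u,e-d+u}^v$ and $\xi$ is tangent to $P_{d-u}\times W_{u,e-d+u}^v$. Notably this uses only $\gcd(B_1,C_1)=1$, not $\gcd(A,B,C)=1$, matching the hypotheses of \ref{ddiii}.

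I expect the main obstacle to be part \ref{ddii}: guessing the defining map $\psi$, and above all turning $\gcd(A,B,C)=1$ into the effective surjectivity statement $(A,B,C)_{e+u}=V_{e+u}$ (a Castelnuovo--Mumford-type regularity bound, not mere nonvanishing), together with the bookkeeping of the three tautological twists. Part \ref{ddiii} is less computational but hinges on the identification---via \ref{ddi}---of the tangent space to the stratum $W_{d,e}^v\setminus W_{d,e}^{v-1}$ with the image of $d\mu_{d,e}^v$; granting that, the rest is the clean divisibility argument above.
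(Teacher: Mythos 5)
Your proof is correct and follows essentially the same route as the paper: the explicit formula for the differential of the multiplication map, the kernel analysis by $\gcd$ divisibility, the normal-bundle identification via $(\dot F,\dot G)\mapsto C\dot F-B\dot G$ modulo $\lll A\rr_{e+u}$ with surjectivity reduced to ``$(A,B,C)$ generates $V_{e+u}$'', and part (iii) by recognizing tangent vectors to $W^v_{d,e}\setminus W^{v-1}_{d,e}$ as images under $d\mu^v_{d,e}$ and dividing out. Two points of genuine divergence are worth recording. First, the paper does not compute on $Q^u_{d,e}$ directly: it proves the analogous statements for $\widetilde\mu^u_{d,e}$ on $P_{d-u}\times P_u\times P_{e-d+u}$ and transfers them via the rational map $P_d\times P_e\dashrightarrow P_{d,e}$; your direct computation is equivalent but must (and does) carry the extra equivalence $\dot C\sim\dot C+BH$ through the kernel argument. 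Second, the paper proves the generation statement $\lll A\rr_{e+u}+\lll B\rr_{e+u}+\lll C\rr_{e+u}=V_{e+u}$ by the Koszul complex of $(A,B,C)$ and vanishing of $H^1,H^2$ on $\P^1_k$ (Lemma~\ref{Koszullemma}), whereas you reduce to the coprime pair $(\gcd(A,B),C)$ and invoke the regularity bound for two coprime binary forms; this works, but note that the reduction itself silently uses $\lll A\rr_{e+u}+\lll B\rr_{e+u}=\lll \gcd(A,B)\rr_{e+u}$, which is a second application of the same two-form bound (the degree inequalities do check out, since $e>d$). Your bookkeeping of the three tautological twists by ``tracking rescalings'' is heuristic where the paper builds an actual morphism of sheaves from the Euler sequence, but it lands on the correct answer and is easily made rigorous.
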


Making use of the rational map ${P_d\times P_e\dashrightarrow P_{d,e}}$ given by $(\lll F\rr,\lll G\rr)\mapsto [F,G]$, it is straightforward to
deduce the properties of $\mu^u_{d,e}$ that appear in Proposition \ref{immdef2} from the corresponding results for~$\wmu^u_{d,e}$ stated in Proposition \ref{immdef1} below.

\begin{prop}
\label{immdef1}
Fix $0\leq v\leq u<d<e$. Set $\wQ_{d,e}^u:=P_{d-u}\times P_{u}\times P_{e-d+u}$.
\begin{enumerate}[label=(\roman*)] 
\item
\label{di}
The multiplication map $\wmu^u_{d,e}:\wQ_{d,e}^u\to P_{d,e}$ is immersive on the open subset
$\wOmega_{d,e}^u:=\{(\lll A\rr,\lll B\rr,\lll C\rr)\in \wQ_{d,e}^u\mid\gcd(A,B,C)=1\}$.
\item
\label{dii}
The normal bundle to $(\wmu^u_{d,e})|_{\wOmega_{d,e}^u}$ is isomorphic to
$$(\tp_1^*\mathcal{E}_{d-u,e+u}(1)\otimes \tp_2^*\cO_{P_{u}\times P_{e-d+u}}(1,1))|_{\tilde{\Omega}_{d,e}^u},$$ where $\tp_1$ and $\tp_2$ are the projections 
of $\wQ_{d,e}^u$ onto $P_{d-u}$ and $P_{u}\times P_{e-d+u}$.
\item
\label{diii}
Fix $(\lll B\rr,\lll C\rr)\in \wW_{u,e-d+u}^v\setminus \wW_{u,e-d+u}^{v-1}$ and
$\xi\in T_{(\lll A\rr,\lll B\rr,\lll C\rr)}\wQ_{d,e}^u$ with $d\wmu^u_{d,e}(\xi)$  tangent to $\wW_{d,e}^v\setminus \wW_{d,e}^{v-1}$. Then $\xi$ is tangent to ${P_{d-u}\times \wW_{u, e-d+u}^v}$.
\end{enumerate}
\end{prop}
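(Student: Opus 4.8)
The plan is to work with representatives throughout. Choose nonzero $A\in V_{d-u}$, $B\in V_u$, $C\in V_{e-d+u}$; a tangent vector to $\wQ^u_{d,e}$ at $(\lll A\rr,\lll B\rr,\lll C\rr)$ is then a triple $(a,b,c)$ with $a\in V_{d-u}/\lll A\rr$, $b\in V_u/\lll B\rr$, $c\in V_{e-d+u}/\lll C\rr$, and Leibniz's rule gives
\[
d\wmu^u_{d,e}(a,b,c)=\big(aB+Ab\bmod \lll AB\rr,\ aC+Ac\bmod \lll AC\rr\big).
\]
One checks this is well defined. All three assertions are read off from this single formula.

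For \ref{di}, suppose $d\wmu^u_{d,e}(a,b,c)=0$, so that $aB+Ab\in\lll AB\rr$ and $aC+Ac\in\lll AC\rr$, whence $A\mid aB$ and $A\mid aC$. I would argue factor by factor: for every irreducible $\ell\mid A$ the hypothesis $\gcd(A,B,C)=1$ forces $v_\ell(B)=0$ or $v_\ell(C)=0$, and in either case $v_\ell(a)\geq v_\ell(A)$; summing over $\ell$ gives $A\mid a$, i.e. $a\in\lll A\rr$. Substituting back into the two relations yields $b\in\lll B\rr$ and $c\in\lll C\rr$, so $(a,b,c)=0$.

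For \ref{dii}, I would compute the normal space as an iterated cokernel. The $b$- and $c$-directions map isomorphically (by multiplication by $A$) onto $(\lll A\rr_d/\lll AB\rr)\oplus(\lll A\rr_e/\lll AC\rr)$, so after quotienting the target by their image the normal space becomes the cokernel of
\[
V_{d-u}/\lll A\rr\xrightarrow{\ a\mapsto(aB,\,aC)\ }(V_d/\lll A\rr_d)\oplus(V_e/\lll A\rr_e).
\]
I then introduce the natural map $\Phi\colon (V_d/\lll A\rr_d)\oplus(V_e/\lll A\rr_e)\to V_{e+u}/\lll A\rr_{e+u}$, $(p,q)\mapsto pC-qB\bmod\lll A\rr_{e+u}$, which annihilates the image above and is manifestly compatible with the tautological twists, so that the resulting bundle is exactly $\tp_1^*\cE_{d-u,e+u}(1)\otimes\tp_2^*\cO(1,1)$. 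Since both sides have rank $d-u$, a dimension count reduces the claim to surjectivity of $\Phi$, i.e. to $(A,B,C)_{e+u}=V_{e+u}$. Here $\gcd(A,B,C)=1$ re-enters: with $g=\gcd(B,C)$ one has $\gcd(A,g)=1$, and combining the standard fact that two coprime forms of degrees $p,q$ generate all forms in degrees $\geq p+q-1$, applied to the pairs $(A,g)$ and $(B,C)$, shows the ideal fills $V_{e+u}$ (the degree $e+u$ comfortably exceeds both thresholds since $u<d<e$). As every map is natural in $(A,B,C)$, fiberwise exactness upgrades to an isomorphism of bundles. I expect the twist bookkeeping together with this surjectivity bound to be the main technical point of \ref{dii}.

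For \ref{diii}, write $g=\gcd(B,C)$ of degree $u-v$, with $B=gB_0$, $C=gC_0$ and $\gcd(B_0,C_0)=1$; then the image point has $\gcd(AB,AC)=Ag$ of degree $d-v$, with coprime parts $B_0,C_0$. Since $\wmu^v$ is immersive along the coprime locus by \ref{di}, the tangent spaces to $\wW^v_{d,e}\setminus\wW^{v-1}_{d,e}$ and to $\wW^v_{u,e-d+u}\setminus\wW^{v-1}_{u,e-d+u}$ are images of the corresponding differentials. I would first record that, at our point, membership of $(b,c)$ in $T\wW^v_{u,e-d+u}$ is equivalent to the single divisibility $g\mid(C_0b-B_0c)$: one inclusion is immediate, and the reverse follows from a dimension count using $\gcd(B_0,C_0)=1$, so that $B_0V_{e-d+u}+C_0V_u=V_{e-d+u+v}$. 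Then, unwinding the hypothesis that $d\wmu^u_{d,e}(a,b,c)$ is tangent to $\wW^v_{d,e}\setminus\wW^{v-1}_{d,e}$ produces relations $A(b-g\dot B_0)=B_0P$ and $A(c-g\dot C_0)=C_0P'$ with $P-P'$ a multiple of $Ag$; cross-multiplying by $C_0$ and $B_0$, subtracting, and cancelling the nonzero factor $A$ yields precisely $g\mid(C_0b-B_0c)$. Hence $(b,c)\in T\wW^v_{u,e-d+u}$ and $(a,b,c)$ is tangent to $P_{d-u}\times\wW^v_{u,e-d+u}$, as desired (note that no coprimality of $A$ with $B,C$ is needed). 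The delicate step here is isolating the $A$-free invariant $C_0b-B_0c$ and verifying its equivalence with tangency.
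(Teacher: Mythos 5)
Your proposal is correct and follows the paper's overall strategy: the same formula for the differential, the same kernel argument for \ref{di}, the same reduction of \ref{dii} to surjectivity of $(p,q)\mapsto pC-qB$ onto $V_{e+u}/\lll A\rr_{e+u}$ via a rank count, and the same use of the immersivity of $\wmu^v_{d,e}$ on the coprime locus in \ref{diii}. There are two genuine local differences. For the key surjectivity in \ref{dii} (that $A$, $B$, $C$ generate $V_{e+u}$ when $\gcd(A,B,C)=1$), the paper isolates this as Lemma \ref{Koszullemma} and proves it with the Koszul complex on $\P^1_k$ and a cohomology vanishing, whereas you give an elementary two-step Sylvester-type argument through $g=\gcd(B,C)$; this works, though the ``standard fact'' applies to the coprime pair $(B/g,C/g)$ rather than to $(B,C)$ itself, so the intermediate statement should read $\lll B\rr_l+\lll C\rr_l=\lll g\rr_l$ for $l$ in the relevant range --- your degree bounds then go through. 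For \ref{diii}, the paper solves explicitly for a preimage tangent vector (deducing $A\mid KH-H'$ from the coprimality of $B/K$ and $C/K$ and back-substituting), whereas you characterize the tangent space to $\wW^v_{u,e-d+u}$ as $\{(b,c)\mid g\textrm{ divides }C_0b-B_0c\}$ by a dimension count and verify that divisibility by elimination; both arguments turn on the same coprimality and the same cancellation of $A$, but your formulation makes it transparent that no coprimality of $A$ with $B,C$ is used, matching the generality of the statement. The one point left schematic is the bundle-level bookkeeping of the tautological twists in \ref{dii}, which you flag and which the paper carries out via the Euler sequence; filling it in is routine and your fiberwise computation is the substantive content.
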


\begin{proof}
For $l\geq 0$, we identify $T_{\lll F\rr}P_l$ with $V_l/\lll F\rr$.
Using this identification, one computes that 
$(d\wmu^u_{d,e})_{(\lll A\rr ,\lll B\rr ,\lll C\rr)}$ is given by
\begin{equation}
\label{calculdiff}
\begin{alignedat}{2}
V_{d-u}/\lll A\rr\oplus & V_u/\lll B\rr\oplus V_{e-d+u}/\lll C\rr 
& \hspace{.1em}\to\hspace{.1em}  &  V_{d}/\lll AB\rr\oplus V_{e}/\lll AC\rr  \\
&(H,I,J)&\hspace{.1em}\mapsto\hspace{.1em}  & (BH+AI, CH+AJ)
\end{alignedat}
\end{equation}
for all $(\lll A\rr ,\lll B\rr ,\lll C\rr)\in P_{d-u}\times P_u \times P_{e+d-u}$.
 
If $(H,I,J)\in \Ker((d\wmu^u_{d,e})_{(\lll A\rr ,\lll B\rr ,\lll C\rr)})$, then \eqref{calculdiff} shows that $AB$ divides
$BH+AI$ and $AC$ divides $CH+AJ$, hence that~$A$ divides~$BH$ and~$CH$ (and clearly also~$AH$).
When $(\lll A\rr ,\lll B\rr ,\lll C\rr)\in \wOmega_{d,e}^u$, we deduce that $A$ divides $H$. It then follows that~$B$ divides~$I$ and $C$ divides $J$. This proves \ref{di}.

The Euler exact sequence identifies the tangent bundle $T_{P_d\times P_e}$ with a quotient of $V_{d}\otimes\cO_{P_d\times P_e}(1,0)\oplus V_{e}\otimes\cO_{P_d\times P_e}(0,1)$.
In turn, it identifies $(\wmu^u_{d,e})^*T_{P_d\times P_e}$ with a quotient of 
$V_{d}\otimes\cO_{\wQ_{d,e}^u}(1,1,0)\oplus V_{e}\otimes\cO_{\wQ_{d,e}^u}(1,0,1)$.
For $(\lll A\rr ,\lll B\rr ,\lll C\rr)\in \wQ_{d,e}^u$, consider the linear map
${V_{d}\oplus V_{e}\to V_{e+u}}$ given by
$(F,G)\mapsto BG-CF$. When~$(A,B,C)$ varies, this gives rise to a morphism of sheaves on $\wQ_{d,e}^u$ that reads:
\begin{equation}
\label{sheafipoly}
V_{d}\otimes\cO_{\tq_{d,e}^u}(1,1,0)\oplus V_{e}\otimes\cO_{\tq_{d,e}^u}(1,0,1)\to
V_{e+u}\otimes\cO_{\tq_{d,e}^u}(1,1,1).
\end{equation}
Consider the composition of the morphism \eqref{sheafipoly} and of the quotient morphism $V_{e+u}\otimes\cO_{\tq_{d,e}^u}(1,1,1)\to \tp_1^*\mathcal{E}_{d-u,e+u}\otimes\tp_2^*\cO_{\tq_{d,e}^u}(1,1,1)$. The description of $d\wmu^u_{d,e}$ given in \eqref{calculdiff} shows that, when restricted to $\wOmega_{d,e}^u$, this composition factors through the normal bundle $N_{(\tilde{\mu}^u_{d,e})|_{\tilde{\Omega}_{d,e}^u}}$ of the immersion~$(\wmu^u_{d,e})|_{\tilde{\Omega}_{d,e}^u}$. In this manner, we get a morphism of sheaves $\zeta:N_{(\tilde{\mu}^u_{d,e})|_{\tilde{\Omega}_{d,e}^u}}\to\big(\tp_1^*\mathcal{E}_{d-u,e+u}\otimes\tp_2^*\cO_{\tq_{d,e}^u}(1,1,1)\big)|_{\tilde{\Omega}_{d,e}^u}$. 

To prove \ref{dii}, we show that $\zeta$ is an isomorphism. As it is a morphism between locally free sheaves of the same rank (equal to~${d-u}$), it suffices to check that~$\zeta$ is surjective at the level of fibers. Fix ${(\lll A\rr ,\lll B\rr ,\lll C\rr)\in \wOmega_{d,e}^u}$. By construction, the map $\zeta_{(\lll A\rr ,\lll B\rr ,\lll C\rr)}$ is induced by the linear map ${V_{d}\oplus V_{e}\to V_{e+u}/\lll A\rr}$ given by
\begin{equation}
\label{keyidentification}
(F,G)\mapsto BG-CF.
\end{equation}
It thus suffices to show that \eqref{keyidentification} is surjective, which follows from Lemma \ref{Koszullemma} below.

\begin{lem}
\label{Koszullemma}
Fix $1\leq u<d<e$. If $(A,B,C)\in V_{d-u}\oplus V_u\oplus V_{e-d+u}$ are such that $\gcd(A,B,C)=1$, then $V_{e+u}$ is generated by the multiples of $A$, of $B$ and of $C$.
\end{lem}

\begin{proof}
Set $\cF:=\cO_{\P^1_k}(d-u)\oplus \cO_{\P^1_k}(u)\oplus \cO_{\P^1_k}(e-d+u)$.
The Koszul complex 
\begin{equation}
\label{Koszulcomplex}
0\to \extp^3\cF^{\vee}\to \extp^2\cF^{\vee}\to  \cF^{\vee}\xrightarrow{(A,B,C)} \cO_{\P^1_k}\to 0
\end{equation}
is an exact sequence of sheaves on $\P^1_k$, because $\gcd(A,B,C)=1$. Tensoring \eqref{Koszulcomplex} with $\cO_{\P^1_k}(e+u)$ and taking global sections induces a surjective morphism
\begin{equation}
\label{surjectiveproduct}
H^0(\P^1_k,\cF^{\vee}(e+u))\xrightarrow{(A,B,C)} H^0(\P^1_k,\cO_{\P^1_k}(e+u))
\end{equation}
because $H^1(\P^1_k,\extp^2\cF^{\vee}(e+u))=H^2(\P^1_k,\extp^3\cF^{\vee}(e+u))=0$. The surjectivity of~\eqref{surjectiveproduct} was exactly what we wanted to prove.
\end{proof}

We resume the proof of Proposition \ref{immdef1} and turn to \ref{diii}. Define ${K:=\gcd(B,C)}$ in ${V_{u-v}}$.
Let~$B'\in V_{v}$ and~${C'\in V_{e-d+v}}$ be such that $B=B'K$ and~$C=C'K$.
Write 
$$\xi=(H,I,J)\textrm{ in }V_{d-u}/\lll A\rr\oplus  V_u/\lll B\rr\oplus V_{e-d+u}/\lll C\rr,$$ 
so $d\wmu^u_{d,e}(\xi)=(BH+AI,CH+AJ)$ in $V_{d}/\lll AB\rr\oplus V_{e}/\lll AC\rr$ (see \eqref{calculdiff}). Since $\wmu^v_{d,e}$ is an immersion on $\wOmega_{d,e}^v$ by \ref{di}, the hypothesis that $d\wmu^u_{d,e}(\xi)$ is tangent to $\wW_{d,e}^v\setminus \wW_{d,e}^{v-1}$ implies, by our computation of  $d\wmu^v_{d,e}$ (see \eqref{calculdiff}), that 
\begin{equation}
\label{twodiffs}
(BH+AI,CH+AJ)=(B'H'+AKI',C'H'+AKJ')\textrm{ in }V_{d}/\lll AB\rr\oplus V_{e}/\lll AC\rr
\end{equation}
for some $(H',I',J')\in V_{d-v}/\lll AK\rr\oplus  V_v/\lll B'\rr\oplus V_{e-d+v}/\lll C'\rr$.
It follows from \eqref{twodiffs} that $A$ divides both $B'(KH-H')$ and $C'(KH-K')$, hence that $A$ divides $KH-H'$. Write $KH-H'=AL$ for some $L\in V_{u-v}$. Plugging this into \eqref{twodiffs} shows that 
\begin{equation}
\label{twodiffs2}
(I,J)=(KI'-B'L,KJ'-C'L)\textrm{ in }V_{u}/\lll B\rr\oplus V_{e-d+u}/\lll C\rr.
\end{equation}
By the computation of $d\wmu_{u,e-d+u}^v$ (see \eqref{calculdiff}), equation \eqref{twodiffs2} shows that $\xi$ is tangent to ${P_{d-u}\times \wW_{u, e-d+u}^v}$, as required.
\end{proof}

\subsection{The blow-up tower}

The proof of Theorem \ref{blowup} proceeds by induction, taking advantage of the inductive descriptions of the exceptional divisors~$\whE_{d,e}^u$.
Let us state the precise proposition that we will prove, whose
statement is adapted for an inductive proof, and from which Theorem \ref{blowup} follows easily.
Remark \ref{blowupremarks} (i) below provides an informal explanation of the content of Proposition \ref{blowuprec}.

\begin{prop}
\label{blowuprec}
Fix $1\leq u<d<e$.
\begin{enumerate}[label=(\roman*)] 
\item
\label{ti}
The morphism $\mu_{d,e}^u$ induces a closed immersion 
\begin{equation}
\label{existphi}
\whmu_{d,e}^{\,u}:P_{d-u}\times \whP_{u,e-d+u}\to \beta^{u-1}P_{d,e}
\end{equation}
with normal bundle
$p_1^*\mathcal{E}_{d-u,e+u}(1)\otimes p_2^*\big(\cO_{\whP_{u,e-d+u}}(1,1)(-\hspace{-.1em}\sum_{w=1}^{u-1}\whE_{u,e-d+u}^{w})\big)$,
where~$p_1$ and $p_2$ are the projections of $P_{d-u}\times \whP_{u,e-d+u}$ onto its factors.
\item
\label{tii}
The variety $\beta^{u}P_{d,e}$ is the blow-up of the smooth subvariety
$\beta^{u-1}W_{d,e}^u$ (isomorphic to $P_{d-u}\times \whP_{u,e-d+u}$) in the smooth variety~$\beta^{u-1}P_{d,e}$ with exceptional divisor~$E_{d,e}^u$ is isomorphic to~${P_{d-u, e+u}\times \whP_{u,e-d+u}}$. It is smooth.
\item
\label{tiii}
Fix $(A,B,C,F,G)\in V_{d-u}\oplus V_u\oplus V_{e-d+u}\oplus V_d\oplus V_e$ with $\gcd(B,C)=1$
and $(F,G)$ not in the image of \eqref{calculdiff}.
Then $\lim _{t\to 0}\, [AB+tF,AC+tG]$ equals
$$([A,BG-CF],[B,C])\in P_{d-u, e+u}\times \whP_{u,e-d+u}\stackrel{\ref{tii}}{\simeq} E_{d,e}^u \textrm{ in } \beta^uP_{d,e}.$$
Moreover, if $(A,B,C,F,G)$ are general, then so is $([A,BG-CF],[B,C])$.
\item
\label{tiv}
The $(\beta^u E_{d,e}^w)_{1\leq w\leq u}$ form a strict normal crossings divisor in $\beta^{u}P_{d,e}$.
\item
\label{tv}
 For $u\leq v$, there is a cartesian diagram in which $\lambda$ denotes multiplication:
\begin{equation}
\label{diagmultmap}
\begin{gathered}
\xymatrix@R=1.3em{
\mathcal{X}:=P_{d-v}\times \beta^{u-1}P_{v,e-d+v}\ar[r]^{\hspace{1.3em}\beta^{u-1}\mu_{d,e}^v}
&   \beta^{u-1}P_{d,e}=:\mathcal{Z}    \\
\mathcal{W}:=P_{d-v}\times P_{v-u}\times \whP_{u,e-d+u}\ar[r]^{\hspace{1.3em}(\lambda,\Id)}\ar[u]_{(\Id,\whmu^{\; u}_{v, e-d+v})} 
&P_{d-u}\times \whP_{u,e-d+u}=:\mathcal{Y}. \ar[u]_{\whmu^{\; u}_{d,e}}
}
\end{gathered}
\end{equation}
For $z\in\whmu_{d,e}^{\,u}(\cY)\subset\cZ$, the scheme $(\beta^{u-1}\mu_{d,e}^v)^{-1}(z)$ is finite of degree~$\binom{d-u}{d-v}$.
\item
\label{tvi}
 For $1\leq u< v$, there is a commutative diagram
\begin{equation}
\begin{gathered}
\label{diagbupcomm}
\xymatrix@R=1.3em@C=7em{
P_{d-v}\times \beta^{u-1}P_{v,e-d+v}\ar[r]^{\hspace{2.7em}\beta^{u-1}\mu_{d,e}^v}
&   \beta^{u-1}P_{d,e}    \\
P_{d-v}\times \beta^{u}P_{v,e-d+v}\ar[r]^{\hspace{2.7em}\beta^u\mu_{d,e}^v}\ar[u]_{(\Id,\alpha_{v,e-d+v}^u)}
&   \beta^{u}P_{d,e}. \ar[u]_{\alpha_{d,e}^u}
}
\end{gathered}
\end{equation}
\item
\label{tvii}
 For $1\leq w\leq u< v$, there is a cartesian diagram
 \begin{equation}
\begin{gathered}
\label{diagdivexc}
\xymatrix@R=1.3em @C=1.9em{
P_{d-v}\times \beta^{u}P_{v,e-d+v}\ar[r]^{\hspace{2.7em}\beta^u\mu_{d,e}^v}
&   \beta^{u}P_{d,e}  \\
P_{d-v}\times \beta^uE_{v,e-d+v}^w\ar[r]\ar[u] &\beta^uE_{d,e}^w\ar[u]\\
P_{d-v}\times \beta^{u-w}P_{v-w,e-d+v+w}\times \whP_{w,e-d+w}
\ar[r]^{}\ar@{=}[u]
& \beta^{u-w}P_{d-w,e+w}\times \whP_{w,e-d+w}\ar@{=}[u] 
}
\end{gathered}
\end{equation}
whose lower horizontal arrow is $(\beta^{u-w}\mu_{d-w,e+w}^{v-w},\Id)$ and whose vertical arrows are the natural inclusions.
\end{enumerate}
\end{prop}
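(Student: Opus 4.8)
The plan is to prove Proposition \ref{blowuprec} by a nested induction: an outer induction on $d$ and, for each fixed $d$, an inner induction on $u$ that constructs the blow-up tower one step at a time. Every object appearing on the right-hand sides of the seven assertions --- $\whP_{u,e-d+u}$, $\whP_{d-u,e+u}$, $P_{d-u,e+u}$, $\mathcal{E}_{d-u,e+u}$, and the towers and exceptional divisors $\beta^{u-w}P_{v-w,e-d+v+w}$, $\whE_{u,e-d+u}^w$ occurring in \ref{tv}--\ref{tvii} --- has first index strictly smaller than $d$, so the outer induction hands us the full force of Theorem \ref{blowup} and Proposition \ref{blowuprec} at all smaller first indices. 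In particular the source factor $\whP_{u,e-d+u}$ in \eqref{existphi} and its exceptional divisors are already completely understood.

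The heart of the matter is \ref{ti}. I would first treat the open locus $\Omega_{d,e}^u=\{\gcd(A,B,C)=1\}$, where Proposition \ref{immdef2}\ref{ddi}--\ref{ddii} already furnishes an immersion with normal bundle $p_1^*\mathcal{E}_{d-u,e+u}(1)\otimes p_2^*\cO(1,1)$; no blow-up of the source is needed there, and the correction $-\sum_{w}\whE_{u,e-d+u}^w$ of \ref{ti} is supported off $\Omega$. The difficulty is the complementary non-coprime locus, where $\mu_{d,e}^u$ fails to be immersive and sends $P_{d-u}\times W_{u,e-d+u}^v$ into the deeper stratum $W_{d,e}^v$ for $v<u$. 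Using the inner inductive hypothesis, which presents $\beta^{u-1}P_{d,e}$ as an iterated blow-up of these deeper strata and, through \ref{tv}--\ref{tvii} at earlier stages, records how $\mu_{d,e}^u$ lifts across them, I would show that $\mu_{d,e}^u$ lifts to $\whmu_{d,e}^u$ once the source factor $P_{u,e-d+u}$ is itself blown up into $\whP_{u,e-d+u}$. The key separation input is Proposition \ref{immdef2}\ref{ddiii}: any tangent vector whose image is tangent to $W_{d,e}^v$ is already tangent to $P_{d-u}\times W_{u,e-d+u}^v$, so the blow-ups of source and target correspond and no tangent direction is collapsed. The normal bundle in \ref{ti} then comes from the formula on $\Omega$ corrected by the exceptional divisors of $\whP_{u,e-d+u}$, the bookkeeping being governed by the inductive form of Theorem \ref{blowup}\ref{bv}. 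Granting \ref{ti}, assertion \ref{tii} is formal: $E_{d,e}^u$ is the projectivization of the normal bundle, and since projectivization is unaffected by twisting by a bundle pulled back from the center, $E_{d,e}^u\cong\P(p_1^*\mathcal{E}_{d-u,e+u})\cong P_{d-u,e+u}\times\whP_{u,e-d+u}$, using $\P(\mathcal{E}_{d-u,e+u})\cong P_{d-u,e+u}$ over $P_{d-u}$. For \ref{tiii} I would compute directly in the blow-up: the arc $[AB+tF,AC+tG]$ meets the center transversally when $(F,G)$ avoids the image of \eqref{calculdiff}, and reading off its normal direction through the identification $\zeta$ of Proposition \ref{immdef1}\ref{dii}, namely $(F,G)\mapsto BG-CF$, gives the point $([A,BG-CF],[B,C])$; the ``moreover'' clause follows since $(F,G)\mapsto BG-CF$ is surjective by Lemma \ref{Koszullemma}.

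The remaining assertions are proved at this stage precisely to feed the next step of the inner induction. For \ref{tiv}, strict normal crossings follows by checking that each new center $\beta^{u-1}W_{d,e}^u$ meets the earlier divisors $\beta^{u-1}E_{d,e}^w$ ($w<u$) transversally with smooth intersections, which one reads from the product descriptions already in hand, transversality again resting on Proposition \ref{immdef2}\ref{ddiii}. The compatibilities \ref{tv}, \ref{tvi}, \ref{tvii} I would obtain by combining the multiplication formula $\mu(\langle A\rangle,[B,C])=[AB,AC]$ with the inductive descriptions of all towers involved: \ref{tvi} states that the higher multiplication maps commute with the $u$-th blow-up, \ref{tv} is checked fiberwise, the degree $\binom{d-u}{d-v}$ counting the factorizations $A=A_1A_2$ with $\deg A_1=d-v$, and \ref{tvii} identifies the preimage of $\beta^uE_{d,e}^w$ under $\beta^u\mu_{d,e}^v$ with the corresponding exceptional divisor upstairs via the isomorphism of \ref{tii} at the smaller first index $d-w$. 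Cartesianness in \ref{tv} and \ref{tvii} then follows by comparing the two sides over the immersive locus and extending.

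The step I expect to be the main obstacle is \ref{ti}, specifically upgrading the immersion on $\Omega_{d,e}^u$ to a closed immersion of the entire blown-up source into $\beta^{u-1}P_{d,e}$. This is where the whole inductive architecture is stressed at once: one must control how $\mu_{d,e}^u$ degenerates along the non-coprime locus, how this degeneration is resolved by the blow-ups of the deeper strata in the target (propagated by \ref{tv}--\ref{tvii} from smaller $u$), and how those mesh with the blow-ups turning $P_{u,e-d+u}$ into $\whP_{u,e-d+u}$ in the source. Proposition \ref{immdef2}\ref{ddiii} is the linchpin that forces the source and target blow-ups to match; without it one could not exclude the loss of a tangent direction, which would destroy injectivity of the differential and hence the closed-immersion claim.
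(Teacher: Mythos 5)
Your overall architecture coincides with the paper's: the same double induction (outer on $d$, inner on $u$), the same use of Proposition \ref{immdef2} and Lemma \ref{Koszullemma}, existence of the lift $\whmu_{d,e}^{\,u}$ extracted from \ref{tvi} at earlier stages, \ref{tii} and \ref{tiii} read off from the normal bundle and the identification \eqref{keyidentification}, and \ref{tiv} from the product description of the center. Two points, however, deserve attention; the first is a genuine gap.

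The step ``Cartesianness in \ref{tv} and \ref{tvii} then follows by comparing the two sides over the immersive locus and extending'' does not work as stated, and it is not a peripheral step: the cartesianness of \eqref{diagmultmap} is what produces \eqref{diagbupcomm} and \eqref{diagdivexc} at the next stage of the induction, hence the very existence of $\whmu_{d,e}^{\,u}$ and the boundary case of its immersivity. The natural map $\kappa:\cW\to\cV:=\cX\times_{\cZ}\cY$ is a closed immersion that is an isomorphism over the locus where $\mu_{d,e}^v$ is immersive, but a closed immersion which is an isomorphism over a dense open subset need not be an isomorphism: $\cV$ could a priori be nonreduced or carry extra components over the non-immersive locus. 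The paper's actual argument requires (a) the fiberwise degree count $\binom{d-u}{d-v}$ for $(\beta^{u-1}\mu_{d,e}^v)^{-1}(z)$ at \emph{every} $z\in\whmu_{d,e}^{\,u}(\cY)$, including points on the exceptional divisors where one must invoke \ref{tvii} and the outer induction; (b) finite flatness of $(\lambda,\Id)$ of that same degree; (c) the deduction that $\nu_*\cO_{\cV}$ is locally free of rank $\binom{d-u}{d-v}$ from the constancy of fiber rank together with reducedness of $\cY$; and (d) Nakayama to upgrade the surjection $\nu_*\cO_{\cV}\to\nu_*\cO_{\cW}$ of locally free sheaves of equal rank to an isomorphism. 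Your plan omits exactly the ingredients (a)--(d) that rule out the bad behaviour over the boundary, so as written it would not close the induction.

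A smaller misattribution: Proposition \ref{immdef2}~\ref{ddiii} is not what makes $\whmu_{d,e}^{\,u}$ immersive at a point $x$ with $\whmu_{d,e}^{\,u}(x)\in\beta^{u-1}E_{d,e}^w$. In the paper, that case is handled by observing that a kernel vector $\xi$ of $d\whmu_{d,e}^{\,u}$ is automatically tangent to the schematic preimage of the Cartier divisor $\beta^{u-1}E_{d,e}^w$, whose image under $\whmu_{d,e}^{\,u}$ is identified via \eqref{diagdivexc} with $(\whmu_{d-w,e+w}^{\,u-w},\Id)$, immersive by the outer induction on $d$. Assertion \ref{ddiii} enters instead in the cartesianness argument above, where it gives bijectivity of $\kappa|_{\cW^0}$ on tangent spaces. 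Your instinct that \ref{ddiii} is the linchpin forcing source and target blow-ups to match is correct, but it acts through \ref{tv}, not directly through the differential of $\whmu_{d,e}^{\,u}$.
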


\begin{proof}
We prove the proposition by induction on $d$ and, when $d$ is fixed, by induction on $u$.  When we use one of the assertions \ref{ti} to \ref{tvii}, it is always thanks to the induction hypothesis (or because we proved it before). We use without comment that previously studied blow-ups are blow-ups with smooth centers in smooth varieties, hence with smooth total space (by~\ref{tii}).

The existence of $\whmu_{d,e}^{\,u}$ as in (\ref{existphi}) is given by \eqref{diagbupcomm}.
Moreover, diagram~\eqref{diagdivexc} shows that for $1\leq w< u$, the restriction of $\whmu_{d,e}^{\,u}$ over
$\beta^{u-1}E^w_{d,e}$ can be identified with $(\whmu_{d-w,e+w}^{\,u-w},\Id)$ which we know to be a closed immersion by~\ref{ti}. As $\whmu_{d,e}^{\,u}$ is clearly injective over the complement of these loci, this shows that~$\whmu_{d,e}^{\,u}$ is injective.

We now prove that $\whmu_{d,e}^{\,u}$ is immersive. By Proposition \ref{immdef2} \ref{ddi}, the morphism~$\whmu_{d,e}^{\,u}$ is immersive over the complement of the divisors $(\beta^{u-1}E^w_{d,e})_{1\leq w< u}$. It remains to show that $\whmu_{d,e}^{\,u}$  is immersive at a point~$x$ such that $\whmu_{d,e}^{\,u}(x)\in\beta^{u-1}E^w_{d,e}$ for some~${1\leq w< u}$. Fix $\xi\in T_x(P_{d-u}\times \whP_{u,e-d+u})$ with $d\whmu_{d,e}^{\,u}(\xi)=0$. Then~${\xi\in T_x((\whmu_{d,e}^{\,u})^{-1}(\beta^{u-1}E^w_{d,e}))}$, where the tangent space is meant in the schematic sense. By \eqref{diagdivexc}, the restriction
$$\whmu_{d,e}^{\,u}|_{(\whmu_{d,e}^{\; u})^{-1}(\beta^{u-1}E^w_{d,e})}:(\whmu_{d,e}^{\,u})^{-1}(\beta^{u-1}E^w_{d,e})\to \beta^{u-1}E^w_{d,e}$$
can be identified with 
$$P_{d-u}\times \whP_{u-w,e-d+u+w}\times \whP_{w,e-d+w}
\xrightarrow{(\whmu^{\; u-w}_{d-w,e+w},\Id)}\beta^{u-w-1}P_{d-w,e+w}\times \whP_{w,e-d+w},$$
which is immersive by \ref{ti}. This shows that $\xi=0$ and concludes.

Consider the normal bundle of $\whmu_{d,e}^{\,u}$. Proposition~\ref{immdef2}~\ref{ddii} and the behaviour of normal bundles under smooth blow-ups (see \eg \cite[B.6.10]{Fulton}) show that
its restriction to $(\Id,\beta^{u-1})^{-1}(\Omega_{d,e}^u)\subset P_{d-u}\times \whP_{u,e-d+u}$, where~$\Omega_{d,e}^u$ is the open subset of $P_{d-u}\times P_{u,e-d+u}$ defined in Proposition~\ref{immdef2} \ref{ddi}, is isomorphic to 
$$\Big(p_1^*\mathcal{E}_{d-u,e+u}(1)\otimes p_2^*\big(\cO_{\whP_{u,e-d+u}}(1,1)(-\sum_{w=1}^{u-1}\whE_{u,e-d+u}^{w})\big)\Big)|_{\Omega_{d,e}^u}.$$
Since the complement of $(\Id,\beta^{u-1})^{-1}(\Omega_{d,e}^u)$ in $P_{d-u}\times \whP_{u,e-d+u}$ has codimension~$\geq 2$ and since $P_{d-u}\times \whP_{u,e-d+u}$ is a smooth variety, this isomorphism over $\Omega_{d,e}^u$ extends to an isomorphism on all of $P_{d-u}\times \whP_{u,e-d+u}$. This ends the proof of \ref{ti}.

The variety $\beta^u P_{d,e}$ is the blow-up of $\beta^{u-1}W_{d,e}^u$ (isomorphic to $P_{d-u}\times \whP_{u,e-d+u}$ by~\ref{ti}, hence smooth) in the smooth variety $\beta^{u-1}P_{d,e}$. 
The normal bundle computation in \ref{ti} yields the required description of the exceptional divisor, proving ~\ref{tii}. 
Remembering how exactly this normal bundle was identified, in restriction to an appropriate open subset of $\beta^{u-1}P_{d,e}$, with $p_1^*\mathcal{E}_{d-u,e+u}(1)\otimes p_2^*(\cO_{\whP_{u, e-d+u}}(1,1))$ (see the proof of Proposition \ref{immdef1} \ref{dii} and especially \eqref{keyidentification}) implies assertion \ref{tiii}.

Using \ref{tiv} by induction shows that the $(\beta^{u-1} E_{d,e}^w)_{1\leq w< u}$
form a strict normal crossings divisor in $\beta^{u-1}P_{d,e}$. In addition, diagram \eqref{diagdivexc} shows that the inverse images of these divisors in $P_{d-u}\times \whP_{u, e-d+u}$ by the closed immersion \eqref{existphi} are the~$(P_{d-u} \times\whE_{u,e-d+u}^{w})_{1\leq w< u}$, which form a strict normal crossings divisor in~$P_{d-u}\times \whP_{u, e-d+u}$ (apply \ref{tiv} again by induction). This implies that the divisors~$(\beta^{u} E_{d,e}^w)_{1\leq w\leq u}$ form a strict normal crossings divisor and proves \ref{tiv}.

All the maps in \eqref{diagmultmap} are well-defined, by~\ref{tvi}. Diagram \eqref{diagmultmap} is commutative as its restriction to the dense open subset $P_{d-v}\times P_{v-u}\times U_{u,e-d+u}$ of~$\cW$ commutes. Consider the fiber product $\mathcal{V}:=\mathcal{X}\times_{\mathcal{Z}}\mathcal{Y}$. By \ref{ti}, the morphisms~${\whmu_{d,e}^{\,u}:\cY\to\cZ}$ and~${(\Id, \whmu_{v,e-d+v}^{\,u}):\cW\to\cX}$ are closed immersions. As a consequence, so is the morphism~${\kappa:\mathcal{W}\to\mathcal{V}}$ induced by~\eqref{diagmultmap}. Let $\cZ^0\subset \cZ$ be the complement of the divisors $(\beta^{u-1}E^w_{d,e})_{1\leq w<u}$. Let~$\cX^0$, $\cY^0$, $\cW^0$ and $\cV^0$ be the inverse images of~$\cZ^0$ in~$\cX$,~$\cY$,~$\cW$ and $\cV$ respectively. The concrete descriptions of these spaces show that the closed immersion $\kappa|_{\cW^0}:\cW^0\to\cV^0$ is bijective. By Proposition~\ref{immdef2}~\ref{ddiii}, it also induces bijections at the level of tangent spaces. As~$\cW^0$ is smooth, this implies that $\kappa|_{\cW^0}$ is an isomorphism. 

We now prove the last assertion of \ref{tv}. Fix $z\in\whmu_{d,e}^{\,u}(\cY)\subset\cZ$. As~${(\lambda,\Id):\cW\to\cY}$ is a finite surjective morphism between smooth varieties, it is flat by \cite[Theorem~23.1]{Matsumura}. Its degree is $\binom{d-u}{d-v}$, as the degree of a general fiber of $(\lambda,\Id)$ is the number of degree~$d-v$ factors (up to a scalar) of a general degree~$d-u$ polynomial.
 As~$\kappa|_{\cW^0}$ is an isomorphism, this implies the last assertion of~\ref{tv} when~$z\in \cZ^0$. To conclude, it remains to deal with the case where $z\in \beta^{u-1}E^w_{d,e}\subset\cZ$ for some~$1\leq w<u$. By~\ref{tvii}, the restriction of~$\beta^{u-1}\mu_{d,e}^v:\cX\to\cZ$ over~$\beta^{u-1}E^w_{d,e}$ can be identified with $(\beta^{u-w-1}\mu^{v-w}_{d-w,e+w},\Id)$. Since $z\in\whmu^{\,u}_{d,e}(\cY)$, another application of \ref{tvii} shows that $z$ belongs to the subset of~$\beta^{u-w-1}P_{d-w,e+w}\times \whP_{w,e-d+w}$ that is the image of~$(\whmu^{\,u-w}_{d-w,e+w}, \Id)$. The inverse image of $z$ by $(\beta^{u-w-1}\mu^{v-w}_{d-w,e+w},\Id)$ therefore has degree~$\binom{(d-w)-(u-w)}{(d-w)-(v-w)}=\binom{d-u}{d-v}$ by induction.

To complete the proof of \ref{tv}, we show that \eqref{diagmultmap} is cartesian, \ie that the closed immersion~$\kappa:\cW\to \cV$ is an isomorphism. Let $\nu:\cV\to\cY$ be the finite morphism induced by \eqref{diagmultmap}. Consider the surjective morphism of sheaves
\begin{equation}
\label{surjectionpf}
\nu_*\cO_{\cV}\to\nu_*\cO_{\cW}
\end{equation}
obtained by pushing forward by $\nu$ the surjection $\cO_{\cV}\to\cO_{\cW}$. The sheaf $\nu_*\cO_{\cW}$ is locally free of rank~$\binom{d-u}{d-v}$ because ${(\lambda,\Id):\cW\to\cY}$ is finite flat of degree ~$\binom{d-u}{d-v}$ (as we saw above). The sheaf~$\nu_*\cO_{\cV}$ is also locally free of rank $\binom{d-u}{d-v}$ because all its fibers have rank $\binom{d-u}{d-v}$ by the last assertion of \ref{tv}, and because $\cY$ is reduced. It now follows from Nakayama's lemma that the surjection \eqref{surjectionpf} is an isomorphism, and we deduce that $\kappa:\cW\to\cV$ is an isomorphism. 

The commutative diagram \eqref{diagbupcomm} is constructed from diagram \eqref{diagmultmap} by blowing up the vertical closed immersions. This proves \ref{tvi}. 

The commutative diagram \eqref{diagdivexc} for $w=u$ is the restriction of \eqref{diagbupcomm} to the exceptional divisors of the blow-ups. It is cartesian because so is \eqref{diagmultmap}. The description of the exceptional divisors follows from \ref{tii}. To show that the map induced by $\beta^u\mu_{d,e}^v$ between the exceptional divisors is $(\mu_{d-u,e+u}^{v-u},\Id)$, we compute it at a general point $(\lll H\rr, [A,BG-CF],[B,C])$ of $P_{d-v}\times P_{v-u,e-d+v+u}\times\whP_{u,e-d+u}$ (see~\ref{tiii}). By assertion \ref{tiii}, the image of this point by the map induced by $\beta^u\mu_{d,e}^v$~is 
$$\lim_{t\to 0}\mu_{d,e}^v(\lll H\rr, [AB+tF, AC+tG])=\lim_{t\to 0}\,[ABH+tFH, ACH+tGH],$$
which is identified, by means of \ref{tiii}, to 
$$([AH, BGH-CFH],[B,C])=(\mu_{d-u,e+u}^{v-u},\Id)(\lll H\rr, [A,BG-CF],[B,C]).$$

It remains to prove \ref{tvii} when $w<u$. Apply diagram \eqref{diagdivexc} to~$(u-1,v,w)$ instead of~$(u,v,w)$. The restrictions to the four spaces of this diagram of the image of the closed immersion $\whmu_{d,e}^{\,u}:P_{d-u}\times \beta^{u-1}P_{u, e-d+u}\to \beta^{u-1}P_{d,e}$ can be computed using \ref{tv} and \ref{tvii}. Blowing up these loci yields the desired commutative diagram~\eqref{diagdivexc}. It is cartesian because the inverse image in ${P_{d-v}\times \beta^uP_{v,e-d+v}}$ of the Cartier divisor $\beta^uE_{d,e}^w\subset\beta^uP_{d,e}$ is a Cartier divisor which equals~${P_{d-v}\times \beta^uE^w_{v, e-d+v}}$ away from a codimension $\geq 2$ subset, and hence which coincides with it
\end{proof}

\begin{rems}
\label{blowupremarks}
(i) Let us spell out what Proposition \ref{blowuprec} tells us about the blow-up tower $\whP_{d,e}\to P_{d,e}$. Fix $1\leq u<d$. During the first $u-1$ blow-ups, the source of the multiplication morphism $\mu_{d,e}^u:P_{d-u}\times P_{u,e-d+u}\to P_{d,e}$ undergoes the very same blow-up process, on the second factor (that is $P_{d-u}\times \whP_{u,e-d+u}\to P_{d-u}\times P_{u,e-d+u}$). This has the effect of turning this multiplication map into an embedding, hence to resolve the singularities of its image which becomes isomorphic to $P_{d-u}\times \whP_{u,e-d+u}$. This image is then blown up, and the exceptional divisor $E_{d,e}^u$ identifies with the projective bundle $P_{d-u, e+u}\times \whP_{u,e-d+u}$ over it. During the last $d-u-1$ blow-ups, this exceptional divisor undergoes the very same blow-up process, on the first factor (that is $\whP_{d-u, e+u}\times \whP_{u,e-d+u}\to P_{d-u, e+u}\times \whP_{u,e-d+u}$). The resulting divisor $\whE_{d,e}^u$ is therefore isomorphic to $\whP_{d-u, e+u}\times \whP_{u,e-d+u}$.

(ii) When $d\geq 2$, it follows from Proposition \ref{blowuprec} \ref{tii} that the last blow\nobreakdash-up $\alpha_{d,e}^{d-1}:\whP_{d,e}=\beta^{d-1}P_{d,e}\to\beta^{d-2}P_{d,e}$ is the blow-up of a smooth
divisor, hence an isomorphism. It was therefore not necessary to blow up the strict transform of~$W_{d,e}^{d-1}$ to construct~$\whP_{d,e}$. However, it is important to take into account and study its exceptional divisor $\whE_{d,e}^{\,d-1}\subset \whP_{d,e}$, that is the strict transform of the discriminant.

(iii) The inductive nature of Proposition \ref{blowuprec} and its proof makes it necessary to take into account, on an equal footing, the slightly degenerate $d=1$ case.
\end{rems}

\subsection{Proof of Theorem \ref{blowup}}

Assertions \ref{bi} and \ref{bii} are particular cases of Proposition \ref{blowuprec}~\ref{tii} and \ref{tiv}. 

To prove \ref{biii}, \ref{biv} and \ref{bv}, we use implicitly that the strict transforms and the pullbacks of the divisors that we consider under the various blow-ups that appear always coincide, because they coincide away from a subset of codimension $\geq 2$. 

For~$v<u$, the restriction $(\beta^{u-1}E_{d,e}^v)|_{\beta^{u-1}W_{d,e}^u}$ is isomorphic to $q_2^*\whE^v_{u,e-d+u}$ (where $q_2:\beta^{u-1}W_{d,e}^u\to \whP^v_{u,e-d+u}$ is the second projection induced by the isomorphism \eqref{existphi}), because of \eqref{diagdivexc}. Pulling back this isomorphism to $\whE_{d,e}^u$ proves~\ref{biii}.

For $v>u$, diagram \eqref{diagdivexc} shows that the restriction of the subscheme $\beta^{v-1}W_{d,e}^v$ of $\beta^{v-1}P_{d,e}$ to $\beta^{v-1}E_{d,e}^u$  is isomorphic to $(q_1)^{-1}(\beta^{v-u-1}W_{d-u,e+u}^{v-u})$ (where we let ${q_1:\beta^{v-1}E_{d,e}^u\to \beta^{v-u-1}P_{d-u,e+u}}$ be the first projection induced by the bottom right equality in \eqref{diagdivexc}). Pulling back this isomorphism to $\whE_{d,e}^u$ proves~\ref{biv}.

The exceptional divisor $E_{d,e}^u\subset \beta^{u}P_{d,e}$ is the projectivization of the normal bundle of $\beta^{u-1}W_{d,e}^u$ in $\beta^{u-1}P_{d,e}$ which, by Proposition \ref{blowuprec} \ref{ti}, is isomorphic to 
\begin{equation}
\label{normaltwist}
q_1^*\mathcal{E}_{d-u,e+u}\otimes\Big(q_1^*\cO_{P_{d-u}}(1)\otimes q_2^*\big(\cO_{\whP_{u,e-d+u}}(1,1)(-\sum_{w=1}^{u-1}\whE_{u,e-d+u}^{w})\big)\Big)
\end{equation}
(where $q_1:\beta^{u-1}W_{d,e}^u\to P_{d-u}$ and $q_2:\beta^{u-1}W_{d,e}^u\to\whP_{u,e-d+u}$ are the two projections in~\eqref{existphi}). The line bundle $\cO_{\beta^uP_{d,e}}(E_{d,e}^u)|_{E_{d,e}^u}$ is the antiample tautological line bundle of this projective bundle. As the antiample tautological line bundle of the (isomorphic) projectivization of $q_1^*\mathcal{E}_{d-u,e+u}$ is by definition the line bundle~$(q'_1)^*\cO_{P_{d-u,e+u}}(0,-1)$, we deduce from~\eqref{normaltwist} and \cite[II, Lemma 7.9]{Hartshorne} that 
 $$\cO_{\beta^uP_{d,e}}(E_{d,e}^u)|_{E_{d,e}^u}\simeq (q'_1)^*\cO_{P_{d-u,e+u}}(1,-1)\otimes (q'_2)^*\cO_{\whP_{u,e-d+u}}(1,1)(-\sum_{w=1}^{u-1}\whE_{u,e-d+u}^{w})$$
 (where $q'_1:E_{d,e}^u\to P_{d-u,e+u}$ and $q'_2:E_{d,e}^u\to \whP_{u, e-d+u}$ are the projections induced by Proposition \ref{blowuprec} \ref{tii}).
 Pulling back this isomorphism to $\whE_{d,e}^u$ proves~\ref{bv}.

Finally, the definition of $\mu_{d,e}^u:P_{d-u}\times P_{u,e-d+u}\to P_{d,e}$ given in \eqref{defphi} implies that~$(\mu_{d,e}^u)^*\cO_{P_{d,e}}(l,m)\simeq q_1^*\cO_{P_{d-u}}(l+m)\otimes q_2^*\cO_{P_{u, e-d+u}}(l,m)$ (where $q_1$ and $q_2$ are the projections of $P_{d-u}\times P_{u,e-d+u}$ onto its factors). Pulling back this isomorphism to $\whE_{d,e}^u$ computes $\cO_{\whP_{d,e}}(l,m)|_{\whE_{d,e}^u}$ and proves \ref{bvi}.
\qed

\section{The multigraded Hilbert scheme}
\label{secmultigraded}

In this section, we give an interpretation of $\whP_{d,e}$ as a mutigraded Hilbert scheme in the sense of \cite{multigraded} (see Theorem \ref{thHilb}).

\subsection{The variety \texorpdfstring{$\Hilb_{d,e}$}{Hilbde}}
\label{parmultigraded}

Let $\mathbb{G}_m$ act on $\A^2_k$ by homotheties.
Let $Z\subset\A^2_k$ be a~$\mathbb{G}_m$\nobreakdash-invariant closed subscheme. 
The Hilbert function $l\mapsto \HF_Z(l)$ of $Z$ associates with~$l\in\Z$ the dimension of the subspace of $V_l$ consisting of equations of~$Z$. Our convention differs from~\cite{multigraded}, which considers the dimension of the quotient (it is more convenient for us to manipulate equations of~$Z$ rather than functions on~$Z$).

For $1\leq d<e$, we consider the function $l\mapsto \HF_{d,e}(l)$ defined by:
\begin{equation}
\label{defHF}
\HF_{d,e}(l)=\left\{
    \begin{array}{llll}
       0&\textrm{ if } \hspace{0.3em} l\leq d-1, \\
       l+1-d&\textrm{ if } \hspace{0.3em}d\leq l\leq e-1, \\
       2l+2-d-e&\textrm{ if } \hspace{0.3em}e\leq l\leq d+e-1, \\
       l+1&\textrm{ if } \hspace{0.3em}d+e\leq l. 
    \end{array}
\right.
\end{equation}

\begin{lem}
\label{HF}
Fix $1\leq d<e$. Choose $[F,G]\in U_{d,e}$. Set $Z:=\{F=G=0\}\subset\A^2_k$. 
Then $\HF_Z(l)=\HF_{d,e}(l)$ for all $l\in\Z$.
\end{lem}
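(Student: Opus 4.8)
The plan is to compute $\HF_Z(l)$ directly from the ideal defining $Z$. Since $Z=\{F=G=0\}$ is a closed subscheme of the \emph{affine} space $\A^2_k$, its ideal is exactly $I:=(F,G)\subset k[x,y]$ (on affine space there is no saturation to worry about), so by definition $\HF_Z(l)$ is the dimension of the degree $l$ homogeneous part $I_l=F\cdot V_{l-d}+G\cdot V_{l-e}\subset V_l$. Everything thus reduces to computing $\dim_k I_l$ as a function of $l$.

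To do so, I would use the coprimality of $F$ and $G$, which is precisely the hypothesis $[F,G]\in U_{d,e}$. Multiplication by $F$ (resp.\ $G$) is injective, so $\dim(F\cdot V_{l-d})=\dim V_{l-d}$ and $\dim(G\cdot V_{l-e})=\dim V_{l-e}$. For the intersection, suppose $FA=GB$ with $A\in V_{l-d}$ and $B\in V_{l-e}$; since $\gcd(F,G)=1$ and $k[x,y]$ is a UFD, $F$ divides $B$, say $B=FB'$ with $B'\in V_{l-d-e}$, and then the common element equals $FGB'$. Conversely every element of $FG\cdot V_{l-d-e}$ lies in both subspaces, so $F\cdot V_{l-d}\cap G\cdot V_{l-e}=FG\cdot V_{l-d-e}$. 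The inclusion–exclusion formula for the dimension of a sum of two subspaces then gives
\[
\HF_Z(l)=\dim I_l=\dim V_{l-d}+\dim V_{l-e}-\dim V_{l-d-e}.
\]
Equivalently, coprimality means $F,G$ form a regular sequence in the Cohen–Macaulay ring $k[x,y]$, and the associated Koszul resolution of $k[x,y]/I$ yields the same alternating sum; this is the same mechanism as in Lemma \ref{Koszullemma}.

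It then remains to plug in $\dim V_m=\max(m+1,0)$ and carry out the elementary case analysis according to the position of $l$ relative to $d$, $e$ and $d+e$. For $l\leq d-1$ all three terms vanish; for $d\leq l\leq e-1$ only the first survives, giving $l+1-d$; for $e\leq l\leq d+e-1$ the first two survive, giving $2l+2-d-e$; and for $l\geq d+e$ all three contribute, giving $(l-d+1)+(l-e+1)-(l-d-e+1)=l+1$. These four values match \eqref{defHF} exactly. The only genuine content is the intersection computation $F\cdot V_{l-d}\cap G\cdot V_{l-e}=FG\cdot V_{l-d-e}$, which is where coprimality enters and which I expect to be the one step requiring care; the remainder is bookkeeping.
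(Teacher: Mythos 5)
Your proof is correct and is essentially the paper's argument: the paper takes global sections of the Koszul exact sequence of the regular sequence $(F,G)$ to get $\HF_Z(l)=\dim V_{l-d}+\dim V_{l-e}-\dim V_{l-d-e}$, and your inclusion--exclusion with the intersection computation $F\cdot V_{l-d}\cap G\cdot V_{l-e}=FG\cdot V_{l-d-e}$ is exactly the hands-on form of the exactness of that complex, as you yourself note. The case analysis then matches the paper's verbatim.
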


\begin{proof}
As $[F,G]\in U_{d,e}$, one can consider the Koszul exact sequence associated with the regular sequence $(F,G)$ on $\A^2_k$. It reads:
\begin{equation}
\label{Koszulagain}
0\to \cO_{\A^2_k}\xrightarrow{(G,-F)}\cO_{\A^2_k}^{\oplus 2}\xrightarrow{(F,G)}\cO_{\A^2_k}\to\cO_Z\to  0.
\end{equation}
Taking global sections in \eqref{Koszulagain} and retaining the terms of appropriate homogeneous degrees shows that $\HF_Z(l)=\dim_k(V_{l-d})+\dim_k(V_{l-e})-\dim_k(V_{l-d-e})$ for all $l\in\Z$. Computing this function shows that it coincides with $\HF_{d,e}(l)$.
\end{proof}

Let $\Hilb_{d,e}$ be the multigraded Hilbert scheme parameterizing $\mathbb{G}_m$-invariant subschemes 
of~$\A_k^2$ with Hilbert function $\HF_{d,e}$, as defined and constructed in \cite[Theorem 1.1]{multigraded}. We use the same notation for a point of $\Hilb_{d,e}$ and for the associated subscheme~of~$\A^2_k$.
Let $\sigma_{d,e}: U_{d,e}\to \Hilb_{d,e}$ be the morphism associating with~$[F,G]\in U_{d,e}$ the subscheme $\{F=G=0\}\subset\A^2_k$ (see Lemma \ref{HF}).

\begin{prop}
\label{hilbhilb}
Fix $1\leq d<e$. The morphism $\sigma_{d,e}$ realizes $\Hilb_{d,e}$ as a smooth projective compactification of~$U_{d,e}$. There exists a morphism~${\pi_{d,e}: \Hilb_{d,e}\to P_{d,e}}$ that restricts to the identity on $U_{d,e}$.
\end{prop}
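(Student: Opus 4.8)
The plan is to build the retraction $\pi_{d,e}$ explicitly and then, using it together with the smoothness of $\Hilb_{d,e}$, to identify $U_{d,e}$ with the open set $\pi_{d,e}^{-1}(U_{d,e})$. First I would construct $\pi_{d,e}$ from the universal equations. Over $\Hilb_{d,e}$ the universal subscheme yields, for each $l$, a rank-$\HF_{d,e}(l)$ subbundle $\cI_l\subset V_l\otimes\cO_{\Hilb_{d,e}}$ of degree-$l$ equations. Since $\HF_{d,e}(d)=1$, the bundle $\cI_d$ is a line subbundle of $V_d\otimes\cO_{\Hilb_{d,e}}$, which defines a morphism $\bar\pi\colon\Hilb_{d,e}\to P_d$ with $\cI_d\simeq\bar\pi^*\cO_{P_d}(-1)$. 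Pulling back the exact sequence \eqref{sesqp} along $\bar\pi$ exhibits the rank-$(e+1-d)$ subbundle of multiples of the universal degree-$d$ equation inside $V_e\otimes\cO_{\Hilb_{d,e}}$; because an ideal is stable under multiplication, this subbundle is contained fibrewise, hence as a subbundle, in $\cI_e$ (whose rank is $\HF_{d,e}(e)=e+2-d$). The resulting rank-$1$ quotient is a line subbundle of $\bar\pi^*\cE_{d,e}$, and therefore defines the sought-after lift $\pi_{d,e}\colon\Hilb_{d,e}\to P_{d,e}$. By construction $\pi_{d,e}$ sends $\{F=G=0\}$ to $[F,G]$, so $\pi_{d,e}\circ\sigma_{d,e}=\mathrm{id}_{U_{d,e}}$.

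The key structural observation is that $\pi_{d,e}$ is already an isomorphism over $U_{d,e}$. Indeed, if $Z\in\Hilb_{d,e}$ has $\pi_{d,e}(Z)=[F,G]\in U_{d,e}$, then $F$ and $G$ lie in the ideal $I$ of $Z$, so the complete intersection ideal $(F,G)$ is contained in $I$; as $(F,G)$ has Hilbert function $\HF_{d,e}$ by Lemma \ref{HF}, which is also the Hilbert function of $I$, the inclusion of graded ideals $(F,G)\subset I$ is an equality. Hence $Z=\sigma_{d,e}([F,G])$ and $\pi_{d,e}^{-1}(U_{d,e})=\sigma_{d,e}(U_{d,e})$ as sets.

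To upgrade this to an isomorphism I would use smoothness. The variety $\Hilb_{d,e}$ is projective by \cite{multigraded} and the positivity of the grading, and it is smooth because it is a union of connected components (those with Hilbert function $\HF_{d,e}$) of the $\G_m$-fixed locus of $(\A^2_k)^{[de]}$, which is smooth since $(\A^2_k)^{[de]}$ is smooth \cite{Fogarty1} and fixed loci of tori acting on smooth schemes are smooth. In particular $\pi_{d,e}^{-1}(U_{d,e})$ is reduced, so the section $\sigma_{d,e}\colon U_{d,e}\to\pi_{d,e}^{-1}(U_{d,e})$ of the separated morphism induced by $\pi_{d,e}$—automatically a closed immersion—is surjective onto the reduced scheme $\pi_{d,e}^{-1}(U_{d,e})$, hence an isomorphism. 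Thus $\sigma_{d,e}$ is an open immersion onto the smooth open subscheme $\pi_{d,e}^{-1}(U_{d,e})$, of dimension $\dim U_{d,e}=2d-1$.

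The remaining point, and the one I expect to be the main obstacle, is density: I must rule out connected components of $\Hilb_{d,e}$ disjoint from $\sigma_{d,e}(U_{d,e})$. Since $\Hilb_{d,e}$ is smooth, each such component is irreducible of dimension $2d-1$ and lies in the boundary $\pi_{d,e}^{-1}(\Delta_{d,e})$, so it suffices to prove $\dim\pi_{d,e}^{-1}(\Delta_{d,e})\le 2d-2$. The approach I would take is to stratify $\Delta_{d,e}$ by $i:=\deg\gcd(F,G)\in\{1,\dots,d-1\}$: the stratum $W_{d,e}^{d-i}\setminus W_{d,e}^{d-i-1}$ is dominated by $\mu_{d,e}^{d-i}$, hence has dimension at most $\dim(P_i\times P_{d-i,e-i})=2d-i-1$, so it is enough to bound the fibres of $\pi_{d,e}$ over it by $i-1$. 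Establishing this fibre bound—understanding exactly how a finite $\G_m$-invariant subscheme with Hilbert function $\HF_{d,e}$ can be supported on the non-reduced complete intersection $\{F=G=0\}$ once $F$ and $G$ acquire a common factor of degree $i$—is the hard part, and it is precisely the delicate boundary analysis that the subsequent detailed study of $\Hilb_{d,e}$ (culminating in its identification with $\whP_{d,e}$) is designed to supply.
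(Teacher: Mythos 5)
Your construction of $\pi_{d,e}$ from the universal subbundles of equations, and your identification $\pi_{d,e}^{-1}(U_{d,e})=\sigma_{d,e}(U_{d,e})$ via the Hilbert function comparison of Lemma \ref{HF}, are exactly right and match the paper. The smoothness argument via the $\G_m$-fixed locus of $(\A^2_k)^{[de]}$ is a reasonable substitute for the paper's citation (it implicitly uses the identification of $\Hilb_{d,e}$ with a union of components of that fixed locus, cf.\ Remark \ref{remHilb}). The problem is the step you yourself flag as open: it is a genuine gap, and the paper does not close it the way you propose. The paper simply quotes that the multigraded Hilbert scheme of $k[x,y]$ is smooth \emph{and connected} (\cite[Theorem~1]{Evain} or \cite[Theorem~1.1]{SmithMaclagan}); connectedness plus smoothness gives irreducibility, so the nonempty open set $\pi_{d,e}^{-1}(U_{d,e})$ is automatically dense and the proof ends there. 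Without importing that connectedness statement (or reproving it), your argument is incomplete.

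Moreover, your proposed fallback has a second flaw independent of the unproven fibre bound: smoothness of $\Hilb_{d,e}$ does \emph{not} imply that every connected component has dimension $2d-1$ (distinct components of a smooth scheme may have different dimensions, and there is no a priori deformation-theoretic lower bound of $2d-1$ here), so even a bound $\dim\pi_{d,e}^{-1}(\Delta_{d,e})\leq 2d-2$ would not by itself exclude a low-dimensional component sitting entirely over $\Delta_{d,e}$. If you want to avoid citing connectedness, the viable route inside this paper is different: Lemma \ref{excexc2} shows the complement of $U_{d,e}$ is the union of the $D^u_{d,e}$, which are irreducible by induction on $d$ (being images of $\Hilb_{d-u,e+u}\times\Hilb_{u,e-d+u}$), and the limit computation of Lemma \ref{excexc3} exhibits a general point of each $D^u_{d,e}$ as $\lim_{t\to 0}\sigma_{d,e}([AB+tF,AC+tG])$, hence $D^u_{d,e}\subset\overline{\sigma_{d,e}(U_{d,e})}$; this forces $\Hilb_{d,e}=\overline{\sigma_{d,e}(U_{d,e})}$ with no dimension count needed. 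As written, though, your proof establishes only that $\sigma_{d,e}$ is an open immersion with $\pi_{d,e}\circ\sigma_{d,e}=\mathrm{id}$, not that $\Hilb_{d,e}$ is a compactification of $U_{d,e}$.
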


\begin{proof}
The variety $\Hilb_{d,e}$ is projective by \cite[Corollary 1.2]{multigraded} and smooth and connected by
\cite[Theorem 1]{Evain} or \cite[Theorem~1.1]{SmithMaclagan} (these facts appear in \cite[Theorems 2.9 and 3.13]{Iarrobino} with the caveat that Iarrobino ignores nilpotents).

For $Z\in \Hilb_{d,e}$, it follows from \eqref{defHF} that $Z$ satisfies a unique nonzero equation~$F\in V_d$ (up to multiplication by scalar), and a unique nonzero equation~$G\in V_e$ (up to a multiple of~$F$ and multiplication by a scalar). One can therefore define~$\pi_{d,e}: \Hilb_{d,e}\to P_{d,e}$ by the formula $\pi_{d,e}(Z)=[F,G]$.

If $\pi_{d,e}(Z)=[F,G]$ is in $U_{d,e}$, then $Z=\{F=G=0\}$ (because $Z\subset\{F=G=0\}$ and $\HF_Z=\HF_{\{F=G=0\}}$ by Lemma \ref{HF}). The section $\sigma_{d,e}$ of $\pi_{d,e}$ above~$U_{d,e}$ therefore shows that $\pi_{d,e}$ is an isomorphism above~$U_{d,e}$. It follows that $\pi_{d,e}$ is birational and that $\Hilb_{d,e}$ is a smooth compactification of~$U_{d,e}$. 
\end{proof}

\begin{rem}
\label{remHilb}
The subschemes of~$\A^2_k$ parameterized by $\Hilb_{d,e}$ satisfy all the equations of degree $\geq d+e$ by \eqref{defHF}, so they are supported on the origin. We may thus view them as finite subschemes of $\P^2_k$. The induced morphism~${\Hilb_{d,e}\to\Hilb(\P^2_k)}$ is a monomorphism, as one sees from the description of the functors of points of these two schemes, hence a closed immersion by \cite[Corollaire 18.12.6]{EGA44}.
\end{rem}

\subsection{Boundary divisors in \texorpdfstring{$\Hilb_{d,e}$}{Hilbde}}
\label{parboundary}

In this paragraph, we construct and study divisors in $\Hilb_{d,e}$ that will turn out to correspond to the divisors $\whE_{d,e}^u$ in $\whP_{d,e}$.

\begin{lem}
\label{excexc1}
Fix $1\leq u<d<e$.
 There exists an injective morphism $$\psi_{d,e}^u:\Hilb_{d-u,e+u}\times\Hilb_{u,e-d+u}\to\Hilb_{d,e}$$ such that for $Z\in\Hilb_{d-u,e+u}$ and $Z'\in\Hilb_{u,e-d+u}$ with $\pi_{d-u,e+u}(Z)=[F,G]$, the subscheme $\psi_{d,e}^u(Z,Z')$ is defined in $\A^2_k$ by the equations of the form $FK$ for any equation $K$ of $Z'$, and by the equations of degree $\geq e+u$ of $Z$.
\end{lem}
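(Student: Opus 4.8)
The plan is to construct $\psi_{d,e}^u$ functorially, directly at the level of the Hilbert scheme functors, and then verify injectivity. Both $\Hilb_{d,e}$ and its factors represent functors of flat families of $\G_m$-invariant subschemes of $\A^2_k$ with prescribed Hilbert function, so it suffices to describe, for any $k$-scheme $T$, a natural map on $T$-points. Given a $T$-point $(\cZ,\cZ')$ of $\Hilb_{d-u,e+u}\times\Hilb_{u,e-d+u}$, I would produce the family whose ideal sheaf in degree $l$ is spanned by the products $FK$ (with $K$ an equation of $\cZ'$, and $F\in V_d$ the canonical degree-$d$ equation of $\cZ$ coming from $\pi_{d-u,e+u}$) together with the degree-$\geq e+u$ equations of $\cZ$. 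The first task is to check that this prescription is \emph{well defined}: the degree-$d$ equation $F$ of $\cZ$ is only determined up to scalar (it is a line subbundle of $V_d\otimes\cO_T$, by the same argument as in the proof of Proposition \ref{hilbhilb}, since $\HF_{d-u,e+u}(d)=1$), so I must confirm the span of the ideal does not depend on the choice of $F$; scaling $F$ rescales each $FK$ and so leaves the span unchanged.

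Next I would verify that the subscheme so defined is flat over $T$ with the correct Hilbert function $\HF_{d,e}$; by the representability of the multigraded Hilbert functor \cite[Theorem 1.1]{multigraded}, flatness plus the constant Hilbert function is exactly what yields a $T$-point of $\Hilb_{d,e}$, hence the morphism $\psi_{d,e}^u$. The cleanest way to pin down the Hilbert function is fibrewise: over a geometric point, the ideal is generated by one degree-$d$ element $F$ and the ideal $\langle F\rangle\cdot(\text{equations of }Z')$ in degrees $\leq e+u-1$, completed by all equations of $Z$ in degrees $\geq e+u$. A short bookkeeping computation in each degree range of \eqref{defHF}, combined with the Hilbert functions $\HF_{d-u,e+u}$ and $\HF_{u,e-d+u}$, should reproduce $\HF_{d,e}(l)$ exactly; the regular-sequence structure underlying Lemma \ref{HF} is what makes the counts in the overlapping range $e\leq l\leq d+e-1$ work out. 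The geometric meaning is transparent: $\psi_{d,e}^u(Z,Z')$ is the $\G_m$-invariant subscheme cut out by $F$ and by $F\cdot(\text{equations of }Z')$, i.e.\ the subscheme associated with the pair $[F,FG']$ where $G'$ comes from $Z'$, matching the multiplication map $\mu_{d,e}^u$.

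For injectivity, I would recover $(Z,Z')$ from $\psi_{d,e}^u(Z,Z')$. The degree-$d$ equation $F$ is determined (up to scalar) as the unique lowest-degree equation, and then $Z$ is recovered as the subscheme whose equations in degrees $\geq e+u$ are those of $\psi_{d,e}^u(Z,Z')$ in that range — this reconstructs $Z\in\Hilb_{d-u,e+u}$ since its Hilbert function forces it to be determined by its high-degree equations — while $Z'$ is recovered by dividing the remaining equations by $F$. Making ``recover $Z$'' precise is the point requiring the most care, as one must argue that the high-degree part of the ideal of $\psi_{d,e}^u(Z,Z')$ genuinely coincides with that of $Z$ and not merely contains it; I expect the Hilbert-function bookkeeping from the previous step to close this gap, since the dimension counts leave no room for extra equations. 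I anticipate the \textbf{main obstacle} to be the flatness/Hilbert-function verification in families (as opposed to on fibres), where one cannot simply count dimensions pointwise but must exhibit the family as flat — most efficiently by presenting it via an explicit exact sequence of the Koszul type \eqref{Koszulagain} relativised over $T$, or by invoking that a family of ideals with locally constant Hilbert function over a reduced base is automatically flat.
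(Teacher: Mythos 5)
Your overall plan coincides with the paper's (define the ideal by the stated recipe, verify the Hilbert function degree by degree, then reconstruct $(Z,Z')$ to get injectivity), but two concrete errors break the argument as written. First, the degrees: since $\pi_{d-u,e+u}(Z)=[F,G]$ with $Z\in\Hilb_{d-u,e+u}$, the polynomial $F$ lies in $V_{d-u}$, not in $V_d$, and the relevant uniqueness statement is $\HF_{d-u,e+u}(d-u)=1$ (one computes $\HF_{d-u,e+u}(d)=u+1$ from \eqref{defHF}). With your convention $F\in V_d$, the products $FK$ would start in degree $d+u$, so the ideal you describe would have no equation in degree $d$ and could not have Hilbert function $\HF_{d,e}$, which requires $\HF_{d,e}(d)=1$. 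The recipe only produces a point of $\Hilb_{d,e}$ with $\deg F=d-u$.

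Second, and more seriously, your injectivity step fails: the unique lowest-degree equation of $\psi_{d,e}^u(Z,Z')$ sits in degree $d$ and equals $F\cdot B$, where $B\in V_u$ is the lowest-degree equation of $Z'$ --- it is not $F$. Dividing the remaining equations of degree $<e+u$ by this element does not recover $Z'$, since those equations are divisible by $F$ but not by $FB$ in general. The paper instead recovers $F$ as the greatest common divisor of \emph{all} equations of $\psi_{d,e}^u(Z,Z')$ of degrees $<e+u$; this works because these equations are exactly $F$ times the equations of $Z'$ of degrees $<e-d+2u$, and in the top degree $e-d+2u-1$ the equations of $Z'$ fill all of $V_{e-d+2u-1}$ (indeed $\HF_{u,e-d+u}(e-d+2u-1)=e-d+2u=\dim V_{e-d+2u-1}$), so their gcd is $1$. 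Relatedly, $Z$ is not recovered from its degree-$\geq e+u$ equations alone, as you suggest, but from those \emph{together with} $F$: the Hilbert function forces the equations of $Z$ in degrees $d-u\leq l\leq e+u-1$ to be exactly the multiples of $F$. Your remaining points are fine: scaling-independence is immediate, and the flatness issue you flag as the main obstacle is essentially absorbed by the definition of the multigraded Hilbert functor (a $T$-point is a homogeneous ideal whose graded pieces are subbundles of the prescribed ranks), which is why the paper contents itself with the fibrewise count.
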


\begin{proof}
Let us first show that the morphism $\psi_{d,e}^u$ is well-defined. Fix $Z\in\Hilb_{d-u,e+u}$ and ${Z'\in\Hilb_{u,e-d+u}}$ with $\pi_{d-u,e+u}(Z)=[F,G]$.

Let $Y\subset \A^2_k$ be the subscheme defined by the equations of the form $FK$ for any equation $K$ of $Z'$. It is easy to describe $\HF_Y$ from $\HF_{Z'}$ (that is known by Lemma~\ref{HF}).
Since $\psi_{d,e}^u(Z,Z')$ is defined in $Y$ by additional equations of degrees~${\geq e+u}$, it follows that $\HF_{\psi_{d,e}^u(Z,Z')}(l)$ coincides with $\HF_{d,e}(l)$
for $l<e+u$. 

Moreover, again by Lemma \ref{HF}, the equations of degrees $\geq e+u$ of $Y$ are exactly the multiples of $F$, hence are also equations of~$Z$. It follows that the equations of degrees $\geq e+u$ of
$Z$ and $\psi_{d,e}^u(Z,Z')$ are the same. As $\HF_Z$ is known by Lemma~\ref{HF}, one checks that 
$\HF_{\psi_{d,e}^u(Z,Z')}(l)$ coincides with $\HF_{d,e}(l)$
for $l\geq e+u$ as well. We have proven that
$\psi_{d,e}^u(Z,Z')\in\Hilb_{d,e}$, hence that $\psi_{d,e}^u(Z,Z')$ is well-defined.

To prove that $\psi_{d,e}^u$ is injective, notice that $F$ is recovered from $\psi_{d,e}^u(Z,Z')$ as the
greatest common divisor of the equations of $\psi_{d,e}^u(Z,Z')$ of degrees $<e+u$, that the equations of $Z'$ are recovered by dividing these equations by $F$, and that the additional equations of $Z$ can also be recovered since they are the equations of~$\psi_{d,e}^u(Z,Z')$ of degrees $\geq e+u$.
\end{proof}

We will show later, in Corollary \ref{coropsi}, that $\psi_{d,e}^u$ is a closed immersion.

For $1\leq u<d<e$, define $D_{d,e}^u\subset \Hilb_{d,e}$ to be the image of $\psi_{d,e}^u$. As $\psi_{d,e}^u$ is injective, a dimension computation shows that $D_{d,e}^u$ is a divisor in $ \Hilb_{d,e}$.

\begin{lem}
\label{excexc2}
Fix $1\leq d<e$.
The irreducible components of the complement of~$U_{d,e}$ in $\Hilb_{d,e}$ are exactly the divisors $(D_{d,e}^u)_{1\leq u<d}$.
\end{lem}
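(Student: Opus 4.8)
The plan is to establish the set-theoretic equality $\Hilb_{d,e}\setminus U_{d,e}=\bigcup_{u=1}^{d-1}D_{d,e}^u$ and then read off the irreducible components from it. When $d=1$ both sides are empty (indeed $\Delta_{1,e}=\varnothing$ and there is no admissible $u$), so I may assume $d\geq 2$.

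\emph{The easy inclusion.} I would first record that $\Hilb_{d,e}\setminus U_{d,e}=\pi_{d,e}^{-1}(\Delta_{d,e})$, since $\pi_{d,e}$ restricts to the identity on $U_{d,e}$ and maps $\Hilb_{d,e}$ to $P_{d,e}=U_{d,e}\sqcup\Delta_{d,e}$ (Proposition \ref{hilbhilb}). For arbitrary $Z_1\in\Hilb_{d-u,e+u}$ and $Z_2\in\Hilb_{u,e-d+u}$ with $\pi_{d-u,e+u}(Z_1)=[A,B]$ and $\pi_{u,e-d+u}(Z_2)=[C,D]$, the scheme $\psi_{d,e}^u(Z_1,Z_2)$ has degree $d$ equation $AC$ and degree $e$ equation $AD$ (Lemma \ref{excexc1}); as $\deg A=d-u\geq 1$ these share a common factor, so $\pi_{d,e}(\psi_{d,e}^u(Z_1,Z_2))\in\Delta_{d,e}$. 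Hence $D_{d,e}^u\subseteq\pi_{d,e}^{-1}(\Delta_{d,e})=\Hilb_{d,e}\setminus U_{d,e}$. Moreover each $D_{d,e}^u$ is irreducible, being the image of the irreducible variety $\Hilb_{d-u,e+u}\times\Hilb_{u,e-d+u}$ (a product of smooth connected varieties by Proposition \ref{hilbhilb}); since $\psi_{d,e}^u$ is injective and $\dim\Hilb_{m,n}=\dim P_{m,n}=2m-1$, its dimension is $(2(d-u)-1)+(2u-1)=2d-2=\dim\Hilb_{d,e}-1$. Thus all the $D_{d,e}^u$ have the same dimension $2d-2$, and they are pairwise distinct because a general point of $D_{d,e}^u$ (take $\gcd(C,D)=1$) maps under $\pi_{d,e}$ to a point of $\Delta_{d,e}$ whose two defining equations have $\gcd$ of degree exactly $d-u$.

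\emph{The reverse inclusion (the crux).} I must show that every $Z\in\Hilb_{d,e}\setminus U_{d,e}$ lies in some $D_{d,e}^u$. Write $\pi_{d,e}(Z)=[F,G]$ and set $A:=\gcd(F,G)$, which is well defined since altering $G$ by a multiple of $F$ does not change it; then $\deg A=d-u$ with $1\leq u\leq d-1$, as $G\neq 0$ in $V_e/\lll F\rr_e$. I would reconstruct a preimage of $Z$ under $\psi_{d,e}^u$ by letting $Z_2$ be the subscheme with homogeneous ideal $(I_Z:A)=\{P\mid AP\in I_Z\}$, and letting $Z_1$ be the subscheme whose ideal is generated by $A$ together with the equations of $Z$ of degree $\geq e+u$. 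The content to verify is that $Z_2\in\Hilb_{u,e-d+u}$, that $Z_1\in\Hilb_{d-u,e+u}$ with $\pi_{d-u,e+u}(Z_1)$ having first coordinate $A$, and that $\psi_{d,e}^u(Z_1,Z_2)=Z$. This verification is the main obstacle, and it reduces to a Hilbert-function computation. In low degrees one checks that $(I_Z:A)_l\cong(A\cdot V_l)\cap(I_Z)_{l+d-u}$ has exactly the dimension prescribed by $\HF_{u,e-d+u}$, using that $(I_Z)_l$ matches $\HF_{d,e}$ and that every equation of $Z$ of degree $\leq e$ is a multiple of $A$ (since $(I_Z)_{\leq e}$ is spanned by $F=AF_1$ and $G=AG_1$ with $F_1\in V_u$, $G_1\in V_{e-d+u}$ and $\gcd(F_1,G_1)=1$). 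The delicate range is $e-d+2u\leq l<e+u$, where one needs $(I_Z:A)_l=V_l$, that is, all degree-$(l+d-u)$ multiples of $A$ must already lie in $I_Z$; here I would exploit that $Z$ is supported at the origin (Remark \ref{remHilb}) together with the Koszul exactness underlying Lemma \ref{HF}. A symmetric computation gives $Z_1\in\Hilb_{d-u,e+u}$, and comparing generators degree by degree against the description of $\psi_{d,e}^u$ in Lemma \ref{excexc1} yields $\psi_{d,e}^u(Z_1,Z_2)=Z$.

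\emph{Conclusion.} Granting both inclusions, $\Hilb_{d,e}\setminus U_{d,e}$ is the union of the finitely many irreducible closed subsets $D_{d,e}^u$, which all have the same dimension $2d-2$ and are pairwise distinct. An irreducible closed set cannot be properly contained in another of the same dimension, so each $D_{d,e}^u$ is maximal among them, hence an irreducible component, and no component is omitted. Therefore the irreducible components of $\Hilb_{d,e}\setminus U_{d,e}$ are exactly the divisors $(D_{d,e}^u)_{1\leq u<d}$.
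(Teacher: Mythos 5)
Your argument is correct and is essentially the paper's own proof: both inclusions are handled the same way, with the reverse one obtained by reconstructing a $\psi_{d,e}^u$-preimage of $Z$ from $A=\gcd(F,G)$ and the equations of $Z$ of degree $\geq e+u$ (the paper takes $Z'=\{B=C=0\}$ where $F=AB$ and $G=AC$, which coincides with your colon ideal $(I_Z:A)$ once the Hilbert-function check is carried out). For the range you flag as delicate, it suffices to observe that $B\in V_u$ and $C\in V_{e-d+u}$ are coprime, hence $\lll B,C\rr_l=V_l$ for $l\geq e-d+2u-1$, so that $AV_l=A\lll B,C\rr_l=FV_{l-u}+GV_{l+d-u-e}\subseteq (I_Z)_{l+d-u}$; no appeal to the support of $Z$ being the origin is needed.
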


\begin{proof}
One verifies that $\pi_{d,e}(\Ima(\psi_{d,e}^u))=W_{d,e}^u$. It follows that the $(D_{d,e}^u)_{1\leq u<d}$ are distinct and do not meet $U_{d,e}$. Let us show conversely that if $Y\in \Hilb_{d,e}\setminus U_{d,e}$, then~$Y$ is included in one of the $(D_{d,e}^u)_{1\leq u<d}$. 

Let $1\leq u<d$ be such that $\pi_{d,e}(Y)\in W_{d,e}^u\setminus W_{d,e}^{u-1}$. Write $\pi_{d,e}(Y)=[AB,AC]$ with $A$ of degree $d-u$. Set $Z':=\{B=C=0\}$. Let $Z$ be the subscheme defined by the vanishing of $A$ and of all the equations of $Y$ of degrees~$\geq e+u$. One checks that $Z\in \Hilb_{d-u,e+u}$, that $Z'\in\Hilb_{u,e-d+u}$ and that~$Y=\psi_{d,e}^u(Z,Z')$, so $Y\in D^u_{d,e}$.
\end{proof}

Let $\rho_{d,e}:\whP_{d,e}\dashrightarrow \Hilb_{d,e}$ be the birational map inducing the identity on $U_{d,e}$.

\begin{lem}
\label{excexc3}
Fix $1\leq u<d<e$.
The birational map $\rho_{d,e}$ sends the generic point of~$\whE^u_{d,e}$ to the generic point of $D^u_{d,e}$ and induces a commutative diagram:
\begin{equation}
\begin{gathered}
\label{diagbexcdivcomm}
\xymatrix@R=1.3em@C=9em{
\whE_{d,e}^u\ar@{-->}[r]^{(\rho_{d,e})|_{\widehat{E}^{\; u}_{d,e}}
}
&   D_{d,e}^u   \\
\whP_{d-u,e+u}\times\whP_{u,e-d+u}\ar@{-->}[r]^{\hspace{0em}(\rho_{d-u,e+u},\,\rho_{u,e-d+u})\hspace{1.5em}}\ar[u]^{\eqref{canoniso}}_{\varphi_{d,e}^u}
&   \Hilb_{d-u,e+u}\times\Hilb_{u,e-d+u}. \ar[u]_{\psi_{d,e}^u}
}
\end{gathered}
\end{equation}
\end{lem}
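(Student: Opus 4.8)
The plan is to verify the commutativity of diagram \eqref{diagbexcdivcomm} by comparing the two rational maps on a dense open subset where all constructions are transparent, and then to deduce the statement about generic points. The starting observation is that $\rho_{d,e}$ is defined as the birational map restricting to the identity on $U_{d,e}$, where points $[F,G]$ are sent to the subscheme $\{F=G=0\}\subset\A^2_k$. Thus the content of the lemma is entirely about how this identification degenerates along the boundary divisor $\whE^u_{d,e}$, and the natural strategy is to track a general one-parameter family approaching the boundary and compute its limit on both sides.

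First I would recall, via Theorem \ref{blowup} \ref{bii} and the explicit description in Proposition \ref{blowuprec} \ref{tiii}, how a general point of $\whE^u_{d,e}$ is realized as a limit. Taking $(A,B,C,F,G)$ general with $\gcd(B,C)=1$, the point $\lim_{t\to 0}[AB+tF,AC+tG]$ equals $([A,BG-CF],[B,C])$ under the identification $\whP_{d-u,e+u}\times\whP_{u,e-d+u}\isoto\whE^u_{d,e}$ of the isomorphism \eqref{canoniso} (note $\varphi^u_{d,e}$ agrees with this identification up to the further blow-ups that are isomorphisms near a general point, by Remark \ref{blowupremarks} (i)). On the Hilbert scheme side, for $t\neq 0$ the point $[AB+tF,AC+tG]$ corresponds to the subscheme $\{AB+tF=AC+tG=0\}$, and I would compute the flat limit of this family of subschemes as $t\to0$. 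The key is that the limiting ideal should be precisely the ideal described in Lemma \ref{excexc1}: generated by the multiples $FK$ (for $K$ an equation of $Z'=\{B=C=0\}$) together with the high-degree equations of $Z=\{A=0, BG-CF=0\}$ — which is exactly $\psi^u_{d,e}$ applied to the pair corresponding to $([A,BG-CF],[B,C])$ via $\rho_{d-u,e+u}$ and $\rho_{u,e-d+u}$.

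The main obstacle will be the flat limit computation on the Hilbert scheme side. One must show that the family of ideals $I_t=(AB+tF,\,AC+tG)$, which for $t\neq0$ cut out complete intersections, degenerates flatly (\ie with constant Hilbert function $\HF_{d,e}$) to the ideal $I_0$ specified by $\psi^u_{d,e}$. The combinatorial heart is matching equations degree by degree: in low degrees the common factor $A$ of the two limiting generators forces the limiting equations to be multiples of $F$ (the equation $\gcd$ recovered as in Lemma \ref{excexc1}), while from the Koszul-type relation $B(AC+tG)-C(AB+tF)=t(BG-CF)$ one extracts, after dividing by $t$, the equation $BG-CF$ of $Z$ governing the higher-degree behaviour. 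This is the concrete mechanism by which the resultant-type syzygy produces the second factor; I expect this to be the step requiring the most care, since one must ensure the limit is computed in the flat (scheme-theoretic) sense and that the Hilbert function stays equal to $\HF_{d,e}$ throughout, so that the limit genuinely lands in $\Hilb_{d,e}$.

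Finally, having matched the two maps on a dense open subset of $\whP_{d-u,e+u}\times\whP_{u,e-d+u}$ (where $\rho_{d-u,e+u}$ and $\rho_{u,e-d+u}$ are defined and the genericity hypotheses of Proposition \ref{blowuprec} \ref{tiii} hold), I would conclude commutativity of \eqref{diagbexcdivcomm} as an identity of rational maps, since two rational maps to a separated scheme agreeing on a dense open subset coincide. The assertion that $\rho_{d,e}$ sends the generic point of $\whE^u_{d,e}$ to the generic point of $D^u_{d,e}$ then follows: the computed general point of the image lies in $D^u_{d,e}=\Ima(\psi^u_{d,e})$ and is general there (the genericity statement in Proposition \ref{blowuprec} \ref{tiii} guarantees that $([A,BG-CF],[B,C])$ ranges over a dense subset), so the image of the generic point is the generic point. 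I would invoke the irreducibility of $\whE^u_{d,e}$ and $D^u_{d,e}$, both isomorphic to products of irreducible varieties by Theorem \ref{blowup} \ref{bii} and Lemma \ref{excexc1}, to make this precise.
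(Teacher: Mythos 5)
Your proposal follows essentially the same route as the paper: realize a general point of $\whE^u_{d,e}$ as $\lim_{t\to0}[AB+tF,AC+tG]$ via Proposition \ref{blowuprec} \ref{tiii}, extract the limiting equations $AB$, $AC$ and $BG-CF$ (the last via the syzygy $B(AC+tG)-C(AB+tF)=t(BG-CF)$), and conclude by comparing with $\psi^u_{d,e}$ using equality of Hilbert functions. The flatness worry you raise is handled automatically since the limit is taken in the projective scheme $\Hilb_{d,e}$ (where the Hilbert function is $\HF_{d,e}$ by definition), with $\rho_{d,e}$ defined away from codimension $2$ by smoothness of $\whP_{d,e}$.
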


\begin{proof}
The rational map $\rho_{d,e}$ is defined on an open subset of $\whP_{d,e}$whose complement has codimension $\geq 2$ in $\whP_{d,e}$ because $\whP_{d,e}$ is smooth and $ \Hilb_{d,e}$
is projective. This locus of definition therefore intersects the divisor $\whE^u_{d,e}$.

Fix~$x=([A,H],[B,C])\in \whP_{d-u,e+u}\times\whP_{u,e-d+u}\simeq \whE_{d,e}^u$ general.  By Proposition~\ref{blowuprec}~\ref{tiii}, we may assume that $H=BG-CF$ for some ${F\in V_{d}}$ and $G\in V_{e}$, and that $\varphi_{d,e}^u(x)=\lim_{t\to 0}\,[AB+tF,AC+tG]$ (where the limit is taken in $\whP_{d,e}$). The element $\rho_{d,e}(\varphi_{d,e}^u(x))$ of $\Hilb_{d,e}$ therefore equals
$$\lim_{t\to 0}\rho_{d,e}([AB+tF,AC+tG]).$$
It follows that the subscheme $\rho_{d,e} (\varphi_{d,e}^u(x))$ of $\A^2_k$ satisfies the equations $AB$ and $AC$, as well as $\lim_{t\to 0} \frac{1}{t}\big(B(AC+tG)-C(AB+tF)\big)=BG-CF=H$. It is therefore included in~$\psi_{d,e}^u(\rho_{d-u,e+u}([A,H]),\rho_{u,e-d+u}([B,C]))$, hence equal to it since it has the same Hilbert function.
\end{proof}

\subsection{Semi-ample line bundles on \texorpdfstring{$\Hilb_{d,e}$}{Hilbde}}
\label{parsemiample}

For $l\in\Z$, let $\cF_{d,e}^l\subset V_l\otimes\cO_{\Hilb_{d,e}}$ be the locally free sheaf of rank $\HF_{d,e}(l)$ on $\Hilb_{d,e}$ whose fiber over $Z\in \Hilb_{d,e}$ is the space of degree~$l$ equations of $Z$.
For $i\in\Z$, define $\cM_{d,e}^i:=\det(\cF_{d,e}^{e+i-1})^{-1}$.

\begin{prop}
\label{Lisemiample}
Fix $1\leq d<e$.
\begin{enumerate}[label=(\roman*)] 
\item
\label{Li}
The line bundle $\cM^i_{d,e}$ on $\Hilb_{d,e}$ is semi-ample for all $i\in\Z$
\item
\label{Lii}
The line bundle $\cM^i_{d,e}$ on $\Hilb_{d,e}$ is trivial for $i\geq d$ or $i\leq d-e$.
\end{enumerate}
\end{prop}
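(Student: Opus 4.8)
The plan is to obtain part \ref{Li} formally, by exhibiting $\cM^i_{d,e}$ as the pullback of an ample line bundle under a morphism to a Grassmannian, and to obtain part \ref{Lii} by direct inspection of the two extreme ranges of $i$, in which the tautological bundle $\cF^{e+i-1}_{d,e}$ degenerates to either the zero sheaf or the full trivial bundle.

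For \ref{Li}, I would set $l:=e+i-1$. The fiber of $\cF^l_{d,e}$ over $Z$ is by definition the honest subspace of degree-$l$ equations of $Z$ inside $V_l$, so the inclusion $\cF^l_{d,e}\hookrightarrow V_l\otimes\cO_{\Hilb_{d,e}}$ is fiberwise injective; hence $\cF^l_{d,e}$ is a sub-bundle, and the universal property of the Grassmannian furnishes a morphism
\[
g_l:\Hilb_{d,e}\to\Grass(\HF_{d,e}(l),V_l)
\]
along which the tautological subbundle $\cS$ pulls back to $\cF^l_{d,e}$. The ample generator of the Picard group of this Grassmannian is the Plücker line bundle $\det(\cS)^{-1}$, namely the pullback of $\cO(1)$ under the Plücker embedding into $\P(\extp^{\HF_{d,e}(l)}V_l)$. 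Therefore $g_l^*(\det(\cS)^{-1})=\det(\cF^l_{d,e})^{-1}=\cM^i_{d,e}$, and since some power of an ample bundle is globally generated and pullback preserves global generation, $\cM^i_{d,e}$ is semi-ample. I note that this is exactly why the definition of $\cM^i_{d,e}$ carries the inverse: the opposite sign would produce the pullback of an anti-ample bundle.

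For \ref{Lii}, everything reduces to evaluating \eqref{defHF} at $l=e+i-1$. If $i\leq d-e$, then $l\leq d-1$, so $\HF_{d,e}(l)=0$, the sheaf $\cF^l_{d,e}$ vanishes, and $\det(\cF^l_{d,e})\simeq\cO_{\Hilb_{d,e}}$. If $i\geq d$, then $l\geq d+e-1$, and a direct computation from \eqref{defHF} gives $\HF_{d,e}(l)=l+1=\dim V_l$ throughout this range (the boundary value $l=d+e-1$ already being full). Consequently the cokernel of $\cF^l_{d,e}\subset V_l\otimes\cO_{\Hilb_{d,e}}$ has vanishing fibers, so $\cF^l_{d,e}=V_l\otimes\cO_{\Hilb_{d,e}}$, whose determinant is again trivial. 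In both cases $\cM^i_{d,e}\simeq\cO_{\Hilb_{d,e}}$.

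I do not expect a serious obstacle: the proposition is essentially formal once $g_l$ is available. The only points needing care are checking that $\cF^l_{d,e}$ is genuinely a sub-bundle (so that $g_l$ exists with $g_l^*\cS=\cF^l_{d,e}$) and fixing the sign convention so that $\cM^i_{d,e}$ is the pullback of the ample, and not the anti-ample, Plücker bundle; the triviality statements are then a routine evaluation of the piecewise-linear function $\HF_{d,e}$.
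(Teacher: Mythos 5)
Your proof is correct and follows essentially the same route as the paper: the morphism $g_l$ to the Grassmannian is exactly the paper's $\chi_{d,e}^l$, semi-ampleness comes from pulling back the ample Pl\"ucker bundle, and triviality is read off from the extreme values of $\HF_{d,e}$ at $l=e+i-1$. No gaps.
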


\begin{proof}
For $l\geq 0$, the sheaf $\cF_{d,e}^l$ is a subsheaf of the constant vector bundle with fiber $V_l$ over $\Hilb_{d,e}$, with a locally free quotient. It therefore induces a morphism
\begin{equation}
\label{defchi}
\chi_{d,e}^l:\Hilb_{d,e}^l\to \Grass\big(\HF_{d,e}(l),V_l\big)
\end{equation}
to the Grassmannian of rank $\HF_{d,e}(l)$ subspaces of $V_l$ (this construction originates from \cite{BS}). The pull-back by $\chi_{d,e}^l$ of the (ample) Pl\"ucker line bundle on the Grassmannian is equal to $\det(\cF_{d,e}^l)^{-1}$, which is therefore semi-ample.

The sheaf $\cF^l_{d,e}$ is zero for $l\leq d-1$ and equal to $V_l\otimes \cO_{\Hilb_{d,e}}$ for $l\geq e+d-1$ (see \eqref{defHF}). In both cases, its determinant is trivial.

Applying these facts with $l=e+i-1$ proves \ref{Li} and \ref{Lii}
\end{proof}

To  compute the line bundle $\cM_{d,e}^i$, we define $\cO_{\Hilb_{d,e}}(l,m):=\pi_{d,e}^*\cO_{P_{d,e}}(l,m)$, where ${\pi_{d,e}: \Hilb_{d,e}\to P_{d,e}}$ is the morphism of Proposition \ref{hilbhilb}. 

\begin{prop}
\label{Licalcul}
For $1\leq d<e$ and $0\leq i\leq d$, one has
\begin{equation}
\label{eqLi}
\cM_{d,e}^i\simeq\cO_{\Hilb_{d,e}}(e-d+i,i)\Big(\sum_{u=1}^{i-1}-(i-u)D_{d,e}^u\Big).
\end{equation}
\end{prop}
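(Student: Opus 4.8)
The plan is to determine $\cM_{d,e}^i=\det(\cF_{d,e}^{\,l})^{-1}$, where $l:=e+i-1$, by computing it first on $U_{d,e}$ and then measuring its order of vanishing along each boundary divisor $D_{d,e}^u$. Over $U_{d,e}$ the pair $(F,G)$ associated with $[F,G]$ is a regular sequence, so the sheaf $\cF_{d,e}^{\,l}$ of degree-$l$ equations fits, by the vector bundle version of the Koszul computation in Lemma~\ref{HF}, into an exact sequence
\begin{equation*}
0\to V_{l-d-e}\otimes\cO(-1,-1)\xrightarrow{(G,-F)} V_{l-d}\otimes\cO(-1,0)\oplus V_{l-e}\otimes\cO(0,-1)\xrightarrow{(F,G)}\cF_{d,e}^{\,l}\to 0
\end{equation*}
of vector bundles on $U_{d,e}$. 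Since $0\le i\le d$ gives $V_{l-d-e}=0$, $\dim V_{l-d}=e-d+i$ and $\dim V_{l-e}=i$, taking determinants yields $\cM_{d,e}^i|_{U_{d,e}}\simeq\cO_{\Hilb_{d,e}}(e-d+i,i)|_{U_{d,e}}$. By Lemma~\ref{excexc2} one has $\Hilb_{d,e}\setminus U_{d,e}=\bigcup_u D_{d,e}^u$, so $\cM_{d,e}^i\simeq\cO_{\Hilb_{d,e}}(e-d+i,i)\big(-\sum_u m_u D_{d,e}^u\big)$ for some integers $m_u\geq 0$, and the proposition reduces to the claim $m_u=(i-u)_+:=\max(i-u,0)$.

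To get at the $m_u$, I would build an explicit section. Fix bases $(m_\alpha)$ of $V_{l-d}$ and $(m'_\beta)$ of $V_{l-e}$, set $r:=\HF_{d,e}(l)$, and put
\begin{equation*}
\omega:=\Big(\bigwedge_\alpha F\,m_\alpha\Big)\wedge\Big(\bigwedge_\beta G\,m'_\beta\Big)\in\extp^r V_l,
\end{equation*}
where $F\in H^0(\Hilb_{d,e},V_d\otimes\cO(1,0))$ is the tautological equation and $G$ denotes any local lift to $V_e\otimes\cO(0,1)$ of the tautological section. Replacing $G$ by $G+HF$ changes each factor $G\,m'_\beta$ by an element of $F\cdot V_{l-d}$, which lies in the span of the first block of the wedge; hence $\omega$ is independent of the lift and defines a global section of $\extp^r V_l\otimes\cO_{\Hilb_{d,e}}(e-d+i,i)$. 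On $U_{d,e}$ the $r$ factors form a frame of $\cF_{d,e}^{\,l}$, so $\omega$ generates the saturated sub-line-bundle $\det(\cF_{d,e}^{\,l})\otimes\cO(e-d+i,i)$ there; as $\cF_{d,e}^{\,l}$ is a subbundle with locally free quotient, $\omega$ is in fact a global section of this sub-line-bundle. Thus $\det(\cF_{d,e}^{\,l})\otimes\cO(e-d+i,i)\simeq\cO_{\Hilb_{d,e}}(\sum_u m_u D_{d,e}^u)$ with $m_u=\mathrm{ord}_{D_{d,e}^u}\omega$, and it remains to compute these orders.

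At a general point of $D_{d,e}^u$ the image under $\pi_{d,e}$ is $[AB,AC]\in W_{d,e}^u$ with $\lll A\rr\in P_{d-u}$ and $[B,C]\in P_{u,e-d+u}$ general, so $\gcd(B,C)=1$ and the factors of $\omega$ span $A\cdot(BV_{l-d}+CV_{l-e})$. Using $\gcd(B,C)=1$, the Koszul relation identifies $BV_{l-d}\cap CV_{l-e}$ with $V_{l-e-u}$, of dimension $(i-u)_+$; hence the factors of $\omega$, independent on $U_{d,e}$, drop rank by exactly $(i-u)_+$ along $D_{d,e}^u$. For $u\ge i$ there is no drop, so $\omega$ is nonzero at the generic point of $D_{d,e}^u$ and $m_u=0$. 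For $u<i$ I would restrict $\omega$ to the transverse arc $t\mapsto[AB+tF,AC+tG]$ of Proposition~\ref{blowuprec}~\ref{tiii} (whose limit in $\Hilb_{d,e}$ lands in $D_{d,e}^u$ by Lemma~\ref{excexc3}): writing each factor as $v_j(0)+tw_j$ and expanding $\omega(t)=\bigwedge_j(v_j(0)+tw_j)$, a term of degree $m<(i-u)_+$ in $t$ involves $r-m$ of the vectors $v_j(0)$, which together with $m$ of the $w_j$ span at most $\big(r-(i-u)_+\big)+m<r$ dimensions and so wedge to zero, while for general $(A,B,C,F,G)$ there is a nonzero term of degree $(i-u)_+$. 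Hence $\mathrm{ord}_{t=0}\omega(t)=(i-u)_+$, and transversality of the arc gives $m_u=i-u$, which establishes \eqref{eqLi}.

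The main obstacle is this final order-of-vanishing computation. The degree count in the expansion of $\omega(t)$ gives the lower bound $\mathrm{ord}_{t=0}\omega(t)\ge(i-u)_+$ for free, but the matching upper bound—and hence the exact value $m_u=(i-u)_+$—rests on the genericity of the degenerating data $(A,B,C,F,G)$, ensuring that the first-order displacements $w_j$ are in general position relative to the limiting span, together with the transversality of the arc $t\mapsto[AB+tF,AC+tG]$ to $D_{d,e}^u$, which is precisely the content of the limit computation in Proposition~\ref{blowuprec}~\ref{tiii}.
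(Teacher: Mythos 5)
Your construction of the section $\omega$ is, up to the identifications in \eqref{de}, the determinant of the paper's comparison map \eqref{locfreemap} $\pi_{d,e}^*\cG_{d,e}^l\to\cF_{d,e}^l$, so the skeleton of your argument coincides with the paper's: compute the bundle on $U_{d,e}$ via Koszul, observe that the discrepancy is supported on $\bigcup_u D_{d,e}^u$ by Lemma \ref{excexc2}, and evaluate the order of vanishing along $D_{d,e}^u$ on the arc $t\mapsto[AB+tF,AC+tG]$, exactly as in Lemma \ref{multexc}. The well-definedness of $\omega$, the identification of its values with sections of the sub-line-bundle $\det(\cF_{d,e}^l)\otimes\cO(e-d+i,i)$, the computation on $U_{d,e}$, the case $u\geq i$, and the lower bound $\mathrm{ord}_{t=0}\,\omega(t)\geq (i-u)_+$ are all correct as you state them.

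The gap is the matching upper bound, which you justify by saying that for general data the displacements $w_j$ are ``in general position relative to the limiting span.'' They are not in general position in any sense that can be invoked for free: all $r$ of them are multiples of just the two polynomials $F$ and $G$, so the nonvanishing of the coefficient $\sigma_{i-u}$ is a structured statement that must be verified, and genericity alone does not even guarantee that the locus where it holds is nonempty. Concretely, writing $M_0(I,J)=ABI+ACJ$ and $M_1(I,J)=FI+GJ$ on $V_{l-d}\oplus V_{l-e}$, one has $\sigma_{i-u}\neq 0$ if and only if $M_1$ induces an injection from $\ker M_0=\{(Cn,-Bn)\mid n\in V_{l-e-u}\}$ into $(\cF_{d,e}^l)_x/\mathrm{im}\,M_0$; since $M_1(Cn,-Bn)=-n(BG-CF)$, what must be shown is that $n(BG-CF)\in A\cdot\lll B,C\rr_{l-d+u}$ forces $n=0$. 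This does hold, but for a reason specific to the setup: $A$ is coprime to $BG-CF$ for general data, so $A$ divides $n$, and $\deg n=i-u-1<d-u=\deg A$ forces $n=0$. Some such argument is indispensable, and it is precisely the content of the second half of the paper's Lemma \ref{multexc} (a section of $\iota^*\cH_{d,e}^l$ killed by $t^2$ is killed by $t$). As written, your proof only establishes $m_u\geq i-u$, i.e.\ that $\cM_{d,e}^i$ differs from the right-hand side of \eqref{eqLi} by minus an effective divisor supported on the $D_{d,e}^u$ with $u<i$.
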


\begin{rem}
For $i=0$, one must interpret \eqref{eqLi} as $\cM_{d,e}^0\simeq\cO_{\Hilb_{d,e}}(e-d,0)$.
\end{rem}

\begin{proof}
For $l\geq d$, consider the short exact sequence
\begin{equation}
\label{d}
0\to V_{l-d}\otimes\cO_{P_{d,e}}(-1,0)\to V_l\otimes\cO_{P_{d,e}}\to \cQ_{d,e}^l\to0
\end{equation}
of sheaves on $P_{d,e}$ whose fiber over $[F,G]$ is given by
$$0\to V_{l-d}\xrightarrow{\cdot F} V_l\to V_l/\lll F\rr_l\to 0.$$
For $l\geq e-1$, consider the composition
\begin{equation}
\label{e}
V_{l-e}\otimes \cO_{P_{d,e}}(0,-1)\to V_{l-e}\otimes \cQ_{d,e}^e\to \cQ_{d,e}^l
\end{equation}
of the inclusion and of the multiplication map, whose fiber over $[F,G]$ reads
$$V_{l-e}\xrightarrow{\cdot G}V_l/\lll F\rr_l.$$
Consider the pull-back diagram
\begin{equation}
\label{de}
\begin{gathered}
\xymatrix@C=1.5em@R=1.3em{
0\ar[r]&V_{l-d}\otimes\cO_{P_{d,e}}(-1,0)\ar[d]\ar[r]^{}&\cG_{d,e}^l \ar[d]\ar[r]^{}&V_{l-e}\otimes \cO_{P_{d,e}}(0,-1)\ar[d]\ar[r]^{}&0     \\
0\ar[r]&V_{l-d}\otimes\cO_{P_{d,e}}(-1,0)\ar[r]^{}&V_l\otimes\cO_{P_{d,e}}\ar[r]^{}&\cQ_{d,e}^l\ar[r]^{}&0  
}
\end{gathered}
\end{equation}
of locally free sheaves on $P_{d,e}$ whose bottom row is \eqref{d} and whose right vertical arrow is \eqref{e}.
By construction, the image of the fiber at $[F,G]$ of the middle vertical arrow of \eqref{de} is the subspace of $V_l$ spanned by the multiples of $F$ and $G$. It follows that the pull-back 
$\pi_{d,e}^*\cG_{d,e}^l\to V_l\otimes\cO_{\Hilb_{d,e}}$ of the middle vertical arrow of~\eqref{de} factors through $\cF_{d,e}^l$, giving rise to a morphism
\begin{equation}
\label{locfreemap}
\pi_{d,e}^*\cG_{d,e}^l\to\cF_{d,e}^l.
\end{equation}

Now, set $l=e+i-1$. As $0\leq i\leq d$, the source and the target of \eqref{locfreemap} are both locally free sheaves of rank $2i+e-d$ on $\Hilb_{d,e}$. As the fiber of $\cF^l_{d,e}$ over $[F,G]\in U_{d,e}$ is spanned by the multiples of $F$ and $G$, the morphism \eqref{locfreemap} is surjective over $U_{d,e}$, hence an isomorphism over this locus. We deduce that \eqref{locfreemap} is injective. Letting $\cH_{d,e}^l$ denote its cokernel, we get a short exact sequence
\begin{equation}
\label{soscoh}
0\to \pi_{d,e}^*\cG_{d,e}^l\to\cF_{d,e}^l\to\cH_{d,e}^l\to 0.
\end{equation}

By Lemma \ref{excexc2}, the support of $\cH_{d,e}^l$ is set-theoretically included in $\bigcup_{u=1}^{d-1}D_{d,e}^u$. Lemma~\ref{multexc} below shows that, in restriction to an appropriate neighborhood of the generic point of~$D_{d,e}^u$ in~$\Hilb_{d,e}$, the coherent sheaf $\cH^l_{d,e}$ is the pushforward of a vector bundle on~$D_{d,e}^u$, which is of rank~$0$ if~${u>i-1}$ and of rank $i-u$ if~$u\leq i-1$.

Taking the determinant of \eqref{soscoh} and utilizing the first row of \eqref{de} to compute~$\det(\cG_{d,e}^l)$ yields $\cM^i_{d,e}\simeq\cO_{\Hilb_{d,e}}(e-d+i,i)(\sum_{u=1}^{i-1}-(i-u)D_{d,e}^u)$.
\end{proof}

We used the following lemma.

\begin{lem}
\label{multexc}
Fix $1\leq u<d<e$. Let $l$ be an integer such that $e-1\leq l\leq e+d-1$. Let $\cH_{d,e}^l$ be the coherent sheaf on $\Hilb_{d,e}$ defined in the proof of Theorem \ref{thHilb}.
\begin{enumerate}[label=(\roman*)] 
\item
\label{cohi}
If $u>l-e$, then $\cH_{d,e}^l=0$ in a neighborhood of the generic point of~$D_{d,e}^u$.
\item
\label{cohii}
If $u\leq l-e$, then $\cH_{d,e}^l$ is the pushforward of a vector bundle of rank ${l-e-u+1}$ on $D_{d,e}^u$, in some neighborhood of the generic point of~$D_{d,e}^u$.
\end{enumerate}
\end{lem}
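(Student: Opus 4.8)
The plan is to work locally at the generic point $\eta$ of $D_{d,e}^u$. Since $\Hilb_{d,e}$ is smooth and $D_{d,e}^u$ is a divisor, $\cO_{\Hilb_{d,e},\eta}$ is a discrete valuation ring with residue field $\kappa=k(D_{d,e}^u)$, and both assertions can be checked after a faithfully flat base change to a discrete valuation ring $R$ lying over $\eta$, whose uniformizer $t$ cuts out $D_{d,e}^u$. To obtain explicit equations, I would realize such an $R$ through the degeneration of Proposition \ref{blowuprec} \ref{tiii}: for general $(A,B,C,F,G)\in V_{d-u}\oplus V_u\oplus V_{e-d+u}\oplus V_d\oplus V_e$, the family $\{\cF'=\cG'=0\}\subset\A^2_R$ with $\cF':=AB+tF$ and $\cG':=AC+tG$ has, by Proposition \ref{blowuprec} \ref{tiii} and Lemma \ref{excexc3}, flat limit at $t=0$ a general point $Y_0=\psi_{d,e}^u(Z,Z')$ of $D_{d,e}^u$, where $Z=\{A=BG-CF=0\}$ and $Z'=\{B=C=0\}$. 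Pulling back the three sheaves to $R$, the map \eqref{locfreemap} becomes the inclusion into $I(\cY)_l$ (the degree-$l$ part of the ideal of the family) of the submodule $\lll\cF',\cG'\rr_l\subset V_l\otimes R$ spanned by the degree-$l$ multiples of $\cF'$ and $\cG'$; so the pullback of $\cH_{d,e}^l$ is $I(\cY)_l/\lll\cF',\cG'\rr_l$. For general data one has $\gcd(B,C)=1$ and, since $([A,BG-CF],[B,C])$ is general in $\whP_{d-u,e+u}\times\whP_{u,e-d+u}$, also $\gcd(A,BG-CF)=1$.

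The whole computation turns on the identity $B\cG'-C\cF'=tH$ with $H:=BG-CF\in V_{e+u}$, valid on the nose because the degeneration is linear in $t$. For assertion \ref{cohi}, where $l<e+u$, it suffices to look at the fibre at $\eta$: by Lemma \ref{excexc1} the degree-$l$ equations of $Y_0$ are only the products of $A$ with the degree-$(l-d+u)$ equations of $Z'$ (the degree $\geq e+u$ equations of $Z$ contribute nothing in degree $l$), and these already span the image $\lll AB,AC\rr_l$ of the fibre of $\pi_{d,e}^*\cG_{d,e}^l$; hence $\cH_{d,e}^l\otimes\kappa=0$ and Nakayama gives $\cH_{d,e}^l=0$ near $\eta$.

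For assertion \ref{cohii}, where $l\geq e+u$, I would prove $I(\cY)_l=N:=\lll\cF',\cG'\rr_l+H\cdot V_{l-e-u}$. The inclusion $\supseteq$ holds because $HM=t^{-1}\bigl((BM)\cG'-(CM)\cF'\bigr)$ lies in $\lll\cF',\cG'\rr_l\otimes\mathrm{Frac}(R)$, while $I(\cY)_l$ is saturated in $V_l\otimes R$ (the quotient being $R$-free by flatness). For $\subseteq$ I would show that $N$ is itself saturated: if $tP=\cF'S+\cG'T+HM$, reducing modulo $t$ gives $A(BS_0+CT_0)=-HM_0$, so $\gcd(A,H)=1$ together with $\deg M_0=l-e-u<d-u=\deg A$ (here the bound $l\leq d+e-1$ enters) forces $M_0=0$; then $\gcd(B,C)=1$ yields $(S_0,T_0)=(-CT_0',BT_0')$, and substituting and using $B\cG'-C\cF'=tH$ exhibits $P\in N$. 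Granting $I(\cY)_l=N$, the identity gives $t\cdot(H\cdot V_{l-e-u})\subset\lll\cF',\cG'\rr_l$, whereas $\gcd(A,H)=1$ and the same degree bound show conversely that $HM\in\lll\cF',\cG'\rr_l$ forces $t\mid M$. Hence the pullback of $\cH_{d,e}^l$ equals $(H\cdot V_{l-e-u})/\bigl(t\,H\cdot V_{l-e-u}\bigr)\cong V_{l-e-u}\otimes\kappa$, a free $\kappa$-module of rank $l-e-u+1$. Being annihilated by $t$, it descends to $\eta$ and spreads out to the pushforward of a rank $l-e-u+1$ vector bundle on $D_{d,e}^u$ near $\eta$.

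The main obstacle is the equality $I(\cY)_l=N$, equivalently the statement that the cokernel is annihilated by $t$ and not merely by a higher power of it: computing the fibre dimension of $\cH_{d,e}^l$ at $\eta$ is routine, but distinguishing the pushforward of a genuine vector bundle from an arbitrary torsion sheaf of the same generic length requires controlling the family to first order in the uniformizer. This is exactly where the coprimality hypotheses $\gcd(A,BG-CF)=1$ and $\gcd(B,C)=1$ and the degree restriction $l\leq d+e-1$ are used, and choosing the degeneration linear in $t$ — so that $B\cG'-C\cF'=tH$ holds exactly — is what keeps the bookkeeping tractable.
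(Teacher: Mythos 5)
Your proof is correct and follows essentially the same route as the paper's: both compute along the arc $t\mapsto[AB+tF,AC+tG]$ over $k[[t]]$, exploit the exact identity $B(AC+tG)-C(AB+tF)=t(BG-CF)$, and use $\gcd(B,C)=1$, $\gcd(A,BG-CF)=1$ together with the bound $l\leq d+e-1$ to force the relevant degree-$(l-e-u)$ polynomial to vanish. The only cosmetic difference is that you identify the full degree-$l$ ideal of the family explicitly as $\lll\cF',\cG'\rr_l+(BG-CF)\cdot V_{l-e-u}$ and read off the cokernel, whereas the paper obtains the rank from a kernel/cokernel dimension count on the special fiber and separately checks that the cokernel is killed by $t$ rather than merely by $t^2$ --- the underlying computation is identical.
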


\begin{proof}
Choose $(A,B,C,F,G)\in V_{d-u}\oplus V_u\oplus V_{e-d+u}\oplus V_d\oplus V_e$ general. 
The formula $t\mapsto[AB+tF,AC+tG]$ defines a morphism $\Spec(k(\hspace{-.1em}(t)\hspace{-.1em}))\to \Hilb_{d,e}$ which extends to a morphism $\iota:\Spec(k[[t]])\to \Hilb_{d,e}$ by properness of $\Hilb_{d,e}$. By Lemma \ref{excexc3} and Proposition \ref{blowuprec} \ref{tiii}, the image by~$\iota$ of the special point of $\Spec(k[[t]])$ is a general point~$x\in D_{d,e}^u$.

Pulling back \eqref{locfreemap} by $\iota$ and splitting the first row of \eqref{de} after pullback by $\pi_{d,e}\circ\iota$ yields the morphism of $k[[t]]$-modules
\begin{equation}
\label{kt}
V_{l-d}\otimes_k k[[t]]\oplus V_{l-e}\otimes_k k[[t]]=(\pi_{d,e}\circ\iota)^*\cG_{d,e}^l\xrightarrow{(AB+tF, AC+tG)} \iota^*\cF_{d,e}^l,
\end{equation}
where we view $\cF_{d,e}^l$ as a saturated sub-$k[[t]]$-module of $V_l\otimes_kk[[t]]$. By \eqref{soscoh} and right exactness of tensor product, the $k[[t]]$-module $\iota^*\cH_{d,e}^l$ is the cokernel of \eqref{kt}.

We first compute the dimension of the $k$-vector space $(\cH_{d,e}^l)_x=(\iota^*\cH_{d,e}^l)_0$. It is the cokernel of the linear map $V_{l-d}\oplus V_{l-e}\to (\cF_{d,e}^l)_x$ given by $(I,J)\mapsto ABI+ACJ$ (where we view $(\cF_{d,e}^l)_x$ as a subspace of $V_l$). Since $V_{l-d}\oplus V_{l-e}$ and $(\cF_{d,e}^l)_x$ have the same dimension (because $e-1\leq l\leq e+d-1$), the cokernel and the kernel of this linear map have the same dimension. The dimension of the kernel is easy to compute (because $B$ and~$C$ are coprime): it is equal to $0$ if~$u>l-e$ and to~$l-e-u+1$ if $u\leq l-e$.

It remains to show that, in restriction to some neighborhood of $x$, the scheme-theoretic support of $\cH_{d,e}^l$ is included in $\whE_{d,e}^u$. As taking the support commutes with base change, it suffices to show that the support of $\iota^*\cH_{d,e}^l$ is included in the reduced special point of~$\Spec(k[[t]])$. One must prove that if a section of $\iota^*\cH_{d,e}^l$ is killed by~$t^2$, then it is killed by $t$. In turn, it suffices to verify that if~$I\in V_{l-d}\otimes_k k[[t]]$ and~$J\in V_{l-e}\otimes_k k[[t]]$ satisfy $t^2\mid I(AB+tF)+J(AC+tG)$, then~$t\mid I$ and~$t\mid J$. 

Let $I_0,I_1\in V_{l-d}$ and $J_0,J_1\in V_{l-e}$ be the terms of $I$ and $J$ of order~$0$ and $1$ in~$t$. The hypothesis means that $I_0AB+J_0AC=0$ and ${A(I_1B+J_1C)+I_0F+J_0G=0}$. From the first equation and because $B$ and $C$ are coprime, we see that it is possible to write $I_0=CH$ and $J_0=-BH$. Consequently, one sees from the second equation, and from the fact that $A$ and $BG-CF$ are coprime, that $A\mid H$. For degree reasons, one must have $H=0$. It follows that $I_0=J_0=0$, as wanted.
\end{proof}

\subsection{Identifying \texorpdfstring{$\whP_{d,e}$}{hatPde} and \texorpdfstring{$\Hilb_{d,e}$}{Hilbde}}

Recall the definition of $\rho_{d,e}$ given in \S\ref{parboundary}.
 
\begin{thm}
\label{thHilb}
The rational map $\rho_{d,e}:\whP_{d,e}\dashrightarrow \Hilb_{d,e}$ is an isomorphism.
\end{thm}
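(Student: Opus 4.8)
The plan is to show that the birational map $\rho_{d,e}:\whP_{d,e}\dashrightarrow\Hilb_{d,e}$, which is already an isomorphism over $U_{d,e}$ by construction, extends to an isomorphism over the whole boundary. I would proceed by induction on $d$, the base case $d=1$ being trivial since $\Hilb_{1,e}\simeq U_{1,e}=P_{1,e}\simeq\whP_{1,e}\simeq\P^1_k$ (the boundary is empty). Both $\whP_{d,e}$ and $\Hilb_{d,e}$ are smooth projective compactifications of $U_{d,e}$, whose boundaries are strict normal crossings divisors with irreducible components the $(\whE^u_{d,e})_{1\leq u<d}$ (Theorem \ref{blowup} \ref{bi}) and the $(D^u_{d,e})_{1\leq u<d}$ (Lemma \ref{excexc2}) respectively; moreover Lemma \ref{excexc3} tells us that $\rho_{d,e}$ matches $\whE^u_{d,e}$ with $D^u_{d,e}$ generically, and fits into the commutative diagram \eqref{diagbexcdivcomm} in which the vertical arrow $\psi^u_{d,e}$ (resp.\ $\varphi^u_{d,e}$) describes the boundary component in terms of lower-dimensional spaces, the horizontal lower arrow being $(\rho_{d-u,e+u},\rho_{u,e-d+u})$, an isomorphism by the induction hypothesis.

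\emph{First}, I would use the semi-ample line bundles of \S\ref{parsemiample} to produce a morphism $\whP_{d,e}\to\Hilb_{d,e}$ in the correct direction. Pulling back $\cM^i_{d,e}$ along $\rho_{d,e}$ and using Proposition \ref{Licalcul} together with the boundary-matching of Lemma \ref{excexc3}, one computes $\rho_{d,e}^*\cM^i_{d,e}\simeq\cO_{\whP_{d,e}}(e-d+i,i)(-\sum_{u=1}^{i-1}(i-u)\whE^u_{d,e})$ (this identity holds a priori only away from codimension $2$, but extends since $\whP_{d,e}$ is smooth). Because each $\cM^i_{d,e}$ is semi-ample (Proposition \ref{Lisemiample} \ref{Li}), it is globally generated after a power, so the morphisms $\chi^{e+i-1}_{d,e}$ to Grassmannians of \eqref{defchi} assemble, via the closed immersion $\Hilb_{d,e}\hookrightarrow\prod_i\Grass(\HF_{d,e}(e+i-1),V_{e+i-1})$, into a morphism $\Hilb_{d,e}\hookrightarrow\prod_i\P(\cdots)$ cut out by the bundles $\cM^i_{d,e}$. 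The strategy is then to verify that the corresponding linear systems $|\cM^i_{d,e}|$ pull back to base-point-free systems on $\whP_{d,e}$ separating the boundary, so that $\rho_{d,e}$ is in fact a \emph{morphism} $\whP_{d,e}\to\Hilb_{d,e}$.

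\emph{Second}, once $\rho_{d,e}$ is shown to be a morphism, I would prove it is an isomorphism by showing it is bijective and an immersion on the boundary, then invoking Zariski's main theorem (or directly that a birational morphism between smooth projective varieties that is a bijective local isomorphism is an isomorphism). Restricted to each $\whE^u_{d,e}\simeq\whP_{d-u,e+u}\times\whP_{u,e-d+u}$, the induced map is $(\rho_{d-u,e+u},\rho_{u,e-d+u})$ by diagram \eqref{diagbexcdivcomm}, which is an isomorphism onto $\Hilb_{d-u,e+u}\times\Hilb_{u,e-d+u}\simeq D^u_{d,e}$ by induction (this also yields Corollary \ref{coropsi}, that $\psi^u_{d,e}$ is a closed immersion). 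Since the boundary divisors match bijectively and the restriction to each is an isomorphism, and $\rho_{d,e}$ is already an isomorphism over $U_{d,e}$, the map is bijective on points; the normal-crossings structure (matching intersections of the $\whE^u$ with those of the $D^u$, using Theorem \ref{blowup} \ref{biii}--\ref{biv}) lets one check it is an immersion along the boundary as well.

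The main obstacle will be the \emph{first} step: proving that $\rho_{d,e}$ is an honest morphism rather than merely a birational map. The delicate point is that one must control the base locus of the pulled-back linear systems $|\rho_{d,e}^*\cM^i_{d,e}|$ precisely along the boundary $\bigcup_u\whE^u_{d,e}$, where $\rho_{d,e}$ is a priori only defined outside codimension $2$; equivalently, one must show that the natural evaluation morphism defining the Grassmannian embedding of $\Hilb_{d,e}$ pulls back to a \emph{morphism} on all of $\whP_{d,e}$. I expect this to require the explicit description of the sheaves $\cF^l_{d,e}$ and the exact sequence \eqref{soscoh} (with the rank computation of Lemma \ref{multexc}) to identify $\rho_{d,e}^*\cF^l_{d,e}$ as an explicit subbundle of $V_l\otimes\cO_{\whP_{d,e}}$ generated by the families of equations $\{FK\}$ and $\{$degree-$(\geq e+u)$ equations$\}$ exhibited in Lemma \ref{excexc1}—so that the Grassmannian point is well-defined at every boundary point, not just generically. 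Leveraging the product structure of $\whE^u_{d,e}$ and the induction hypothesis, one should be able to write down these equations uniformly in a neighborhood of each boundary stratum, thereby showing the rational map is defined there and completing the argument.
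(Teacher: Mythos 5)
Your overall plan---extend $\rho_{d,e}$ to a morphism $\whP_{d,e}\to\Hilb_{d,e}$ and then use the inductive boundary description to conclude---is reasonable, and your preliminary computation $\rho_{d,e}^*\cM^i_{d,e}\simeq\cO_{\whP_{d,e}}(e-d+i,i)(-\sum_{u=1}^{i-1}(i-u)\whE^u_{d,e})$ is exactly the paper's \eqref{redefLi}. But there is a genuine gap precisely where you flag ``the main obstacle'': you never actually establish that $\rho_{d,e}$ is a morphism, and the sub-claims you would need there are each nontrivial and unproved. (a) Semi-ampleness of $\cM^i_{d,e}$ gives global generation of a power \emph{on $\Hilb_{d,e}$}, but says nothing about base points of the extended line bundle $\rho_{d,e}^*\cM^i_{d,e}$ on $\whP_{d,e}$ inside the codimension-$\geq 2$ indeterminacy locus of $\rho_{d,e}$, which is exactly where the problem lives. (b) Even granting base-point-freeness, the induced morphism to a product of projective spaces must be shown to factor through $\Hilb_{d,e}$ and to agree with $\rho_{d,e}$; this requires knowing that the relevant product of the Grassmannian maps $\chi^l_{d,e}$ is a closed immersion of $\Hilb_{d,e}$, which neither you nor the paper proves. (c) The alternative you mention---exhibiting $\rho_{d,e}^*\cF^l_{d,e}$ as an explicit subbundle of $V_l\otimes\cO_{\whP_{d,e}}$ with locally free quotient over \emph{all} of $\whP_{d,e}$, i.e.\ a flat family of subschemes over $\whP_{d,e}$---is precisely the hard analysis, and it is deferred with ``one should be able to.''

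The paper sidesteps all of this by producing a morphism in the \emph{opposite} direction. It takes $\cL:=\cO_{\whP_{d,e}}(l,m)(-\sum_u n_u\whE^u_{d,e})$, which is ample on $\whP_{d,e}$ for $0<n_{d-1}\ll\dots\ll n_1\ll m\ll l$ by the blow-up construction, and the corresponding $\cM:=\cO_{\Hilb_{d,e}}(l,m)(-\sum_u n_uD^u_{d,e})$, which is semi-ample by Propositions \ref{Lisemiample} and \ref{Licalcul}. Since $\rho_{d,e}$ is an isomorphism in codimension $1$ matching $\whE^u_{d,e}$ with $D^u_{d,e}$ (Lemmas \ref{excexc2} and \ref{excexc3}), and sections of line bundles on smooth projective varieties extend over codimension-$2$ subsets, the section rings of $\cL$ and $\cM$ coincide; taking $\Proj$, the contraction defined by $\cM$ gives a morphism $\Hilb_{d,e}\to\whP_{d,e}$ extending $\rho_{d,e}^{-1}$, and a small proper birational morphism onto the smooth variety $\whP_{d,e}$ must be an isomorphism. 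Note that this argument needs no induction and no extension of $\rho_{d,e}$ across the boundary; if you want to keep your direction, the missing content is essentially Lemma \ref{excexc1}'s equations promoted to a flat family over all of $\whP_{d,e}$, which is substantially more work than the paper's $\Proj$ comparison.
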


\begin{proof}
The construction of $\whP_{d,e}$ as a composition of blow-ups in a projective bundle over a projective space shows that the line bundle
$$\cL:=\cO_{\whP_{d,e}}(l,m)\Big(\sum_{u=1}^{d-1}-n_u\whE_{d,e}^u\Big)$$
on $\whP_{d,e}$ is ample for $0<n_{d-1}\ll\dots\ll n_1\ll m\ll l$
 (recall that the pull-back and the strict transform of $E_{d,e}^u$ in $\whP_{d,e}$ are equal because they coincide away from a locus of codimension $\geq 2$). In addition, the line bundle
$$\cM:=\cO_{\Hilb_{d,e}}(l,m)\Big(\sum_{u=1}^{d-1}-n_uD_{d,e}^u\Big)$$
on $\Hilb_{d,e}$ is semi-ample thanks to Propositions \ref{Lisemiample} \ref{Li} and \ref{Licalcul} because it is a linear combination with nonnegative coefficients of the $(\cM_{d,e}^i)_{0\leq i\leq d}$.

By Lemmas \ref{excexc2} and \ref{excexc3}, the rational map $\rho_{d,e}:\whP_{d,e}\dashrightarrow\Hilb_{d,e}$ is an isomorphism in codimension $1$ sending $\whE_{d,e}^u$ to $D_{d,e}^u$. Let $\Theta_{d,e}$ be the 
biggest open subset on which $\rho_{d,e}$ is an isomorphism. One then has $\cL|_{\Theta_{d,e}}\simeq\cM|_{\Theta_{d,e}}$. Note that
\begin{equation}
\label{ProjM}
\whP_{d,e}=\Proj\Big(\bigoplus_{l\geq 0}H^0(\whP_{d,e},\cL^{\otimes l})\Big)=\Proj\Big(\bigoplus_{l\geq 0}H^0(\Theta_{d,e},\cL^{\otimes l}|_{\Theta_{d,e}})\Big)
\end{equation}
by ampleness of $\cL$, and that the contraction induced by $\cM$ is given by
\begin{equation}
\label{ProjL}
\Hilb_{d,e}\to \Proj\Big(\bigoplus_{l\geq 0}H^0(\Hilb_{d,e},\cM^{\otimes l})\Big)=\Proj\Big(\bigoplus_{l\geq 0}H^0(\Theta_{d,e},\cM^{\otimes l}|_{\Theta_{d,e}})\Big).
\end{equation}
Combining \eqref{ProjM} and \eqref{ProjL} yields a morphism $\Hilb_{d,e}\to\whP_{d,e}$ which is the identity on $\Theta_{d,e}$. This contradicts the smoothness of~$\whP_{d,e}$ proved in Theorem \ref{blowup} \ref{bi}  (see \eg \cite[\S1.40]{Debarre}).
\end{proof}

\begin{cor}
\label{coropsi}
For $1\leq u<d<e$, the morphism $\psi_{d,e}^u$ defined in Lemma \ref{excexc1} is a closed immersion.
\end{cor}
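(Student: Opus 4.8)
The plan is to deduce this immediately from Theorem \ref{thHilb}, combined with the commutative square \eqref{diagbexcdivcomm} of Lemma \ref{excexc3} and the isomorphism $\varphi_{d,e}^u$ of Theorem \ref{blowup} \ref{bii}. The key observation is that, now that Theorem \ref{thHilb} is available, \emph{all} the rational maps $\rho$ appearing in \eqref{diagbexcdivcomm} are genuine isomorphisms. Indeed, since $1\leq u<d<e$, the parameter pairs $(d-u,e+u)$ and $(u,e-d+u)$ both satisfy the standing hypothesis $1\leq d'<e'$, so Theorem \ref{thHilb} applies to $\rho_{d,e}$, to $\rho_{d-u,e+u}$ and to $\rho_{u,e-d+u}$, and all three are isomorphisms.

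First I would reinterpret \eqref{diagbexcdivcomm} as a commutative square of \emph{morphisms}. Its bottom horizontal arrow $(\rho_{d-u,e+u},\rho_{u,e-d+u})$ is then an isomorphism, and its left vertical arrow $\varphi_{d,e}^u$ is an isomorphism by Theorem \ref{blowup} \ref{bii}. Its top horizontal arrow is the restriction $(\rho_{d,e})|_{\whE_{d,e}^u}$ of the isomorphism $\rho_{d,e}\colon\whP_{d,e}\isoto\Hilb_{d,e}$ to the closed subvariety $\whE_{d,e}^u\subset\whP_{d,e}$; restricting an isomorphism to a closed subvariety yields a closed immersion, with image exactly $D_{d,e}^u$.

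Then I would simply solve the commutative square for the right vertical arrow:
$$\psi_{d,e}^u=(\rho_{d,e})|_{\whE_{d,e}^u}\circ\varphi_{d,e}^u\circ(\rho_{d-u,e+u},\rho_{u,e-d+u})^{-1}.$$
This exhibits $\psi_{d,e}^u$ as the composition of two isomorphisms followed by the closed immersion $(\rho_{d,e})|_{\whE_{d,e}^u}$, hence as a closed immersion, as desired. I expect no genuine obstacle here: the only points meriting a word of care are that the dashed arrows in \eqref{diagbexcdivcomm} do upgrade to morphisms once the $\rho$'s are known to be isomorphisms (two morphisms out of a reduced separated scheme that agree on a dense open subset agree everywhere), and that the restriction of $\rho_{d,e}$ to the closed divisor $\whE_{d,e}^u$ is a closed immersion. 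Both are routine, so the substance of the corollary is carried entirely by Theorem \ref{thHilb}.
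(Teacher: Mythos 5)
Your proposal is correct and is essentially the paper's own proof: the paper likewise observes that in diagram \eqref{diagbexcdivcomm} the left vertical arrow is an isomorphism by Theorem \ref{blowup} \ref{bii} and the horizontal arrows are isomorphisms by Theorem \ref{thHilb}, and concludes that the right vertical arrow $\psi_{d,e}^u$ is an isomorphism onto the closed subvariety $D_{d,e}^u$, hence a closed immersion. Your extra remarks (upgrading the dashed arrows to morphisms, identifying $\rho_{d,e}(\whE_{d,e}^u)$ with $D_{d,e}^u$) are routine and consistent with the paper.
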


\begin{proof}
In the commutative diagram \eqref{diagbexcdivcomm} of Lemma \ref{excexc3}, the left vertical arrow is an isomorphism by Theorem \ref{blowup} \ref{bii} and the horizontal arrows are isomorphisms by Theorem~\ref{thHilb}. It follows that the right vertical arrow is an isomorphism, which exactly means that~$\psi_{d,e}^u$ is a closed immersion.
\end{proof}

\section{Birational models of \texorpdfstring{$P_{d,e}$}{Pde}}
\label{secMMP}

In this section, we study varieties constructed as contractions of~$\whP_{d,e}$.

\subsection{The nef cone of \texorpdfstring{$\whP_{d,e}$}{hatPde}}

The isomorphism $\rho_{d,e}:\whP_{d,e}\isoto\Hilb_{d,e}$ (see Theorem~\ref{thHilb}) is such that $\rho_{d,e}(\whE_{d,e}^u)=D_{d,e}^u$ for $1\leq u<d$ (see Lemma \ref{excexc3}). For~$0\leq i\leq d$, we define $\cL_{d,e}^i:=(\rho_{d,e})^*\cM_{d,e}^i$. By Proposition \ref{Licalcul}, one has 

\begin{equation}
\label{redefLi}
\cL_{d,e}^i\simeq\cO_{\whP_{d,e}}(e-d+i,i)\big(\sum_{u=1}^{i-1}-(i-u)\whE_{d,e}^u\big).
\end{equation}

\begin{lem}
\label{restrmi}
Fix $1\leq u<d<e$ and $0\leq i\leq d$. Let $p_1$ and $p_2$ be the projections of $ \whE_{d,e}^u\simeq \whP_{d-u,e+u}\times\whP_{u,e-d+u}$ onto its factors (see \eqref{canoniso}).
\begin{enumerate}[label=(\roman*)] 
\item
\label{trivi}
The line bundle $\cL_{d,e}^{d}$ is trivial.
\item 
\label{trivii}
If $i\geq u$, then $\cL_{d,e}^i|_{\whE_{d,e}^u}\simeq p_1^*\cL^{i-u}_{d-u,e+u}$. 
\item
\label{triviii}
If $i\leq u$, then $\cL_{d,e}^i|_{\whE_{d,e}^u}\simeq p_1^*\cO_{\whP_{d-u,e+u}}(e-d+2i,0)\otimes p_2^*\cL_{u, e-d+u}^i$.
\end{enumerate}
\end{lem}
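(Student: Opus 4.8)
The plan is to deduce \ref{trivi} immediately from the triviality already established for the $\cM_{d,e}^i$, and to obtain \ref{trivii} and \ref{triviii} by restricting the explicit expression \eqref{redefLi} for $\cL_{d,e}^i$ to $\whE_{d,e}^u$, factor by factor, using the restriction formulas collected in Theorem \ref{blowup}. For \ref{trivi}, since $\cL_{d,e}^d=(\rho_{d,e})^*\cM_{d,e}^d$ and $\cM_{d,e}^d$ is trivial by Proposition \ref{Lisemiample} \ref{Lii} (taken with $i=d$), the bundle $\cL_{d,e}^d$ is trivial because $\rho_{d,e}$ is an isomorphism. This remark applies verbatim to any pair of degrees, a fact I will reuse in the proof of \ref{trivii}.

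For the two remaining parts I would restrict
$$\cL_{d,e}^i\simeq\cO_{\whP_{d,e}}(e-d+i,i)\Big(\sum_{v=1}^{i-1}-(i-v)\whE_{d,e}^v\Big)$$
(writing the summation index as $v$ to distinguish it from the fixed $u$) to $\whE_{d,e}^u$. The line bundle factor $\cO_{\whP_{d,e}}(e-d+i,i)$ restricts, by Theorem \ref{blowup} \ref{bvi}, to $p_1^*\cO(e-d+2i,0)\otimes p_2^*\cO(e-d+i,i)$. For each boundary divisor in the sum I would split according to the sign of $v-u$: if $v<u$, part \ref{biii} gives $p_2^*\whE_{u,e-d+u}^v$; if $v>u$, part \ref{biv} gives $p_1^*\whE_{d-u,e+u}^{v-u}$; and if $v=u$, part \ref{bv} gives $p_1^*\cO(1,-1)\otimes p_2^*\big(\cO(1,1)(-\sum_{w=1}^{u-1}\whE_{u,e-d+u}^w)\big)$. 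Collecting all contributions and separating the $p_1^*$ and $p_2^*$ summands is the core of the computation.

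Case \ref{triviii}, where $i\leq u$, is the easy one: the summation index never reaches $u$, so only the $v<u$ contributions occur. The $p_1^*$ part is then just $p_1^*\cO(e-d+2i,0)$, while the $p_2^*$ part assembles into $\cO_{\whP_{u,e-d+u}}(e-d+i,i)\big(\sum_{v=1}^{i-1}-(i-v)\whE_{u,e-d+u}^v\big)$, which is exactly $\cL_{u,e-d+u}^i$ by \eqref{redefLi}; this gives the asserted formula. In case \ref{trivii}, where $i\geq u$, the $v=u$ and $v>u$ contributions also appear (precisely when $i>u$; when $i=u$ only the $v<u$ terms occur, but the resulting identities remain uniform). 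Reindexing the $v>u$ terms by $w=v-u$, the $p_1^*$ part collects into $\cO(e-d+u+i,i-u)\big(\sum_{w=1}^{i-u-1}-((i-u)-w)\whE_{d-u,e+u}^w\big)=\cL_{d-u,e+u}^{i-u}$, while the $p_2^*$ part collects into $\cO(e-d+u,u)\big(\sum_{v=1}^{u-1}-(u-v)\whE_{u,e-d+u}^v\big)=\cL_{u,e-d+u}^u$, which is trivial by \ref{trivi} applied to the pair $(u,e-d+u)$. Hence the restriction equals $p_1^*\cL_{d-u,e+u}^{i-u}$, as claimed.

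The step I expect to demand the most care is the bookkeeping in case \ref{trivii}: one must verify that the contributions of the $\cO(e-d+i,i)$ factor, of the $v=u$ term, and of the $v<u$ terms combine so that the entire $p_2^*$ part is \emph{precisely} $\cL_{u,e-d+u}^u$ (so that its triviality can be invoked), and that the leftover $p_1^*$ contributions reindex \emph{exactly} to $\cL_{d-u,e+u}^{i-u}$. Once the indices are aligned the verification is mechanical, but this matching of the coefficients $-(i-v)$ against the shifted coefficients $-((i-u)-w)$ and $-(u-v)$ is the heart of the argument and the place where a single off-by-one error would spoil both identifications.
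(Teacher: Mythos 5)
Your proposal is correct and follows essentially the same route as the paper: part \ref{trivi} from Proposition \ref{Lisemiample} \ref{Lii}, part \ref{triviii} by restricting \eqref{redefLi} via Theorem \ref{blowup} \ref{biii}--\ref{bvi}, and part \ref{trivii} by the same computation yielding $p_1^*\cL^{i-u}_{d-u,e+u}\otimes p_2^*\cL^{u}_{u,e-d+u}$ and then invoking \ref{trivi} for the pair $(u,e-d+u)$. The coefficient bookkeeping you flag as delicate checks out exactly as you state it.
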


\begin{proof} 
Assertion \ref{trivi} is a consequence of Proposition \ref{Lisemiample} \ref{Lii}.
When $i\leq u$, assertion~\ref{triviii} follows at once from Theorem \ref{blowup}~\ref{biii},~\ref{biv},~\ref{bv} and~\ref{bvi}, in view of \eqref{redefLi}. When~${i\geq u}$, the same argument shows that $\cL_{d,e}^i|_{\whE_{d,e}^u}\simeq p_1^*\cL^{i-u}_{d-u,e+u}\otimes p_2^*\cL_{u, e-d+u}^{u}$. Combining this identity with \ref{trivi} proves \ref{trivii}.
\end{proof}

The nef cone $\Nef(\whP_{d,e})$ of $\whP_{d,e}$ is the closed convex cone in $\Pic(\whP_{d,e})\otimes_{\Z}\R$ generated by nef line bundles.
 
\begin{prop}
\label{nef}
Fix $1\leq d<e$. The cone $\Nef(\whP_{d,e})$ is simplicial, and generated by the semi-ample line bundles $(\cL_{d,e}^i)_{0\leq i<d}$.
\end{prop}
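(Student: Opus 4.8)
The plan is to prove a statement slightly stronger than Proposition~\ref{nef} by induction on $d$: in addition to $\Nef(\whP_{d,e})$ being simplicial with the asserted generators, I will produce effective curves $C_0,\dots,C_{d-1}$ in $\whP_{d,e}$ satisfying $\cL_{d,e}^i\cdot C_j=\delta_{ij}n_j$ for some integers $n_j>0$. These ``dual'' curves carry all the weight. Indeed, each $\cL^i_{d,e}=\rho_{d,e}^*\cM^i_{d,e}$ is semi-ample by Proposition~\ref{Lisemiample}~\ref{Li}, hence nef, so the cone $\sigma$ they span lies in $\Nef(\whP_{d,e})$; the relations $\cL^i_{d,e}\cdot C_j=\delta_{ij}n_j$ force the $\cL^i_{d,e}$ to be linearly independent, and since the Picard rank of $\whP_{d,e}$ equals $d$ (for $d\geq 2$ one starts from $\Pic(P_{d,e})$ of rank $2$, each of the $d-2$ blow-ups along an irreducible smooth center of codimension $\geq 2$ raises the rank by one, and the final blow-up is an isomorphism by Remark~\ref{blowupremarks}~(ii)), the $\cL^i_{d,e}$ form a basis and $\sigma$ is a full-dimensional simplicial cone. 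Finally, writing an arbitrary nef class as $\cL=\sum_i a_i\cL^i_{d,e}$, the inequalities $0\leq \cL\cdot C_j=a_j n_j$ yield $a_j\geq 0$, so $\Nef(\whP_{d,e})\subseteq\sigma$ and equality holds. Thus everything reduces to the construction of the $C_j$.

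I would begin with the base case $d=1$, where $\whP_{1,e}\simeq\P^1_k$ and $\cL^0_{1,e}\simeq\cO_{\P^1_k}(e-1)$ is ample, so $C_0:=\P^1_k$ works. For the inductive step I exploit the restriction formulas of Lemma~\ref{restrmi} on a boundary divisor $\whE^u_{d,e}\simeq\whP_{d-u,e+u}\times\whP_{u,e-d+u}$ (see \eqref{canoniso}). For a curve $C'$ contained in the second factor, set $C:=\{\mathrm{pt}\}\times C'$; Lemma~\ref{restrmi}~\ref{trivii} and~\ref{triviii} give $\cL^i_{d,e}\cdot C=\cL^i_{u,e-d+u}\cdot C'$ for $i\leq u$ and $\cL^i_{d,e}\cdot C=0$ for $i\geq u$. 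Taking $u=j+1$ and choosing $C'$ to be the top dual curve (the one indexed by $u-1=j$) of $\whP_{j+1,e-d+j+1}$ supplied by the induction hypothesis, I obtain for each $0\leq j\leq d-2$ a curve $C_j$ with $\cL^i_{d,e}\cdot C_j=\delta_{ij}n_j$.

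The remaining, and genuinely delicate, case is the top curve $C_{d-1}$, which must pair positively with $\cL^{d-1}_{d,e}$ and trivially with all the others: the construction above can never reach the index $d-1$, since it only sees $\cL^i_{d,e}$ for $i<u\leq d-1$. Here I would instead use a curve $C''\times\{\mathrm{pt}\}$ in the \emph{first} factor of $\whE^1_{d,e}\simeq\whP_{d-1,e+1}\times\whP_{1,e-d+1}$. For such a curve Lemma~\ref{restrmi} yields $\cL^i_{d,e}\cdot C=\cL^{i-1}_{d-1,e+1}\cdot C''$ for $i\geq 1$ and $\cL^0_{d,e}\cdot C=(e-d)\,\cO_{P_{d-1}}(1)\cdot C''$. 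Choosing $C''$ to be the top dual curve of $\whP_{d-1,e+1}$ makes the right-hand sides vanish for $1\leq i\leq d-2$ and positive for $i=d-1$, as desired; the delicate point is the vanishing $\cL^0_{d,e}\cdot C=0$, which holds because the top dual curve of $\whP_{d-1,e+1}$ is orthogonal to $\cL^0_{d-1,e+1}$, a positive multiple of $\cO_{P_{d-1}}(1)$ --- and this orthogonality is valid precisely when $d-1\geq 2$. The degenerate case $d=2$, in which $\whP_{d-1,e+1}$ has Picard rank one and the orthogonality fails, I would treat separately by taking $C_1$ to be a fibre of $\whP_{2,e}=P_{2,e}\to P_2$, for which $\cL^0_{2,e}\cdot C_1=0$ and $\cL^1_{2,e}\cdot C_1=\cO(0,1)\cdot C_1=1$.

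The construction of this top curve $C_{d-1}$ is the main obstacle. The naive boundary curves are blind to the extremal direction $\cL^{d-1}_{d,e}$, and overcoming this requires both switching to the first factor of $\whE^1_{d,e}$ and isolating the low-dimensional degeneracy ($d=2$) where the inductive input breaks down. Once all $d$ dual curves are in hand, the semi-ampleness input and the linear-algebra conclusion sketched in the first paragraph are routine.
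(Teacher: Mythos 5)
Your proposal is correct and follows essentially the same route as the paper: the $\cL^i_{d,e}$ are semi-ample and form a basis of $\Pic(\whP_{d,e})\otimes\R$, so they span a simplicial subcone of the nef cone, and the reverse inclusion is forced by a dual system of effective curves built by induction on $d$ inside the divisors $\whE^u_{d,e}\simeq\whP_{d-u,e+u}\times\whP_{u,e-d+u}$ via the restriction formulas of Lemma~\ref{restrmi}, with one curve (handled as a fibre of $P_{2,e}\to P_2$ when $d=2$) requiring separate treatment. Your bookkeeping of exactly which index resists the naive product construction and why is, if anything, more carefully spelled out than in the paper's own (rather terse) argument.
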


\begin{proof}
The case $d=1$ is trivial (see \S\ref{parPicard}). We henceforth assume that $d\geq 2$.

Since $\whP_{d,e}$ has been constructed from $P_{d,e}$ by blowing up $d-2$ times a connected smooth subvariety of codimension $\geq 2$ (see Remark \ref{blowupremarks} (ii)), the Picard group of $\whP_{d,e}$ has rank $d$ and is generated by $\cO_{\whP_{d,e}}(1,0)$, by $\cO_{\whP_{d,e}}(0,1)$ and by the~$(\whE_{d,e}^u)_{1\leq u\leq d-2}$. By \eqref{redefLi}, the $(\cL_{d,e}^i)_{0\leq i<d}$ form a basis of ${\Pic(\whP_{d,e})\otimes_{\Z}\R}$, so they span a simplicial convex cone~$\Sigma\subset \Pic(\whP_{d,e})\otimes_{\Z}\R$. By Proposition \ref{Lisemiample} \ref{Li}, the $(\cL_{d,e}^i)_{0\leq i<d}$ are semi-ample. It follows that~$\Sigma\subset\Nef(\whP_{d,e})$.

We claim that there exist effective curves~$(\Gamma_j)_{0\leq j<d}$ in $\whP_{d,e}$ such that the degree of $\cL^i_{d,e}$ on $\Gamma_j$ is zero if and only if~$i\neq j$. The claim implies the other inclusion~$\Nef(\whP_{d,e})\subset\Sigma$. Indeed, since an element of $\Nef(\whP_{d,e})$ has nonnegative degree on the~$(\Gamma_j)_{0\leq j<d}$, it must belong to $\Sigma$.

We prove the claim by induction on $d$. Since $\whE_{d,e}^{\,1}\simeq \whP_{d-1,e+1}\times \whP_{1,e-d+1}$ by~\eqref{canoniso}, using the computation of $\cL^i_{d,e}|_{\whE_{d,e}^{\; 1}}$ given in Lemma \ref{restrmi} and applying the induction hypothesis to $\whP_{d-1,e+1}$, it is possible to construct all the $\Gamma_j$ except for~$\Gamma_0$ as curves lying on $\whE_{d,e}^{\,1}$. If $d=2$, choose $\Gamma_0$ to be any curve contracted by the natural map~$\whP_{2,e}\isoto P_{2,e}\to P_2$. If $d>2$, choose $\Gamma_0\subset \whE_{d,e}^{\,2}$ to be any curve contracted by the natural map $\whE_{d,e}^{\,2}\stackrel{\eqref{canoniso}}{\simeq}\whP_{d-2,e+2}\times\whP_{2,e-d+2}\to\whP_{d-2,e+2}\times P_2$. That these choices work is again a consequence of the computations of Lemma~\ref{restrmi}.
\end{proof}

\begin{rem}
Lemma \ref{restrmi} \ref{trivi} can be thought of as computing the class in $\Pic(\whP_{d,e})$ of the strict transform $\whE_{d,e}^{d-1}\subset\whP_{d,e}$ of the resultant divisor. One gets
$$\cO_{\whP_{d,e}}(\whE_{d,e}^{d-1})\simeq\cO_{\whP_{d,e}}(e,d)\big(\sum_{u=1}^{d-2}-(d-u)\whE_{d,e}^u\big).$$
This formula recovers the homogeneity degrees $e$ and $d$ of the resultant polynomial.
\end{rem}

\subsection{The birational models \texorpdfstring{$\oP^{\; i}_{d,e}$}{overlinePide} and \texorpdfstring{$P^{\; i}_{d,e}$}{Pide} of \texorpdfstring{$P_{d,e}$}{Pde}}

Let us introduce contractions of $\whP_{d,e}$, whose existence follows from Proposition \ref{nef}. For $0\leq i<d$, let~$\whP_{d,e}\to\oP^{\; i}_{d,e}$ be the contraction induced by the semi-ample line bundle $\cL_{d,e}^i$ on $\whP_{d,e}$. For~${1\leq i<d}$, let~${\whP_{d,e}\to P^{\; i}_{d,e}}$ be the contraction induced by the face of~$\Nef(\whP_{d,e})$ spanned by~$\cL_{d,e}^{i-1}$ and $\cL_{d,e}^i$. By construction, one has
\begin{equation}
\label{projeq}
\begin{alignedat}{2}
&\oP^{\; i}_{d,e}=\Proj\Big(\bigoplus_{l\geq 0}H^0(\whP_{d,e},(\cL_{d,e}^i)^{\otimes l})\Big)\hspace{1em}\textrm{ and }  \\
& P^{\; i}_{d,e}=\Proj\Big(\bigoplus_{l\geq 0}H^0(\whP_{d,e},(\cL_{d,e}^{i-1}\otimes\cL_{d,e}^i)^{\otimes l})\Big),
\end{alignedat}
\end{equation}
so these varieties are projective. For~${1\leq i<d}$, we consider the induced contractions~${\gamma_{d,e}^{\; i}:P^{\; i}_{d,e}\to\oP^{\; i}_{d,e}}$ and~${\delta_{d,e}^{\; i}:P^{\; i}_{d,e}\to\oP^{\; i-1}_{d,e}}$. In particular, one has $P^{\; 1}_{d,e}=P_{d,e}$ and $\oP^{\; 0}_{d,e}=P_d$, and the contraction~${\delta_{d,e}^{\; 1}:P_{d,e}\to P_d}$ is the natural projection.

\subsection{Differentials of morphisms to Grassmannians}

The following two propositions will help us describe the varieties $\oP^{\; i}_{d,e}$ and $P^{\; i}
_{d,e}$ in \S\ref{parstrat}.

\begin{prop}
\label{imm1}
For all $1\leq d\leq l$, the morphism $\eta_d^l:P_d\to \Grass(l-d+1,V_l)$ defined by $\eta_d^l(\lll F\rr)=\lll F\rr_l$ is immersive.
\end{prop}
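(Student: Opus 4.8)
The plan is to compute the differential $(d\eta_d^l)_{\lll F\rr}$ directly and verify that its kernel is trivial at every point $\lll F\rr\in P_d$; by definition this is exactly immersivity (as in Proposition \ref{immdef1} \ref{di}). Following that proof, I identify $T_{\lll F\rr}P_d$ with $V_d/\lll F\rr$. The image point is $W:=\lll F\rr_l$, the image of the multiplication map $\cdot F\colon V_{l-d}\to V_l$; this map is injective since $F\neq 0$, so $W$ indeed has dimension $l-d+1$ and lies in $\Grass(l-d+1,V_l)$, whose tangent space at $W$ is canonically $\Hom(W,V_l/W)$.

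The main computation is the usual one for a map to a Grassmannian. Deforming $F$ in the direction $H\in V_d$ yields the family $\lll F+tH\rr_l=(F+tH)\cdot V_{l-d}$ of subspaces of $V_l$; lifting a vector $FQ\in W$ (for $Q\in V_{l-d}$) to the section $t\mapsto (F+tH)Q$ of this family and differentiating at $t=0$ gives
\[
(d\eta_d^l)_{\lll F\rr}(H)\colon\ W\to V_l/W,\qquad FQ\longmapsto HQ \bmod \lll F\rr_l.
\]
Thus $H$ lies in the kernel precisely when $F\mid HQ$ for every $Q\in V_{l-d}$, and it remains to deduce $F\mid H$, i.e.\ $H\equiv 0$ in $V_d/\lll F\rr$.

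For this last step I would argue by a gcd computation in the unique factorization domain $k[x,y]$. Writing $D=\gcd(F,H)$, $F=DF'$ and $H=DH'$ with $\gcd(F',H')=1$, the condition $F\mid HQ$ for all $Q$ becomes $F'\mid Q$ for all $Q\in V_{l-d}$. If $\deg F'\geq 1$ this is impossible: when $\deg F'>l-d$ no nonzero $Q\in V_{l-d}$ is divisible by $F'$, while when $1\leq\deg F'\leq l-d$ the multiples of $F'$ of degree $l-d$ form a proper subspace of $V_{l-d}$, so some $Q$ is not divisible by $F'$. Hence $F'$ is a nonzero constant, $F\mid H$, and the kernel is trivial. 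I expect this divisibility argument over an arbitrary field $k$ to be the only point needing care; the degenerate case $l=d$ (where $V_{l-d}=V_0=k$ and $Q=1$ forces $H\in\lll F\rr_d$ at once) fits into the same scheme.
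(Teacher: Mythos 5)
Your proposal is correct and follows essentially the same route as the paper: identify $T_{\lll F\rr}P_d$ with $V_d/\lll F\rr$, compute the differential as $IF\mapsto IH \bmod \lll F\rr_l$, and check the kernel is trivial by showing that $F\mid HQ$ for all $Q\in V_{l-d}$ forces $F\mid H$. The paper asserts this last divisibility step without proof, whereas you supply the (correct) gcd argument in the UFD $k[X_0,X_1]$; this is just added detail, not a different method.
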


\begin{proof}
For $\lll F\rr\in P_d$, one computes that $(d\eta_d^l)_{\lll F\rr}:V_d/\lll F\rr\to \Hom(\lll F\rr_l,V_l/\lll F\rr_l)$ is
induced by $F'\mapsto \big(IF\mapsto IF'\big)$. If $F'\in V_d$ is such that $IF'$ is a multiple of $F$ for all $I\in V_{l-d}$, then $F'$ is a multiple of $F$. This shows that  $(d\eta_d^l)_{\lll F\rr}$ is injective.
\end{proof}

\begin{prop}
\label{imm2}
For $1\leq d<e$ and $e\leq l\leq e+d-1$, associating with $[F,G]$ the subspace $\lll F,G\rr_l$ of $V_l$ spanned by multiples of $F$ and $G$ induces a morphism $\theta_{d,e}^l:P_{d,e}\setminus W_{d,e}^{l-e}\to \Grass(2l+2-d-e,V_l)$ which is immersive on~$P_{d,e}\setminus W_{d,e}^{l-e+1}$.
\end{prop}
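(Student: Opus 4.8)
The plan is to first verify that $\theta_{d,e}^l$ is well defined as a morphism, and then to compute its differential and check injectivity on the prescribed locus. For the well-definedness, fix $[F,G]\in P_{d,e}\setminus W_{d,e}^{l-e}$, so that $\gcd(F,G)$ has degree $<d-(l-e)$; since $e\leq l\leq e+d-1$ we have $l-e<d$, and the subspace $\lll F,G\rr_l\subset V_l$ has the expected dimension $\HF_{d,e}(l)=2l+2-d-e$ exactly when $F$ and $G$ share no common factor of degree $\geq d-(l-e)$ (this dimension count is the same one underlying Lemma~\ref{HF} and \eqref{defHF}). The assignment is independent of the representative $G$ modulo $\lll F\rr_e$, since adding a multiple of $F$ to $G$ does not change the span $\lll F,G\rr_l$, so the map descends to $P_{d,e}$. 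That it is a morphism follows because the $\lll F,G\rr_l$ are the fibers of a locally free subsheaf of $V_l\otimes\cO$ of constant rank on this open locus (the image of the middle vertical arrow of a diagram like \eqref{de}), which induces a morphism to the Grassmannian exactly as in the construction of $\chi_{d,e}^l$ in the proof of Proposition~\ref{Lisemiample}.

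The main computation is the differential. Using the Euler-sequence identification $T_{\lll F\rr}P_d\simeq V_d/\lll F\rr$ and $T_{[F,G]}P_{d,e}\simeq V_d/\lll F\rr\oplus V_e/\lll F,G\rr_e$ (the tangent space to the projective bundle), I expect $(d\theta_{d,e}^l)_{[F,G]}$ to send a tangent vector $(F',G')$ to the homomorphism in $\Hom(\lll F,G\rr_l, V_l/\lll F,G\rr_l)$ determined by
\begin{equation*}
IF+JG\longmapsto IF'+JG' \bmod \lll F,G\rr_l,
\end{equation*}
for $I\in V_{l-d}$ and $J\in V_{l-e}$, in the spirit of the computation in Proposition~\ref{imm1}. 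I would first check this is well defined (the expression $IF'+JG'$ modulo $\lll F,G\rr_l$ depends only on the element $IF+JG$, which uses that $F',G'$ are only defined modulo the relevant subspaces) and then pass to the kernel.

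The heart of the proof is showing injectivity of $(d\theta_{d,e}^l)_{[F,G]}$ when $[F,G]\notin W_{d,e}^{l-e+1}$, i.e. when $\gcd(F,G)$ has degree $\leq d-(l-e)-1=d-l+e-1$. Suppose $(F',G')$ lies in the kernel. Then for all $I\in V_{l-d}$ and $J\in V_{l-e}$ we have $IF'+JG'\in\lll F,G\rr_l$. Taking $J=0$ shows $IF'\in\lll F,G\rr_l$ for every $I$, and symmetrically $JG'\in\lll F,G\rr_l$ for every $J$; the task is to deduce from these membership conditions that $F'\in\lll F\rr$ and $G'\in\lll F,G\rr_e$, i.e. that the tangent vector vanishes. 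The natural tool is the Koszul/syzygy description of $\lll F,G\rr_l$: an element of $V_l$ lies in $\lll F,G\rr_l$ iff it can be written $IF+JG$, and the ambiguity in such a representation is governed by the relation $GF-FG=0$, whose primitive syzygy is $(G/\gcd, -F/\gcd)$. I would write $K:=\gcd(F,G)$, set $F=KF_0$, $G=KG_0$ with $F_0,G_0$ coprime, and use coprimality of $F_0,G_0$ to control divisibility, exactly as in the arguments of Lemma~\ref{Koszullemma} and the proof of Proposition~\ref{immdef1}. The hard part will be handling the degree bookkeeping in the range $e\leq l\leq e+d-1$ so as to turn the hypothesis $\deg K\leq d-l+e-1$ into the conclusion that the only solutions are the trivial ones; the condition $[F,G]\notin W_{d,e}^{l-e+1}$ is precisely what makes the relevant common factor too small to produce a nontrivial kernel element, whereas on $W_{d,e}^{l-e}$ (but not deeper) the map is only immersive, not an immersion, which is why immersivity is asserted exactly on $P_{d,e}\setminus W_{d,e}^{l-e+1}$.
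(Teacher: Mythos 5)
Your setup is correct and matches the paper: the morphism comes from the middle vertical arrow of \eqref{de} (injective on fibers exactly off $W_{d,e}^{l-e}$, so the cokernel is locally free there), and the differential at $[F,G]$, on $T_{[F,G]}P_{d,e}\simeq V_d/\lll F\rr\oplus V_e/\lll F,G\rr_e$, is indeed $(F',G')\mapsto\big(IF+JG\mapsto IF'+JG'\big)$. But the proof stops exactly where the proposition becomes nontrivial. Everything after ``the heart of the proof is showing injectivity'' is a plan, not an argument: you reduce to showing that $IF'+JG'\in\lll F,G\rr_l$ for all $I,J$ forces $F'\in\lll F\rr$ and $G'\in\lll F,G\rr_e$, and then say that writing $K=\gcd(F,G)$ and ``handling the degree bookkeeping'' will do it. That bookkeeping is the entire content, and it is not routine.

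Concretely, two things are missing. First, from $JG'\in\lll F,G\rr_l$ for all $J\in V_{l-e}$ you cannot pass to $G'\in\lll F,G\rr_e$ by a naive gcd/syzygy manipulation: the paper isolates this as Lemma \ref{imm3}, whose proof, after factoring out $\gcd(F,G)$ and then a putative common factor $K$ of $B$ and the reduced $C'$, hinges on the inclusion $\lll L,L'\rr_{l-d+u-v}\subset\lll L,C\rr_{l-d+u-v}$ contradicting the strict negativity of $\HF_{u-v,e-d+u}-\HF_{u-v,e-d+u-v}$ in the relevant degree (this is where the hypothesis $u>l-e+1$, i.e.\ $[F,G]\notin W_{d,e}^{l-e+1}$, enters quantitatively). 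Second, the $F'$-component does not follow from ``$IF'\in\lll F,G\rr_l$ for all $I$'' by divisibility alone, since $\lll F,G\rr_l$ contains multiples of $G$: the paper first applies Lemma \ref{imm3} to $HF'$ for $H\in V_{e-d}$ to get $\lll F'\rr_e\subset\lll F,G\rr_e$, and then argues that if $\lll F'\rr_e\not\subset\lll F\rr_e$ these are two distinct hyperplanes of $\lll F,G\rr_e$, forcing $M:=\gcd(F,F')$ to have degree $d-1$ and to divide $G$, contradicting $[F,G]\notin W_{d,e}^1$. Neither the Hilbert-function inequality nor this hyperplane argument appears in your proposal, so as written the proof of immersivity is incomplete.
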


\begin{proof}
The middle vertical arrow of \eqref{de} is a morphism 
\begin{equation}
\label{subspmor}
\cG_{d,e}^l\to V_l\otimes\cO_{P_{d,e}}
\end{equation}
between locally free sheaves on $P_{d,e}$. The fiber of \eqref{subspmor} at $[F,G]\in P_{d,e}$ can be identified, after splitting the fiber of the first row of~\eqref{de} at $[F,G]$, with the linear map $V_{l-d}\oplus V_{l-e}\to V_l$ given by $(I,J)\mapsto IF+JG$. This linear map is injective if~$[F,G]\notin W_{d,e}^{l-e}$. It follows that the restriction to ${P_{d,e}\setminus W_{d,e}^{l-e}}$ of the cokernel of~\eqref{subspmor} is locally free. We deduce that \eqref{subspmor} induces the required morphism $\theta_{d,e}^l$.

We now fix $[F,G]\in P_{d,e}\setminus W_{d,e}^{l-e+1}$ and show that  $\theta_{d,e}^l$ is immersive at $[F,G]$. Using the rational map $V_d\times V_e\dashrightarrow P_{d,e}$ given by $(F,G)\mapsto [F,G]$, one can identify~$T_{[F,G]}P_{d,e}$ with $V_d/\lll F\rr\oplus V_e/\lll F,G\rr_e$ and compute that
$$(d\theta_{d,e}^l)_{[F,G]}:V_d/\lll F\rr\oplus V_e/\lll F,G\rr_e\to \Hom(\lll F,G\rr_l,V_l/\lll F,G\rr_l)$$
is induced by 
\begin{equation}
\label{diffde}
(F',G')\mapsto \Big(IF+JG\mapsto IF'+JG'\Big).
\end{equation}
Assume that $(F',G')\in V_d\oplus V_e$ is in the kernel of \eqref{diffde}, \ie that $IF'+JG'\in \lll F,G\rr_l$ for all $I\in V_{l-d}$ and $J\in V_{l-e}$. One must show that $F'\in\lll F\rr$ and $G'\in \lll F,G\rr_e$.

Lemma \ref{imm3} below shows that $G'\in \lll F,G\rr_e$. It also shows that  $HF'\in  \lll F,G\rr_e$ for all~$H\in V_{e-d}$. If $\lll F'\rr_e\subset \lll F\rr_e$, then $F'\in \lll F\rr$. Otherwise, the spaces $\lll F'\rr_e$ and $\lll F\rr_e$ are distinct hyperplanes in $\lll F,G\rr_e$. It follows that $M:=\gcd(F,F')$ has degree~$d-1$, and that~$M$ divides $G$. As a consequence, one has $[F,G]\in W^{1}_{d,e}$, which is a contradiction.
\end{proof}

We used the following lemma.

\begin{lem}
\label{imm3}
Fix $d,e,l,u\in\Z$ with $1\leq l-e+1<u<d<e$. Choose $[F,G]$ in~$W_{d,e}^u\setminus W_{d,e}^{u-1}$ and $G'\in V_e$. If ${JG'\in \lll F,G\rr_l}$ for all~$J\in V_{l-e}$, then $G'\in \lll F,G\rr_e$.
\end{lem}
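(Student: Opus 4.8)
The plan is to reduce the statement to Poincar\'e duality for a complete intersection of two binary forms. Let $S:=k[x_0,x_1]$ be the homogeneous coordinate ring of $\P^1$, so that $V_n=S_n$ and $\lll F,G\rr_n=(F,G)_n$ for any forms $F,G$. Since $[F,G]$ lies in $W_{d,e}^u\setminus W_{d,e}^{u-1}$, its gcd $A:=\gcd(F,G)$ has degree exactly $d-u$; I would write $F=AB$ and $G=AC$, so that $B\in V_u$, $C\in V_{e-d+u}$ and $\gcd(B,C)=1$. Then $(F,G)=A\cdot(B,C)$ as ideals, whence $\lll F,G\rr_l=A\cdot\lll B,C\rr_{l-d+u}$, and the hypothesis becomes: $JG'\in A\cdot\lll B,C\rr_{l-d+u}$ for every $J\in V_{l-e}$.

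First I would show that $A\mid G'$. For each irreducible factor $A_i$ of $A$, say with multiplicity $n_i$, one can choose $J\in V_{l-e}$ coprime to $A_i$: a nonzero constant when $l=e$, and otherwise a power $\ell^{l-e}$ of a linear form $\ell$ not dividing $A_i$ (such an $\ell$ exists because $\P^1$ has at least two rational points). From $A\mid JG'$ and $\gcd(A_i,J)=1$, unique factorization in $S$ gives $A_i^{n_i}\mid G'$; as the $A_i^{n_i}$ are pairwise coprime, $A\mid G'$. Writing $G'=AG''$ with $G''\in V_{e-d+u}$ and cancelling the nonzerodivisor $A$, the hypothesis simplifies to $JG''\in\lll B,C\rr_{l-d+u}$ for all $J\in V_{l-e}$, and it remains to prove $G''\in\lll B,C\rr_{e-d+u}$.

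The heart of the matter is the following. Set $R:=S/(B,C)$; since $B$ and $C$ are coprime, this is a graded Artinian complete intersection, hence a Poincar\'e duality algebra whose socle sits in degree $s:=u+(e-d+u)-2$, with perfect multiplication pairings $R_i\times R_{s-i}\to R_s\cong k$. Let $g\in R_{e-d+u}$ be the image of $G''$; the reduced hypothesis says $g\cdot R_{l-e}=0$. I would then multiply by $R_b$ where $b:=s-(l-d+u)=e+u-2-l$. The two inequalities $l\geq e$ and $l\leq e+u-2$ --- which are exactly what $1\leq l-e+1<u$ encodes --- give $l-e\geq 0$ and $b\geq 0$, so, $R$ being generated in degree one, $R_{l-e}\cdot R_b=R_{(l-e)+b}=R_{s-(e-d+u)}$. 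Therefore $g\cdot R_{s-(e-d+u)}=(g\cdot R_{l-e})\cdot R_b=0$ inside $R_s$, and perfectness of the pairing $R_{e-d+u}\times R_{s-(e-d+u)}\to R_s$ forces $g=0$, i.e. $G''\in\lll B,C\rr_{e-d+u}$. Multiplying back by $A$ yields $G'\in\lll F,G\rr_e$, as desired.

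The routine ingredients are the factorization step and the standard complete-intersection duality. The delicate point, and the one on which the whole hypothesis $1\leq l-e+1<u$ is spent, is the bookkeeping of degrees: one must verify simultaneously that $l-e\geq 0$ (so that the multiplication by $R_{l-e}$ is the right condition), that $b=e+u-2-l\geq 0$ (so that multiplying the relation $g\cdot R_{l-e}=0$ by $R_b$ lands precisely in the top degree $R_s$), and that $(l-e)+b=s-(e-d+u)$, so that the resulting vanishing is against the Poincar\'e-dual piece of $R_{e-d+u}$. It is worth noting that $g\cdot R_{l-e}=0$ alone is generally too weak: the extra multiplication by $R_b$ needed to reach the socle degree $s$ is what makes the duality applicable.
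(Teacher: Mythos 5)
Your proof is correct, and it takes a genuinely different route from the paper's. After the common reduction (factor out $A=\gcd(F,G)$, which both arguments perform, and which you justify more carefully than the paper does), the paper proceeds by an elementary gcd manipulation: it adds a multiple of $C$ to $C'$ to force a common factor $K=\gcd(B,C')$ of positive degree, extracts it, and then derives a contradiction from an explicit comparison of the Hilbert functions $\HF_{u-v,e-d+u}$ and $\HF_{u-v,e-d+u-v}$ on the relevant degree range (this is where the paper spends the hypothesis $u>l-e+1$). You instead pass to the Artinian complete intersection $R=k[x_0,x_1]/(B,C)$ and invoke Gorenstein duality: the condition $g\cdot R_{l-e}=0$ is promoted, by multiplying by $R_{e+u-2-l}$ (legitimate since $e\leq l\leq e+u-2$, which is exactly the hypothesis $1\leq l-e+1<u$, and since $R$ is standard graded), to $g\cdot R_{s-(e-d+u)}=0$ in the socle degree $s=2u+e-d-2$, whence $g=0$ by perfection of the pairing. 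Your degree bookkeeping checks out ($l-e+b=u-2=s-(e-d+u)$, and $u\geq 2$ follows from $u>l-e+1\geq 1$, so the dual degree is nonnegative), and the duality statement holds over an arbitrary field. What your approach buys is a case-free, conceptual argument that makes transparent why the numerical hypothesis is exactly what is needed; what the paper's approach buys is that it stays entirely within the elementary toolkit it has already set up (the functions $\HF_{d,e}$ of \eqref{defHF} and Lemma \ref{HF}) without importing the Gorenstein property of complete intersections.
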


\begin{proof}
Set $A:=\gcd(F,G)\in V_{d-u}$. Write $F=AB$ and~$G=AC$ with $B\in V_{u}$ and~$C\in V_{e-d+u}$. 
The hypothesis implies that~$G'=AC'$ for some $C'\in V_{e-d+u}$ and that~${JC'\in\lll B,C\rr_{l-d+u}}$ for all~${J\in V_{l-e}}$. After adding to $C'$ a multiple of~$C$ (which has the effect of adding to~$G'$ a multiple of $G$), we may assume that~$B$ and~$C'$ are not coprime. Set~${K:=\gcd(B,C')\in V_v}$ with $v>0$. Write $B=KL$ and~$C'=KL'$ for some $L\in V_{u-v}$ and $L'\in V_{e-d+u-v}$. As~$B$ and~$C$ are coprime, one has
\begin{equation}
\label{analysesynthese}
JL'\in\lll L,C\rr_{l-d+u-v}\textrm{ \hspace{.5em}for all\;  }J\in V_{l-e}.
\end{equation}
If $e-d+u>l-d+u-v$, then \eqref{analysesynthese} really means that $JL'\in \lll L\rr_{l-d+u-v}$ for all~$J\in V_{l-e}$, \ie that $G'$ is a multiple of $F$. Assume otherwise. Assertion \eqref{analysesynthese} implies that $\lll L,L'\rr_{l-d+u-v}\subset \lll L,C\rr_{l-d+u-v}$. As $L$ is coprime to both $L'$ and~$C$, we deduce that $(\HF_{u-v,e-d+u}-\HF_{u-v,e-d+u-v})(l-d+u-v)\geq 0$ (by Lemma~\ref{HF}). However, it follows from \eqref{defHF} that the function $\HF_{u-v,e-d+u}-\HF_{u-v,e-d+u-v}$ is nonpositive and, since $u>l-e+1$, that $l-d+u-v$ belongs to the range where it is actually negative. This is a contradiction.
\end{proof}

\subsection{Stratifications of \texorpdfstring{$\oP^{\; i}_{d,e}$}{overlinePide} and \texorpdfstring{$P^{\; i}_{d,e}$}{Pide}}
\label{parstrat}

Let $f:X\to Y$ be a morphism of algebraic varieties. We say that an open subset~$U\subset X$ is \textit{saturated} with respect to~$f$ if~$U=f^{-1}(f(U))$.

\begin{lem}
\label{stratesgen}
Fix $1\leq i<d<e$. The open subset $\whP_{d,e}\setminus\bigcup_{u=1}^{i}\whE_{d,e}^u$ of~$\whP_{d,e}$ is saturated with respect to the contraction $\whP_{d,e}\to\oP^{\; i}_{d,e}$, and its image in~$\oP^{\; i}_{d,e}$ is an open subset naturally isomorphic to $P_{d,e}\setminus W_{d,e}^{i}$.
\end{lem}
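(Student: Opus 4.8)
The plan is to analyze the contraction $\whP_{d,e}\to\oP^{\;i}_{d,e}$ induced by the semi-ample line bundle $\cL^i_{d,e}$ through its effect on the boundary divisors $\whE^u_{d,e}$. First I would identify which curves are contracted: a curve $\Gamma\subset\whP_{d,e}$ is contracted precisely when $\cL^i_{d,e}\cdot\Gamma=0$. Using the restriction formulas of Lemma \ref{restrmi}, I would show that for $1\leq u\leq i$, the line bundle $\cL^i_{d,e}|_{\whE^u_{d,e}}$ is (by \ref{triviii} together with \ref{trivi} when relevant, and \ref{trivii} otherwise) a pullback along the first projection $p_1:\whE^u_{d,e}\simeq\whP_{d-u,e+u}\times\whP_{u,e-d+u}\to\whP_{d-u,e+u}$, so that $\cL^i_{d,e}$ is trivial along the $p_2$-fibers of $\whE^u_{d,e}$. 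This gives positive-dimensional fibers of the contraction through every point of $\bigcup_{u=1}^{i}\whE^u_{d,e}$.

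The key technical point is to verify that the open set $\whP_{d,e}\setminus\bigcup_{u=1}^{i}\whE^u_{d,e}$ is saturated, i.e.\ that no fiber of the contraction meets both this open set and its complement. For this I would exhibit, on the complement, the morphism $\theta^l_{d,e}$ of Proposition \ref{imm2} for a suitable $l$ (taking $l=e+i-1$ so that $\HF_{d,e}(l)=2l+2-d-e$ and $W_{d,e}^{l-e}=W_{d,e}^{i-1}$, $W_{d,e}^{l-e+1}=W_{d,e}^i$), and relate the semi-ample bundle $\cM^i_{d,e}=\det(\cF^{e+i-1}_{d,e})^{-1}$ to the pullback of the Pl\"ucker bundle under $\theta^l_{d,e}$ via $\pi_{d,e}$. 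Since $\theta^l_{d,e}$ is immersive on $P_{d,e}\setminus W_{d,e}^i$ by Proposition \ref{imm2}, and the Pl\"ucker bundle is ample, the bundle $\cL^i_{d,e}$ separates tangent directions and points over this locus; combined with the contraction being defined by sections of powers of $\cL^i_{d,e}$, this shows the contraction is an immersion (in fact an isomorphism onto its image) over $P_{d,e}\setminus W_{d,e}^i$. The identification of that image with $P_{d,e}\setminus W_{d,e}^i$ itself follows because $\rho_{d,e}$ is an isomorphism (Theorem \ref{thHilb}), $\cL^i_{d,e}=\rho_{d,e}^*\cM^i_{d,e}$, and $\cM^i_{d,e}=\cO_{\Hilb_{d,e}}(e-d+i,i)(\sum_{u<i}-(i-u)D^u_{d,e})$ differs from $\pi_{d,e}^*\cO_{P_{d,e}}(e-d+i,i)$ only by divisors supported on $\bigcup_{u=1}^{i-1}D^u_{d,e}$, so over the locus away from the first $i$ boundary divisors the contraction agrees with the projection $\pi_{d,e}$ onto $P_{d,e}\setminus W_{d,e}^i$.

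Assembling these, I would argue: the fibers of $\whP_{d,e}\to\oP^{\;i}_{d,e}$ over the image of the open set are single points (by the immersivity/ampleness argument), while every point of $\bigcup_{u=1}^i\whE^u_{d,e}$ lies on a positive-dimensional contracted $p_2$-fiber; hence no contracted fiber can join the two loci, giving saturation. The image is then open (as the complement of the image of the closed, saturated set $\bigcup_{u=1}^i\whE^u_{d,e}$) and isomorphic to $P_{d,e}\setminus W_{d,e}^i$.

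The main obstacle I anticipate is the saturation claim rather than the local isomorphism: one must rule out a contracted curve passing from $\whE^u_{d,e}$ out into the open stratum, which requires knowing that $\cL^i_{d,e}$ is strictly positive transversally to the boundary at boundary points while being trivial along the $p_2$-fibers. This is exactly where the precise restriction formulas of Lemma \ref{restrmi}, and in particular the triviality statement \ref{trivi} used to rewrite \ref{trivii}, must be combined carefully with the immersivity of $\theta^l_{d,e}$ from Proposition \ref{imm2}; handling the interaction of these two sources of information at the generic points of the various $\whE^u_{d,e}$ is the delicate part.
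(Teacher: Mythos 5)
Your middle paragraph correctly isolates the key technical input, which is the same one the paper uses: apply Proposition \ref{imm2} with $l=e+i-1$, identify $\cL_{d,e}^i$ with the pullback of the Pl\"ucker bundle under $\chi_{d,e}^{e+i-1}\circ\rho_{d,e}$, and observe that on $U:=\whP_{d,e}\setminus\bigcup_{u=1}^{i}\whE_{d,e}^u=\pi_{d,e}^{-1}(P_{d,e}\setminus W_{d,e}^{i})$ this morphism descends through $\pi_{d,e}$ to $\theta_{d,e}^{e+i-1}$ on $P_{d,e}\setminus W_{d,e}^{i}$. The gap is in the final assembly. The claim that ``the fibers of $\whP_{d,e}\to\oP^{\;i}_{d,e}$ over the image of the open set are single points'' is false in general: the restriction of the contraction to $U$ is (by the very statement being proved) identified with $\pi_{d,e}|_U:U\to P_{d,e}\setminus W_{d,e}^{i}$, which still collapses $\whE_{d,e}^v\cap U$ onto $W_{d,e}^v\setminus W_{d,e}^{i}$ for $i<v\leq d-2$ with positive-dimensional fibers (these centers have codimension $d-v\geq 2$), so for $i\leq d-3$ the fibers over the image of $U$ are not points. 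What immersivity of $\theta_{d,e}^{e+i-1}$ actually gives is injectivity of the induced map from $P_{d,e}\setminus W_{d,e}^{i}$, not from $U$. Moreover, even granting the corrected statement, the deduction ``hence no contracted fiber can join the two loci'' is a non sequitur: a single fiber could a priori contain both a point of $U$ and a boundary point, in which case it is positive-dimensional, and nothing you have said rules this out; the dichotomy between point-fibers and positive-dimensional fibers only separates the two loci if you already know saturation. (Your first paragraph, producing positive-dimensional contracted loci through boundary points via Lemma \ref{restrmi}, is not needed for this lemma at all; it is relevant for Proposition \ref{strates}. Also, a pullback along $p_1$ is trivial on the fibers of $p_1$, i.e.\ on the copies of the second factor, not on the ``$p_2$-fibers''.)

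The correct way to finish, which is exactly what the paper does and which your ingredients support, is to argue with curves. If $\Gamma\subset\whP_{d,e}$ is an integral curve meeting $U$ with $\cL_{d,e}^i\cdot\Gamma=0$, then $\Gamma$ is contracted by $\chi_{d,e}^{e+i-1}\circ\rho_{d,e}$, which is defined on all of $\whP_{d,e}$ (so there is no issue at the points of $\Gamma$ lying in the boundary). On $\Gamma\cap U$ this morphism equals $\theta_{d,e}^{e+i-1}\circ\pi_{d,e}$, and since $\theta_{d,e}^{e+i-1}$ is an immersion on $P_{d,e}\setminus W_{d,e}^{i}$ (hence has discrete fibers there), the connected set $\pi_{d,e}(\Gamma\cap U)$ is a single point $p\in P_{d,e}\setminus W_{d,e}^{i}$; therefore $\Gamma\subset\overline{\pi_{d,e}^{-1}(p)}=\pi_{d,e}^{-1}(p)\subset U$. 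This shows no contracted curve crosses out of $U$, which yields saturation, and it simultaneously shows that the contraction and $\pi_{d,e}$ have the same fibers on $U$, whence the identification of the image with $P_{d,e}\setminus W_{d,e}^{i}$.
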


\begin{proof}
It suffices to show that an integral curve $\Gamma\subset\whP_{d,e}$ that meets $\whP_{d,e}\setminus\bigcup_{u=1}^{i}\whE_{d,e}^u$ is contracted by $\whP_{d,e}\to\oP^{\; i}_{d,e}$
if and only if it is contracted by $\whP_{d,e}\to P_{d,e}$. If $\Gamma$ is contracted by $\whP_{d,e}\to P_{d,e}$, then since $\cL_{d,e}^i|_{\whP_{d,e}\setminus\bigcup_{u=1}^{i}\whE_{d,e}^u}$ is the pull-back of a line bundle on $P_{d,e}\setminus W_{d,e}^{i}$, it is necessarily contracted by $\whP_{d,e}\to\oP^{\; i}_{d,e}$. 

Conversely, suppose that $\Gamma$ is contracted by $\whP_{d,e}\to\oP^{\; i}_{d,e}$. Then, by definition of $\cL_{d,e}^i$, it is contracted by the morphism~${\chi^{e+i-1}_{d,e}\circ\rho_{d,e}}$ (where~$\chi^{e+i-1}_{d,e}$ is defined in~\eqref{defchi}) induced by $[F,G]\mapsto \lll F,G\rr_{e+i-1}$. By Proposition \ref{imm2}, the restriction~$(\chi^{e+i-1}_{d,e}\circ\rho_{d,e})|_{\whP_{d,e}\setminus\bigcup_{u=1}^{i}\whE_{d,e}^u}$ descends to an immersion defined on $P_{d,e}\setminus W_{d,e}^i$. It follows that, unless $\Gamma$ is included in $\bigcup_{u=1}^{i}\whE_{d,e}^u$, it is contracted by $\whP_{d,e}\to P_{d,e}$.
\end{proof}

\begin{prop}
\label{strates}
Fix $1\leq d<e$.
\begin{enumerate}[label=(\roman*)] 
\item 
\label{strati}
For $0\leq i< d$, the variety $\oP^{\; i}_{d,e}$ has a stratification by $i+1$ locally closed subvarieties with strata isomorphic to $(P_{d-j,e+j}\setminus W_{d-j,e+j}^{i-j})_{0\leq j<i}$ and~$P_{d-i}$.
\item
\label{stratii}
For $1\leq i<d$, the variety $P^{\; i}_{d,e}$ has a stratification by $i$ locally closed subvarieties with strata isomorphic to $(P_{d-j,e+j}\setminus W_{d-j,e+j}^{i-j-1})_{0\leq j<i}$.
\end{enumerate}
\end{prop}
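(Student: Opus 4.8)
The plan is to prove \ref{strati} and \ref{stratii} together by induction on $d$, the engine being the identification $\whE_{d,e}^1\simeq\whP_{d-1,e+1}\times\whP_{1,e-d+1}$ of \eqref{canoniso}. Write $c$ for the relevant contraction: $c\colon\whP_{d,e}\to\oP^{\; i}_{d,e}$ (induced by $\cL_{d,e}^i$) in case \ref{strati}, and $c\colon\whP_{d,e}\to P^{\; i}_{d,e}$ (induced by $\cL_{d,e}^{i-1}\otimes\cL_{d,e}^i$) in case \ref{stratii}. Setting $\whE^0_{d,e}:=\whP_{d,e}$, I would organize the target by the decreasing flag of closed subsets $c(\whE^0_{d,e})\supseteq c(\whE^1_{d,e})\supseteq\cdots$ (running over $0\le u\le i$ in case \ref{strati} and $0\le u\le i-1$ in case \ref{stratii}), and prove that $c(\whE^u_{d,e})\cong\oP^{\; i-u}_{d-u,e+u}$ (resp.\ $P^{\; i-u}_{d-u,e+u}$); the successive differences will be the asserted strata. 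The base cases are $\oP^{\; 0}_{d,e}=P_d$ and $P^{\; 1}_{d,e}=P_{d,e}$.

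First I would check that the flag is decreasing and compute its graded pieces. For $1\le u'\le u$, Lemma \ref{restrmi} \ref{trivii} shows $c|_{\whE^u_{d,e}}$ factors through the first projection $p_1\colon\whE^u_{d,e}\to\whP_{d-u,e+u}$, while Theorem \ref{blowup} \ref{biii} gives $\whE^{u'}_{d,e}\cap\whE^u_{d,e}=p_2^{-1}(\whE^{u'}_{u,e-d+u})$, which surjects onto $\whP_{d-u,e+u}$ under $p_1$; hence $c(\whE^u_{d,e})=c(\whE^{u'}_{d,e}\cap\whE^u_{d,e})\subseteq c(\whE^{u'}_{d,e})$. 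Since $\cL_{d,e}^i|_{\whE^u_{d,e}}$ is a $p_1$-pullback and $\whP_{u,e-d+u}$ is connected, taking $\Proj$ of section rings identifies $c(\whE^u_{d,e})$ with the corresponding contraction of $\whP_{d-u,e+u}$, i.e.\ with $\oP^{\; i-u}_{d-u,e+u}$ (resp.\ $P^{\; i-u}_{d-u,e+u}$). Granting the isomorphism statement below, the graded piece $c(\whE^u_{d,e})\setminus c(\whE^{u+1}_{d,e})$ is the open stratum of $\oP^{\; i-u}_{d-u,e+u}$, which by Lemma \ref{stratesgen} (applied at level $(d-u,e+u)$ with index $i-u\geq 1$) is $P_{d-u,e+u}\setminus W^{i-u}_{d-u,e+u}$; for $u=i$ in case \ref{strati} one has $\oP^{\; 0}_{d-i,e+i}=P_{d-i}$, contributing the single stratum $P_{d-i}$. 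Reindexing yields exactly the lists in \ref{strati} and \ref{stratii}.

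For the open stratum ($u=0$), case \ref{strati} is Lemma \ref{stratesgen} verbatim. For case \ref{stratii} I would prove the analogous statement that $\whP_{d,e}\setminus\bigcup_{u=1}^{i-1}\whE^u_{d,e}$ is saturated for $c$ with image $P_{d,e}\setminus W^{i-1}_{d,e}$; the argument is that of Lemma \ref{stratesgen}, using that on this open set both $\cL^{i-1}_{d,e}$ and $\cL^i_{d,e}$ descend to $P_{d,e}\setminus W^{i-1}_{d,e}$ and that the map $\theta^{e+i-2}_{d,e}$ attached to $\cL^{i-1}_{d,e}$ is immersive there by Proposition \ref{imm2}, so that $\cL^{i-1}_{d,e}$ alone separates the curves not contracted by $\whP_{d,e}\to P_{d,e}$.

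The main obstacle is the isomorphism statement, namely that the induced proper surjection $\bar h\colon\oP^{\; i-1}_{d-1,e+1}\to c(\whE^1_{d,e})$ (resp.\ $P^{\; i-1}_{d-1,e+1}\to c(\whE^1_{d,e})$) is a closed immersion; by induction on $d$ it suffices to treat $u=1$, and since $c(\whE^1_{d,e})$ is closed and $\bar h$ proper, it is enough to prove $\bar h$ injective and unramified. Here I would use that $\oP^{\; i}_{d,e}$ is the ample model of $\cM^i_{d,e}=(\chi^{e+i-1}_{d,e})^*\cO(1)$ (see \eqref{defchi}), hence carries a finite morphism $f$ to $\Grass(e+2i-d,V_{e+i-1})$; because $\HF_{d-1,e+1}(e+i-1)=e+2i-d$ as well, and the degree $e+i-1$ equations of a boundary subscheme $\psi^1_{d,e}(Z_1,Z_2)$ coincide with those of $Z_1$, the composite $f\circ\bar h$ is the analogous finite morphism $f'$ of $\oP^{\; i-1}_{d-1,e+1}$ into the \emph{same} Grassmannian. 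As unramifiedness and injectivity descend from $f'$ to $\bar h$, it remains to show $f'$ is a closed immersion, which I would establish stratum by stratum: on each stratum $f'$ is one of the injective immersions $\theta^{e+i-1}$ or $\eta^{e+i-1}$ of Propositions \ref{imm2} and \ref{imm1}, the point $\Lambda\in\Grass$ determines the stratum (through the degree of the common factor of the elements of $\Lambda$), and the finitely many strata are glued along the flag. The genuinely delicate point---that $\bar h$ is unramified, not merely bijective---is exactly where the immersivity conclusions of Propositions \ref{imm1} and \ref{imm2} enter, and is the heart of the argument.
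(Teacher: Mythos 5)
Your skeleton — Lemma \ref{stratesgen} for the open stratum, the factorization of the contraction restricted to $\whE^u_{d,e}$ through $p_1$ via Lemma \ref{restrmi}, and the use of the Grassmannian maps of Propositions \ref{imm1} and \ref{imm2} to handle the boundary — is the paper's. The structural difference is that you aim to identify each \emph{closed} set $c(\whE^u_{d,e})$ with the full contraction $\oP^{\;i-u}_{d-u,e+u}$ (resp.\ $P^{\;i-u}_{d-u,e+u}$), i.e.\ to show $\bar h$ is an isomorphism, whereas the paper only identifies each \emph{locally closed} stratum with $P_{d-j,e+j}\setminus W^{i-j}_{d-j,e+j}$, first up to seminormalization and then on the nose. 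Your stronger claim is exactly where the argument breaks.

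The gap is in the last step: you reduce to showing that the finite morphism $f'\colon\oP^{\;i-1}_{d-1,e+1}\to\Grass$ is a closed immersion (or at least injective and unramified), and propose to get this ``stratum by stratum'' from the immersivity of $\theta^{e+i-1}$ and $\eta^{e+i-1}$. But Propositions \ref{imm1} and \ref{imm2} only control the differential of $f'$ along each locally closed stratum, i.e.\ on the tangent space \emph{of the stratum}. Unramifiedness of $f'$ at a point $x$ of a positive-codimension stratum requires injectivity of $df'$ on all of $T_x\oP^{\;i-1}_{d-1,e+1}$, which contains directions normal to the stratum; since $\oP^{\;i-1}_{d-1,e+1}$ is defined as a $\Proj$ of a section ring and may well be singular along the deeper strata, nothing in the paper gives you access to that tangent space. (The paper never proves, nor needs, that $\varepsilon^{i}_{d,e}$ is unramified or a closed immersion.) Global injectivity plus stratum-wise immersivity of a finite morphism does not imply unramifiedness, and a finite bijection that ramifies at a boundary point is not an isomorphism, so your ``glued along the flag'' step is unsupported. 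The repair is to demand less of $\bar h$: you only need its restriction to each locally closed stratum of $\oP^{\;i-1}_{d-1,e+1}$ (known by induction, with smooth source $P_{d-1-j,e+1+j}\setminus W^{\bullet}$ or $P_{d-i}$) to be an isomorphism onto its image; there the composite with $f$ is $\theta$ or $\eta$ restricted to a smooth variety, Propositions \ref{imm1} and \ref{imm2} apply verbatim, and a finite injective immersive morphism onto a reduced image is an isomorphism. At that point your proof has become the paper's, organized through $c(\whE^u_{d,e})$ rather than directly through the saturated locally closed subsets $\whE^j_{d,e}\setminus\bigcup_{u>j}\whE^u_{d,e}$ of $\whP_{d,e}$.
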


\begin{proof}
We only prove \ref{strati}, as the proof of \ref{stratii} is entirely analogous. If $i=0$, we know that $\oP^{\; 0}_{d,e}\simeq P_d$, so we may suppose that $i\geq 1$.

Lemma \ref{stratesgen} shows that the subset $\whP_{d,e}\setminus\bigcup_{u=1}^{i}\whE_{d,e}^{u}$ of $\whP_{d,e}$ is saturated with respect to ${\whP_{d,e}\to\oP^{\; i}_{d,e}}$, and that its image in $\oP^{\; i}_{d,e}$ is open and isomorphic to~${P_{d,e}\setminus W_{d,e}^{i}}$.

We claim, and prove by induction on $1\leq j<i$,
that $\whE_{d,e}^j\setminus \bigcup_{u=j+1}^{i}(\whE_{d,e}^u|_{\whE_{d,e}^j})$ is saturated with respect to ${\whP_{d,e}\to\oP^{\; i}_{d,e}}$ and that the seminormalization of its image in $\oP^{\; i}_{d,e}$ is isomorphic to $P_{d-j,e+j}\setminus W_{d-j,e+j}^{i-j}$. Using $\whE_{d,e}^j\simeq \whP_{d-j,e+j}\times\whP_{j,e-d+j}$ (see~\eqref{canoniso}) and letting $p_1$ be the first projection, one has $\whE_{d,e}^u|_{\whE_{d,e}^j} =p_1^*\whE^{u-j}_{d-j,e+j}$ (by Theorem \ref{blowup} \ref{biv}) and $\cL^i_{d,e}|_{\whE_{d,e}^j}\simeq \cL^{i-j}_{d-j,e+j}$ (by Lemma~\ref{restrmi}~\ref{trivii}). The claim therefore follows from Lemma \ref{stratesgen} applied to $\whP_{d-j,e+j}\to\oP^{\; i-j}_{d-j,e+j}$.

As $\whE_{d,e}^{i}$ is the complement of all the saturated subsets of $\whP_{d,e}$ already described, it is also saturated. The description of $\cL_{d,e}^i|_{\whE_{d,e}^i}$ given in Lemma \ref{restrmi} \ref{trivii} shows that the seminormalization of its image in $\oP^{\; i}_{d,e}$ is isomorphic to $P_{d-i}$.

It remains to prove that the strata distinct from the open one are isomorphic to~$(P_{d-j,e+j}\setminus W_{d-j,e+j}^{i-j})_{1\leq j<i}$ and~$P_{d-i}$ on the nose (and not only after seminormalization). By definition of~$\oP^{\; i}_{d,e}$, the morphism ${\chi^{e+i-1}_{d,e}\circ\rho_{d,e}}$ (where~$\chi^{e+i-1}_{d,e}$ is defined in~\eqref{defchi}) factors through~$\oP^{\; i}_{d,e}$, yielding a commutative diagram
\begin{equation}
\begin{gathered}
\label{factorization}
\xymatrix@R=1em{
\whP_{d,e}\ar[rd]\ar[rr]^{\chi^{e+i-1}_{d,e}\circ\,\rho_{d,e}\hspace{6em}} & &\Grass\big(\HF_{d,e}(e+i-1),V_{e+i-1}\big). \\
&  \oP^{\; i}_{d,e} \ar[ru]_{\varepsilon_{d,e}^i}&
}
\end{gathered}
\end{equation}

Fix $1\leq j<i$. The seminormalization $P_{d-j,e+j}\setminus W_{d-j,e+j}^{i-j}\to \oP^{\; i}_{d,e}$ of the $j$-th stratum induces after composition with $\varepsilon_{d,e}^i$ a morphism
\begin{equation}
\label{compositiongrass}
P_{d-j,e+j}\setminus W_{d-j,e+j}^{i-j}\to \Grass\big(\HF_{d,e}(e+i-1),V_{e+i-1}\big).
\end{equation}
The morphism \eqref{compositiongrass} coincides with $\theta_{d-j,e+j}^{e+i-1}|_{(P_{d-j,e+j}\setminus W_{d-j,e+j}^{i-j})}$. Indeed, a generic point $([A,H],[B,C])\in \whE_{d,e}^j\subset\whP_{d,e}$ (see \eqref{canoniso}) is sent by $\chi^{e+i-1}_{d,e}\circ\,\rho_{d,e}$ to the subspace of~$V_{e+i-1}$ generated by the multiples of $AB$, of $AC$ and of $H$ (by Lemmas~\ref{excexc1} and \ref{excexc3}), hence equal to $\lll A,H\rr_{e+i-1}$ because $\lll B,C\rr_{e-d+i+j-1}=V_{e-d+i+j-1}$. It follows from Proposition~\ref{imm2} that~\eqref{compositiongrass} is immersive. We deduce that the seminormalization morphism is immersive, hence an isomorphism onto its image.

The argument for the last stratum is similar. Its seminormalization $P_{d-i}\to\oP^{\; i}_{d,e}$ induces after composition with $\varepsilon_{d,e}^i$ the morphism $\eta_{d,e}^{e+i-1}$, which is immersive by Proposition \ref{imm1}. It follows that the seminormalization morphism is immersive, hence an isomorphism onto its image.
\end{proof}

Let us put forward the following corollary of \eqref{projeq} and Proposition \ref{strates} \ref{strati}.

\begin{cor}
\label{corcompactification}
For $1\leq d<e$, the variety $\oP^{\; d-1}_{d,e}$ is a projective compactification of~$U_{d,e}:=P_{d,e}\setminus\Delta_{d,e}$ whose boundary has codimension $2$.
\end{cor}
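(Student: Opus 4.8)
The goal is Corollary \ref{corcompactification}: that $\oP^{\,d-1}_{d,e}$ is a projective compactification of $U_{d,e}$ whose boundary has codimension $2$. The plan is to read off both the compactification property and the codimension of the boundary directly from the stratification of $\oP^{\,d-1}_{d,e}$ furnished by Proposition \ref{strates} \ref{strati}, applied with $i=d-1$.

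First I would record projectivity. By \eqref{projeq}, the variety $\oP^{\,d-1}_{d,e}$ is a $\Proj$ of the section ring of the semi-ample line bundle $\cL^{d-1}_{d,e}$ on $\whP_{d,e}$, hence is projective. Next I would identify $U_{d,e}$ inside $\oP^{\,d-1}_{d,e}$ as an open stratum. Specialize Proposition \ref{strates} \ref{strati} to $i=d-1$: the strata are $(P_{d-j,e+j}\setminus W^{\,d-1-j}_{d-j,e+j})_{0\leq j<d-1}$ together with $P_{d-(d-1)}=P_1$. The open stratum is the $j=0$ one, namely $P_{d,e}\setminus W^{\,d-1}_{d,e}$; since $W^{\,d-1}_{d,e}=\Delta_{d,e}$ (from \S\ref{secgeneralities}, where it is noted that $W^{\,d-1}_{d,e}=\Delta_{d,e}$), this open stratum is exactly $U_{d,e}=P_{d,e}\setminus\Delta_{d,e}$. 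So $\oP^{\,d-1}_{d,e}$ contains $U_{d,e}$ as a dense open subvariety and is a projective compactification of it.

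It then remains to bound the codimension of the boundary $\oP^{\,d-1}_{d,e}\setminus U_{d,e}$, which is the union of the remaining strata. The key observation is a dimension count: one has $\dim P_{d-j,e+j}=\dim P_{d-j}+\dim\P(V_{e+j}/\lll\cdot\rr)=(d-j)+\big((e+j+1)-(d-j+1)\big)=e+d-j-1+? $, so I would simply compute $\dim P_{d,e}=\dim U_{d,e}$ and compare it with the dimension of each lower stratum $P_{d-j,e+j}\setminus W^{\,d-1-j}_{d-j,e+j}$ for $1\leq j<d-1$ and with $\dim P_1=1$, using that the open dense stratum of $P_{d-j,e+j}$ has the same dimension as $P_{d-j,e+j}$. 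The arithmetic of the dimension of $P_{d',e'}$ as a projective bundle over $P_{d'}$ with fibre dimension $e'-d'$ should show that each boundary stratum has dimension exactly two less than $\dim U_{d,e}$, so that every boundary component has codimension $2$ and none has codimension $1$. The main obstacle, such as it is, is purely bookkeeping: correctly evaluating $\dim P_{d-j,e+j}=(d-j)+(e+j)-(d-j)=e$ (the fibre dimension being $(e+j)-(d-j)$) against $\dim P_{d,e}=e$, and verifying that the strata indexed by $j\geq 1$ and the final $P_1$ stratum all drop dimension by at least $2$, with equality realized so that the boundary is pure of codimension $2$. Since the stratification and the isomorphism types of the strata are already provided by Proposition \ref{strates}, no geometry beyond this dimension count is needed.
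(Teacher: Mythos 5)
Your overall route is exactly the paper's: the paper derives this corollary in one line from \eqref{projeq} (projectivity) and Proposition \ref{strates} \ref{strati} applied with $i=d-1$, identifying the open stratum with $P_{d,e}\setminus W^{d-1}_{d,e}=U_{d,e}$ and checking the remaining strata are small. So the structure of your argument is fine. The problem is that the dimension count — the one step you declare to be the entire content of the proof — is carried out incorrectly. The fibre of $P_{d',e'}\to P_{d'}$ is $\P(V_{e'}/\lll F\rr_{e'})$, and $\dim_k\big(V_{e'}/\lll F\rr_{e'}\big)=(e'+1)-(e'-d'+1)=d'$ (this is the rank of $\cE_{d',e'}$ in \eqref{sesqp}); the fibre therefore has dimension $d'-1$, not $e'-d'$, and $\dim P_{d',e'}=2d'-1$, not $e'$. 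You have confused the rank of the subbundle $V_{e'-d'}\otimes\cO_{P_{d'}}(-1)$ with that of the quotient $\cE_{d',e'}$. Indeed, with your formula one would get $\dim P_{d-j,e+j}=(d-j)+\big((e+j)-(d-j)\big)=e+j$, which \emph{increases} with $j$ and would make the boundary strata larger than the open stratum — the count as written cannot close the argument.

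With the correct arithmetic, $\dim P_{d,e}=2d-1$ and the stratum indexed by $j$ has dimension $2(d-j)-1$, hence codimension $2j$ in $\oP^{\;d-1}_{d,e}$, while the final stratum $P_1$ has codimension $2d-2$. In particular your claimed conclusion that the boundary is \emph{pure} of codimension $2$ is false: only the $j=1$ stratum has codimension $2$; the others are strictly smaller. What the corollary asserts (and what the stratification gives) is only that the boundary, as a closed subset, has codimension $2$: no component of codimension $1$, with codimension $2$ attained by the $j=1$ stratum when $d\geq 2$ (for $d=1$ the boundary is empty). Once the fibre dimension is corrected, this follows immediately, and the rest of your write-up stands.
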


\subsection{Running the MMP on \texorpdfstring{$P_{d,e}$}{Pde}}
\label{parMMP}

We now show that the varieties~$\oP^{\; i}_{d,e}$ and~$P^{\; i}_{d,e}$ can be used to realize the~MMP for $P_{d,e}$.

\begin{prop}
\label{Qfact}
For $1\leq i<d<e$, the variety $P^{\; i}_{d,e}$ is a small $\Q$-factorial modification of $P_{d,e}$. Its nef cone is generated by two semi-ample line bundles whose pull-backs to $\whP_{d,e}$ are positive multiples of $\cL_{d,e}^{i-1}$ and $\cL_{d,e}^{i}$ respectively.
\end{prop}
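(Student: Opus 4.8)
The plan is to deduce the three assertions from the contraction $\phi\colon\whP_{d,e}\to P^{\,i}_{d,e}$ attached to the semi-ample line bundle $\cL^{i-1}_{d,e}\otimes\cL^i_{d,e}$ (see \eqref{projeq}), combined with the stratification of $P^{\,i}_{d,e}$ from Proposition \ref{strates}\ref{stratii}. Since $\cL^{i-1}_{d,e}\otimes\cL^i_{d,e}$ is semi-ample on the smooth (hence normal) variety $\whP_{d,e}$, the morphism $\phi$ satisfies $\phi_*\cO_{\whP_{d,e}}=\cO_{P^{\,i}_{d,e}}$; in particular $P^{\,i}_{d,e}$ is a normal projective variety and the pull-back $\phi^*\colon\Pic(P^{\,i}_{d,e})\to\Pic(\whP_{d,e})$ is injective.

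First I would show that $P_{d,e}\dashrightarrow P^{\,i}_{d,e}$ is an isomorphism in codimension $1$. By Proposition \ref{strates}\ref{stratii}, the dense stratum of $P^{\,i}_{d,e}$ is a copy of $P_{d,e}\setminus W^{i-1}_{d,e}$ on which the birational map restricts to an isomorphism, while the lower strata $(P_{d-j,e+j}\setminus W^{i-j-1}_{d-j,e+j})_{1\le j<i}$ have dimension $2(d-j)-1\le 2d-3$, hence codimension $\ge 2$ in $P^{\,i}_{d,e}$. On the $P_{d,e}$ side, $W^{i-1}_{d,e}$ has codimension $d-i+1\ge 2$ because $i<d$. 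Thus $P_{d,e}$ and $P^{\,i}_{d,e}$ share the open subset $P_{d,e}\setminus W^{i-1}_{d,e}$, whose complement has codimension $\ge 2$ in each, so the birational map is small. Restriction to this common open subset then identifies $\Cl(P^{\,i}_{d,e})$ with $\Cl(P_{d,e})=\Pic(P_{d,e})\simeq\Z^2$.

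Next I would establish $\Q$-factoriality by a rank count. The contractions $\gamma_{d,e}^{\,i}\colon P^{\,i}_{d,e}\to\oP^{\,i}_{d,e}$ and $\delta_{d,e}^{\,i}\colon P^{\,i}_{d,e}\to\oP^{\,i-1}_{d,e}$ yield two Cartier classes $N_i:=(\gamma_{d,e}^{\,i})^*A_i$ and $N_{i-1}:=(\delta_{d,e}^{\,i})^*A_{i-1}$, where $A_i$ and $A_{i-1}$ are ample; as $\gamma_{d,e}^{\,i}\circ\phi$ (resp.\ $\delta_{d,e}^{\,i}\circ\phi$) is the contraction of $\whP_{d,e}$ defined by $\cL^i_{d,e}$ (resp.\ $\cL^{i-1}_{d,e}$), the classes $\phi^*N_i$ and $\phi^*N_{i-1}$ are positive multiples of $\cL^i_{d,e}$ and $\cL^{i-1}_{d,e}$. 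The latter are independent in $\Pic(\whP_{d,e})\otimes\Q$ and $\phi^*$ is injective, so $\Pic(P^{\,i}_{d,e})\otimes\Q$ has rank $\ge 2$. Combined with the injection $\Pic(P^{\,i}_{d,e})\otimes\Q\hookrightarrow\Cl(P^{\,i}_{d,e})\otimes\Q$ (valid as $P^{\,i}_{d,e}$ is normal) and the identification $\Cl(P^{\,i}_{d,e})\otimes\Q\simeq\Q^2$ from the previous step, this forces the rank to be exactly $2$ and the injection to be an isomorphism. Hence every Weil divisor on $P^{\,i}_{d,e}$ is $\Q$-Cartier, i.e.\ $P^{\,i}_{d,e}$ is $\Q$-factorial, and $\{N_{i-1},N_i\}$ is a basis of $\Pic(P^{\,i}_{d,e})\otimes\Q$.

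It then remains to compute $\Nef(P^{\,i}_{d,e})$. I would check that $\phi^*\Nef(P^{\,i}_{d,e})=\Nef(\whP_{d,e})\cap\phi^*\big(\Pic(P^{\,i}_{d,e})\otimes\R\big)$: the inclusion $\subseteq$ holds since pull-back preserves nefness, and $\supseteq$ follows from the projection formula applied to a curve of $\whP_{d,e}$ dominating a given curve of $P^{\,i}_{d,e}$. By the previous paragraph $\phi^*\big(\Pic(P^{\,i}_{d,e})\otimes\R\big)$ is the plane spanned by $\cL^{i-1}_{d,e}$ and $\cL^i_{d,e}$; since $\Nef(\whP_{d,e})$ is the simplicial cone on the basis $(\cL^j_{d,e})_{0\le j<d}$ (Proposition \ref{nef}), this intersection is exactly the two-dimensional face generated by $\cL^{i-1}_{d,e}$ and $\cL^i_{d,e}$. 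Pulling back through the isomorphism $\phi^*$, the cone $\Nef(P^{\,i}_{d,e})$ is generated by the semi-ample bundles $N_{i-1}$ and $N_i$, whose pull-backs are the required positive multiples of $\cL^{i-1}_{d,e}$ and $\cL^i_{d,e}$. The main obstacle is the $\Q$-factoriality: the crux is to recognise that it reduces to the coincidence of the ranks of $\Pic(P^{\,i}_{d,e})\otimes\Q$ and $\Cl(P^{\,i}_{d,e})\otimes\Q$, which marries the control on $\Cl(P^{\,i}_{d,e})$ coming from smallness with the two independent contractions $\gamma_{d,e}^{\,i}$ and $\delta_{d,e}^{\,i}$; checking that the birational map genuinely restricts to the identity on $P_{d,e}\setminus W^{i-1}_{d,e}$, and not merely that the two varieties have isomorphic big open subsets, is the step that needs the most care.
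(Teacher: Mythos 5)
Your proposal is correct and follows essentially the same route as the paper: smallness of $P_{d,e}\dashrightarrow P^{\;i}_{d,e}$ via Proposition \ref{strates} to pin down the class group, the two contractions $\gamma_{d,e}^{\;i}$ and $\delta_{d,e}^{\;i}$ to produce two independent semi-ample Cartier classes forcing Picard rank $2$ and $\Q$-factoriality, and then the identification of the nef cone. The only (harmless) variation is in the last step, where the paper simply notes that the two classes are semi-ample but non-ample, hence lie on the boundary of the two-dimensional nef cone and therefore generate it, whereas you intersect $\phi^*\big(\Pic(P^{\;i}_{d,e})\otimes\R\big)$ with the simplicial cone $\Nef(\whP_{d,e})$ from Proposition \ref{nef}.
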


\begin{proof}
By Proposition \ref{strates}, the rational map $P_{d,e}\dashrightarrow P^{\; i}_{d,e}$ is an isomorphism in codimension $1$. It therefore induces an isomorphism on class groups. We deduce that~$\Cl(P^{\; i}_{d,e})$ has rank $2$. The pull-backs by~${\delta_{d,e}^{\; i}:P^{\; i}_{d,e}\to\oP^{\; i-1}_{d,e}}$ and~${\gamma_{d,e}^{\; i}:P^{\; i}_{d,e}\to\oP^{\; i}_{d,e}}$ of the ample line bundles on~$\oP^{\; i-1}_{d,e}$ and~$\oP^{\; i}_{d,e}$ stemming from~\eqref{projeq} are line bundles on~$P^{\; i}_{d,e}$ whose pull-backs to $\whP_{d,e}$ are positive multiples of~$\cL_{d,e}^{i-1}$ and $\cL_{d,e}^{i}$ respectively. It follows that $\Pic(P^{\; i}_{d,e})$ has rank $\geq 2$, hence equal to $2$, and that $P^{\; i}_{d,e}$ is $\Q$-factorial.

The two elements of $\Pic(P^{\; i}_{d,e})$ introduced above induce the contractions $\delta_{d,e}^{\; i}$ and~$\gamma_{d,e}^{\; i}$. They are thus semi-ample, and on the boundary of $\Nef(P^{\; i}_{d,e})$. Since $P^{\; i}_{d,e}$ has Picard rank $2$, they generate $\Nef(P^{\; i}_{d,e})$. 
\end{proof}

Here is the main theorem of this article.

\begin{thm}
\label{thMMPmain}
Fix $1\leq d<e$.
\begin{enumerate}[label=(\roman*)] 
\item
\label{maini}
The variety $P_{d,e}$ is a Mori dream space and its effective cone is generated by $\cO_{P_{d,e}}(1,0)$ and $\cO_{P_{d,e}}(\Delta_{d,e})$. 
\item
\label{mainii}
The MMP for $P_{d,e}$ flips the strict transforms of the $W_{d,e}^i$ for $1\leq i\leq d-2$ and eventually contracts the strict transform of $W_{d,e}^{d-1}=\Delta_{d,e}$.
\item
\label{mainiii}
The last model of the MMP for $P_{d,e}$ is a projective compactification of~$U_{d,e}$ with boundary of codimension $2$, that admits a stratification whose strata are isomorphic to $(U_{d-j,e+j})_{0\leq j<d}$.
\end{enumerate}
\end{thm}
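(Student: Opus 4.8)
The plan is to deduce the three assertions from the chain of birational models built in this section. First I would record the MMP diagram obtained by assembling the contractions $\gamma_{d,e}^{\; i}:P^{\; i}_{d,e}\to\oP^{\; i}_{d,e}$ and $\delta_{d,e}^{\; i}:P^{\; i}_{d,e}\to\oP^{\; i-1}_{d,e}$:
\[
P_{d,e}=P^{\; 1}_{d,e}\xrightarrow{\gamma_{d,e}^{\; 1}}\oP^{\; 1}_{d,e}\xleftarrow{\delta_{d,e}^{\; 2}}P^{\; 2}_{d,e}\xrightarrow{\gamma_{d,e}^{\; 2}}\cdots\xleftarrow{\delta_{d,e}^{\; d-1}}P^{\; d-1}_{d,e}\xrightarrow{\gamma_{d,e}^{\; d-1}}\oP^{\; d-1}_{d,e}.
\]
By Proposition \ref{Qfact}, each $P^{\; i}_{d,e}$ (for $1\le i<d$) is a small $\Q$-factorial modification of $P_{d,e}$ whose nef cone is the two-dimensional cone spanned by the pull-backs of $\cL_{d,e}^{i-1}$ and $\cL_{d,e}^{i}$. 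The case $d=1$ is trivial since $P_{1,e}\simeq\P^1_k$ (see \S\ref{parPicard}), so I assume $d\ge 2$.

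For \ref{maini} I would apply the criterion of \cite[Proposition 1.11]{MDS}. The variety $P_{d,e}$ is smooth with torsion-free Picard group of rank $2$, the $(P^{\; i}_{d,e})_{1\le i<d}$ are finitely many small $\Q$-factorial modifications, and their nef cones are generated by semi-ample classes. Identifying $\Cl(P_{d,e})$ with $\Cl(P^{\; i}_{d,e})$ via the isomorphisms in codimension $1$ and pushing forward, the image of $\cL_{d,e}^i$ in $\Cl(P_{d,e})$ is $\cO_{P_{d,e}}(e-d+i,i)$ by \eqref{redefLi}; hence the union of the $\Nef(P^{\; i}_{d,e})$ is the cone generated by $\cO_{P_{d,e}}(e-d,0)\sim\cO_{P_{d,e}}(1,0)$ and $\cO_{P_{d,e}}(e-1,d-1)=\cL_{d,e}^{d-1}$, each $\cL_{d,e}^i$ being semi-ample and therefore movable. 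For the effective cone I would argue that $\cO_{P_{d,e}}(1,0)$, being the pull-back of an ample class under the fibration $\delta_{d,e}^{\; 1}:P_{d,e}\to\oP^{\; 0}_{d,e}=P_d$, is nef but not big, hence spans an extremal ray of $\Eff(P_{d,e})$; and that $\Delta_{d,e}$, of class $\cO_{P_{d,e}}(e,d)$, is rigid because its strict transform is contracted by $\gamma_{d,e}^{\; d-1}$, hence spans the other extremal ray. This yields the description of $\Eff(P_{d,e})$ in \ref{maini}.

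\textbf{The main obstacle} is the inclusion $\Mov(P_{d,e})\subset\bigcup_i\Nef(P^{\; i}_{d,e})$, equivalently that no movable class lies strictly between $\cL_{d,e}^{d-1}$ and $\cO_{P_{d,e}}(\Delta_{d,e})$. Here I would use that $\gamma_{d,e}^{\; d-1}$ is the contraction attached to the semi-ample class $\cL_{d,e}^{d-1}$ (so that a multiple of $\cL_{d,e}^{d-1}$ is the pull-back of an ample class under $\gamma_{d,e}^{\; d-1}$) and that it contracts the divisor $\Delta_{d,e}$. For a class of the form $s\,\cL_{d,e}^{d-1}+t\,\cO_{P_{d,e}}(\Delta_{d,e})$ on $P^{\; d-1}_{d,e}$ with $s,t>0$, the projection formula together with $(\gamma_{d,e}^{\; d-1})_*\cO(t\Delta_{d,e})=\cO$ shows that every global section is divisible by the defining section of $\Delta_{d,e}$ to order $t$; thus $\Delta_{d,e}$ is a fixed component and the class is not movable. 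Combined with the reverse inclusion, this pins down $\Mov(P_{d,e})$ and completes \ref{maini}.

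For \ref{mainii} I would read the diagram as an MMP run in the direction opposite to the fibration $\delta_{d,e}^{\; 1}$. For $1\le i<d$, the wall spanned by $\cL_{d,e}^i$ induces $\gamma_{d,e}^{\; i}$ and $\delta_{d,e}^{\; i+1}$ onto $\oP^{\; i}_{d,e}$. Comparing the stratifications of $P^{\; i}_{d,e}$ and $\oP^{\; i}_{d,e}$ in Proposition \ref{strates} and invoking the immersivity of $\theta_{d,e}^{e+i-1}$ on $P_{d,e}\setminus W_{d,e}^{i}$ from Proposition \ref{imm2}, I would show that $\gamma_{d,e}^{\; i}$ is an isomorphism away from the strict transform of $W_{d,e}^i$ and contracts the latter. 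Since $\dim W_{d,e}^i=d+i-1$, its codimension in $P_{d,e}$ is $d-i$; hence $\gamma_{d,e}^{\; i}$ is a small contraction for $1\le i\le d-2$, producing together with $\delta_{d,e}^{\; i+1}$ the flip of the strict transform of $W_{d,e}^i$, whereas for $i=d-1$ the locus $W_{d,e}^{d-1}=\Delta_{d,e}$ is a divisor and $\gamma_{d,e}^{\; d-1}$ is the terminal divisorial contraction, proving \ref{mainii}. Finally, the last model is $\oP^{\; d-1}_{d,e}$, which by Corollary \ref{corcompactification} compactifies $U_{d,e}$ with boundary of codimension $2$; its stratification is given by Proposition \ref{strates}\ref{strati} with $i=d-1$, whose strata $(P_{d-j,e+j}\setminus W_{d-j,e+j}^{d-1-j})_{0\le j<d-1}$ and $P_{d-(d-1)}=P_1$ are exactly the $(U_{d-j,e+j})_{0\le j<d}$, using $\Delta_{d-j,e+j}=W_{d-j,e+j}^{(d-j)-1}$ and $U_{1,e+d-1}\simeq\P^1_k\simeq P_1$. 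This yields \ref{mainiii}.
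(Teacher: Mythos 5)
Your proposal is correct and follows essentially the same route as the paper: it assembles the chain of models \eqref{MMPdiag}, verifies the Hu--Keel definition via Propositions \ref{Qfact} and \ref{strates}, identifies the boundary of $\Eff(P_{d,e})$ from the fibration $\delta_{d,e}^{\;1}$ and the divisorial contraction $\gamma_{d,e}^{\;d-1}$, and reads off \ref{mainii} and \ref{mainiii} from Proposition \ref{strates} and Corollary \ref{corcompactification}. The only substantive difference is that you make explicit (via the projection formula and the rigidity of $\Delta_{d,e}$) the inclusion $\Mov(P_{d,e})\subset\bigcup_i\Nef(P^{\; i}_{d,e})$, a step the paper leaves implicit in the phrase ``Diagram \eqref{MMPdiag} implies that the cones cover the movable cone''; this is a useful elaboration, not a different argument (note only that the relevant reference for the Mori dream space condition is \cite[Definition 1.10]{MDS} rather than Proposition 1.11).
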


\begin{proof}
We may assume that $d>1$. Consider the diagram
\begin{equation}
\begin{gathered}
\label{MMPdiag}
\xymatrix@C=.39em@R=1em{
&P_{d,e}=P_{d,e}^{\; 1}\hspace{1em}\ar[rd]^{\gamma_{d,e}^{\;1}}\ar[ld]_{\delta_{d,e}^{\;1}}&&P_{d,e}^{\; 2}\ar[ld]^{\delta_{d,e}^{\;2}}&\cdots& P_{d,e}^{\; d-2}\ar[rd]_{\gamma_{d,e}^{\;d-2}}&&P_{d,e}^{\; d-1}\ar[ld]_{\delta_{d,e}^{\;d-1}}\ar[rd]^{\gamma_{d,e}^{\;d-1}} \\
P_d=\oP^{\;0}_{d,e}&&  \oP^{\; 1}_{d,e} &&&& \oP^{\;d-2}_{d,e} && \oP^{\;d-1}_{d,e}.
}
\end{gathered}
\end{equation}
 The variety $P_{d,e}$ is the total space of the fibration $\delta_{d,e}^{\; 1}:P_{d,e}\to P_d$. Proposition \ref{strates} implies that~$\gamma_{d,e}^{\; i}$ is a small contraction contracting the strict transform of~$W_{d,e}^i$, and that the small contraction $\delta_{d,e}^{\; i+1}$ is its flip (for $1\leq i\leq d-2$). It also follows from Proposition \ref{strates} that $\gamma_{d,e}^{\; d-1}$ is a divisorial contraction contracting the strict transform of  $W_{d,e}^{d-1}=\Delta_{d,e}$. Diagram \eqref{MMPdiag} implies that the cones $(\Nef(P^{\; i}_{d,e}))_{1\leq i<d}$ cover the movable cone of~$P_{d,e}$. As the $(P^{\; i}_{d,e})_{1\leq i<d}$ are small $\mathbb{Q}$-factorial modifications of~$P_{d,e}$ whose nef cones are spanned by semi-ample line bundles (see Proposition \ref{Qfact}), the hypotheses of \cite[Definition 1.10]{MDS} are satisfied and $P_{d,e}$ is a Mori dream space.

In addition, the existence of the fibration $\delta_{d,e}^{\; 1}$ (resp. of the divisorial contraction~$\gamma_{d,e}^{\; d-1}$) show that $\cO_{P_{d,e}}(1,0)$ (resp. $\cO_{P_{d,e}}(\Delta_{d,e})$) is on the boundary of the effective cone of $P_{d,e}$. As $P_{d,e}$ has Picard rank $2$, they generate it. This proves \ref{maini}.

We have already verified \ref{mainii}. The last model of the MMP for~$P_{d,e}$ is $\oP^{\; d-1}_{d,e}$. Its description in Proposition \ref{strates} \ref{strati} proves \ref{mainiii}.
\end{proof}

\begin{rems}
(i)
Fix $1\leq u<d<e$. Using the description of the $(P^{\; i}_{d,e})_{1\leq i<d}$ given in Proposition~\ref{strates}~\ref{stratii}, one can understand (up to normalization) what happens to~$W_{d,e}^u$ during the MMP for $P_{d,e}$. The normalization of $W_{d,e}^u$ is isomorphic to~${P_{d-u}\times P_{u,e-d+u}}$ (the normalization morphism is the multiplication map $\mu_{d,e}^u$ of~\eqref{defphi}). During the first $u-1$ flips, the variety $W_{d,e}^u$ undergoes the MMP for the second factor $P_{u,e-d+u}$. In particular, after the $(u-1)$-th flip, its normalization becomes isomorphic to $P_{d-u}\times\oP^{\; u-1}_{u, e-d+u}$. During the $u$-th flip, it is contracted via the first projection $P_{d-u}\times\oP^{\; u-1}_{u, e-d+u}\to P_{d-u}$, and then flipped via~$P_{d-u,e+u}\to P_{d-u}$. In the last $d-u-1$ steps, it follows the MMP for $P_{d-u,e+u}$. In particular, in the last model, it gives rise to a subvariety whose normalization is isomorphic to~$\oP^{\; d-u-1}_{d-u,e+u}$.

(ii)
It would be interesting to decide whether $\whP_{d,e}$ itself is a Mori dream space.
\end{rems}

\section{Applications}

\subsection{Projective curves avoiding the resultant divisor}
\label{parcurves}

Using Proposition \ref{strates}, we construct projective curves in $U_{d,e}:=P_{d,e}\setminus\Delta_{d,e}$.

\begin{thm}
\label{completecurves}
The variety $U_{d,e}$ is covered by projective curves.
\end{thm}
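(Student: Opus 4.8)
The plan is to produce, through an arbitrary point of $U_{d,e}$, a complete curve contained in $U_{d,e}$ by intersecting a suitable projective compactification of $U_{d,e}$ with general hyperplanes passing through that point. The key input is that such a compactification exists whose boundary has codimension $2$.

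First I would dispose of the degenerate case $d=1$: here $\Delta_{1,e}=\varnothing$, so $U_{1,e}=P_{1,e}\simeq\P^1_k$ (see \S\ref{parPicard}) is itself a projective curve and the statement is immediate. From now on I assume $d\geq 2$, so that $n:=\dim U_{d,e}=2d-1\geq 3$. I would then take as compactification the variety $X:=\oP^{\;d-1}_{d,e}$. By Corollary \ref{corcompactification}, $X$ is a projective compactification of $U_{d,e}$ whose boundary $Z:=X\setminus U_{d,e}$ has codimension $2$, that is $\dim Z\leq n-2$. Moreover $X$ is irreducible, being the image of the irreducible variety $\whP_{d,e}$, and it is projective, so I fix a closed immersion $X\hookrightarrow\P^N_k$ attached to a very ample line bundle (such a bundle exists by \eqref{projeq}).

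Next, fixing $x\in U_{d,e}$, I would choose $n-1$ general hyperplanes $H_1,\dots,H_{n-1}\subset\P^N_k$ through $x$ and set $C:=X\cap H_1\cap\cdots\cap H_{n-1}$. Two dimension counts are needed. On the one hand, since $x\notin Z$, the linear system of hyperplanes through $x$ restricts to a base-point-free linear system on $Z$; hence each successive general section drops the dimension of $Z\cap H_1\cap\cdots\cap H_j$ by one, and after $n-1>\dim Z$ such sections one obtains $Z\cap C=\varnothing$, so that $C\subseteq X\setminus Z=U_{d,e}$. On the other hand, because $X$ is irreducible of dimension $n$, each intermediate intersection $X\cap H_1\cap\cdots\cap H_j$ is equidimensional of dimension $n-j\geq 1$ for $j\leq n-1$; consequently a general hyperplane through $x$ contains none of its components, the dimension drops by one at each step, and $C$ has pure dimension $1$ while containing $x$. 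Taking the irreducible component of $C$ through $x$ yields a complete irreducible curve passing through $x$ and contained in $U_{d,e}$. As $x\in U_{d,e}$ is arbitrary, this shows that $U_{d,e}$ is covered by projective curves.

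The main obstacle is that the two Bertini-type dimension counts must both be carried out while all the cutting hyperplanes pass through the fixed base point $x$; the point that makes this work is precisely that $x\notin Z$, which keeps the relevant linear systems base-point-free along $Z$, whereas $x$ being a base point on $X$ does not raise the dimension of the generic section of $X$. One final remark is required when $k$ is finite, where sufficiently general sections need not exist over $k$: in that case one may either enlarge the field, construct the curve there, and descend, or simply check the covering statement over $\ok$.
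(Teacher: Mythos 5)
Your proposal is correct and follows essentially the same route as the paper: take the compactification $\oP^{\;d-1}_{d,e}$ of $U_{d,e}$ with codimension $2$ boundary from Corollary \ref{corcompactification} and cut by general hyperplane sections through the given point, the codimension $2$ condition ensuring the resulting curve misses the boundary. The paper states this in two sentences; your additional care with the two Bertini-type dimension counts, the degenerate case $d=1$, and finite base fields fills in details the paper leaves implicit but introduces nothing new in substance.
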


\begin{proof}
By Corollary \ref{corcompactification}, the variety $\oP^{\; d-1}_{d,e}$ is a projective compactification of $U_{d,e}$ with codimension $2$ boundary. A general linear section of dimension $1$ of~$\oP^{\; d-1}_{d,e}$ through any given point of $U_{d,e}$ is a projective curve avoiding the boundary.
\end{proof}

\begin{rem}
\label{explicurve}
I do not know how to construct curves as in Theorem \ref{completecurves} directly, without using the existence of the compactification $\oP^{\; d-1}_{d,e}$ of $U_{d,e}$. However, this is possible in some particular cases.

For $e=d+1$, an explicit complete curve in $U_{d,d+1}$ is the closure of the image of
$$t\mapsto[X_0^{d}+tX_0^{d-1}X_1+\dots+t^{d}X_1^{d}, X_0X_1(X_0^{d-1}+tX_0^{d-2}X_1+\dots+t^{d-1}X_1^{d-1})].$$

In positive characteristic $p$, one may also use $p$-th powers. For instance, for~${e\geq 2}$, there is a well-defined map $\psi_p:P_{1,e}\to P_{p,pe}$ given by $\psi_p([F,G])=[F^p,G^p]$. Its image is a complete curve in $U_{p,pe}$.
\end{rem}

\subsection{A finitely generated ring of invariants}
\label{parCox}

Here is an application of Theorem~\ref{thMMPmain} to invariant theory.

\begin{thm}
\label{thmCox}
For $1\leq d<e$, the $k$-algebra of invariants of the action of $V_{e-d}$ on~$V_d\times V_e$ given by  $H\cdot(F,G)=(F,G+HF)$ is finitely generated.
\end{thm}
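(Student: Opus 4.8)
The plan is to deduce the statement from the fact, established in Theorem~\ref{thMMPmain}~\ref{maini}, that $P_{d,e}$ is a Mori dream space, the bridge being the identification of the ring of invariants with the Cox ring of $P_{d,e}$ announced in \S\ref{parintroMDS}. I would first treat the main case $d\geq 2$. The polynomial ring $\cO(V_d\times V_e)=\mathrm{Sym}(V_d^*)\otimes\mathrm{Sym}(V_e^*)$ carries a $\Z^2$\nobreakdash-grading in which the bidegree $(l,m)$ records the degrees in the coefficients of $F$ and of $G$. A short computation shows that rescaling $F$ or $G$ sends $V_{e-d}$\nobreakdash-invariant functions to $V_{e-d}$\nobreakdash-invariant functions (the translation $G\mapsto G+HF$ being absorbed by rescaling $H$), so this $\Z^2$\nobreakdash-grading descends to $\cO(V_d\times V_e)^{V_{e-d}}$. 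The heart of the matter is then to identify the bidegree $(l,m)$ piece of this invariant ring with $H^0(P_{d,e},\cO_{P_{d,e}}(l,m))$.

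To establish this I would dualize the exact sequence \eqref{sesqp}, which realizes $\cE_{d,e}^*$ as the subsheaf of $V_e^*\otimes\cO_{P_d}$ whose fiber over $\lll F\rr$ is the annihilator $\lll F\rr_e^{\perp}$; consequently the fiber of $\mathrm{Sym}^m\cE_{d,e}^*$ over $\lll F\rr$ is the space of degree $m$ homogeneous functions on $V_e$ invariant under translation by $\lll F\rr_e$. Since $P_{d,e}=\P(\cE_{d,e})$ in the convention where $\cO_{P_{d,e}}(0,-1)$ is the tautological subbundle, one has $\pi_*\cO_{P_{d,e}}(0,m)=\mathrm{Sym}^m\cE_{d,e}^*$, and twisting by $\cO_{P_d}(l)$ and taking global sections identifies $H^0(P_{d,e},\cO(l,m))$ with the bihomogeneous polynomials $\Phi$ of bidegree $(l,m)$ satisfying $\Phi(F,G+HF)=\Phi(F,G)$ for all $H\in V_{e-d}$, that is, with the invariants of bidegree $(l,m)$. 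Summing over $(l,m)\in\Z^2$, and using that both sides vanish outside the first quadrant, yields a graded isomorphism $\cO(V_d\times V_e)^{V_{e-d}}\simeq\Cox(P_{d,e})$ (this is the content of Remark~\ref{remCox}). As $P_{d,e}$ is a Mori dream space with free Picard group, \cite[Proposition~2.9]{MDS} shows that $\Cox(P_{d,e})$ is finitely generated, whence so is the invariant ring.

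The remaining case $d=1$ I would settle by hand, since then $P_{1,e}\simeq\P^1$ has Picard rank $1$ and the bookkeeping above degenerates. Using \eqref{coincidence} one gets $H^0(P_{1,e},\cO(l,m))\simeq H^0(\P^1,\cO_{\P^1}(l-em))$, and summing over all $(l,m)$ exhibits $\cO(V_1\times V_e)^{V_{e-1}}$ as the polynomial algebra on the two coefficients of $F$ (of bidegree $(1,0)$) and the resultant $R_{1,e}$ (of bidegree $(e,1)$), which is manifestly finitely generated.

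The main obstacle is the graded identification of the second paragraph: fixing the $\P(\cE_{d,e})$ convention and the direction of the duality correctly, and verifying that the global sections of $\cO_{P_d}(l)\otimes\mathrm{Sym}^m\cE_{d,e}^*$ are \emph{exactly} the invariant bihomogeneous polynomials, rather than a larger or smaller space. Once this is in place the rest is formal, the substantive input being the Mori dream space property proved earlier.
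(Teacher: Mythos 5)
Your argument is correct, and it rests on the same two pillars as the paper's proof: the Mori dream space property of $P_{d,e}$ (Theorem \ref{thMMPmain} \ref{maini}) combined with \cite[Proposition 2.9]{MDS}, plus an identification of the invariant ring with $\Cox(P_{d,e})$ when $d>1$. Where you genuinely differ is in how that identification is established. The paper argues geometrically: it factors $V_d^*\times V_e$ as a Zariski-locally trivial $V_{e-d}$-torsor over the total space $V_{d,e}$ of the bundle $F\mapsto V_e/\lll F\rr_e$, discards loci of codimension $\geq 2$ (this is where $d>1$ enters, to remove the zero section of $V_{d,e}$), and realizes $P_{d,e}$ as the quotient of $V_{d,e}^*$ by a free $\G_m^2$-action, so that $\Cox(P_{d,e})\isoto\cO(V_{d,e}^*)\simeq\cO(V_d\times V_e)^{V_{e-d}}$ drops out of the weight decomposition. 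You instead compute each bigraded piece algebraically, identifying the bidegree-$(l,m)$ invariants with $H^0\big(P_d,\cO_{P_d}(l)\otimes\mathrm{Sym}^m\cE_{d,e}^*\big)=H^0(P_{d,e},\cO(l,m))$ via the dual of \eqref{sesqp} and the projection formula; this is sound, and the convention issue you flag is settled by \eqref{coincidence}, which pins down $\cO_{P_{d,e}}(0,-1)$ as the tautological subbundle, so $\pi_*\cO(0,m)=\mathrm{Sym}^m\cE_{d,e}^*$ as you need. The paper's route avoids all such bookkeeping; yours has the advantage of exhibiting every bigraded piece of the invariant ring explicitly. Your $d=1$ case also diverges from the paper, which extends the line bundle $V_{1,e}$ trivially over $V_1$ to get $\cO(V_1)[t]$; your generator-by-hand version works but should include the one-line justification that multiplication by $R_{1,e}$ is injective on the invariant ring (a domain) and matches the dimensions $h^0(\P^1_k,\cO(l-em))$, so each bigraded piece equals $R_{1,e}^m\cdot\mathrm{Sym}^{l-em}V_1^*$ and the three generators are algebraically independent, consistently with Remark \ref{remCox}.
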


\begin{proof}
Set $V_d^*:=V_d\setminus\{0\}$. Let $V_{d,e}$ be the total space of the vector bundle on~$V_d^*$ whose fiber over $F$ is $V_e/\lll F\rr_e$. The natural morphism ${V_d^*\times V_e\to V_{d,e}}$ is a Zariski-locally trivial~$V_{e-d}$\,\nobreakdash-torsor on~$V_{d,e}$. As the complement of $V_d^*$ has codimension $\geq 2$ in $V_d$, it follows that
\begin{equation}
\label{invariants}
\cO(V_d\times V_e)^{V_{e-d}}\isoto \cO(V_d^*\times V_e)^{V_{e-d}}\simeq\cO(V_{d,e}).
\end{equation}

Assume first that $d=1$. As the origin has codimension $\geq 2$ in the smooth variety~$V_d$, the line bundle $V_{d,e}$ over $V_d^*$ extends over $V_d$, to a trivial line bundle because~$V_d$ is an affine space. It follows that $\cO(V_{d,e})\simeq\cO(V_d)[t]$ is finitely generated.

Suppose now that $d>1$. Let $V_{d,e}^*$ be the complement of the zero section in~$V_{d,e}$. As $d>1$, the complement of $V_{d,e}^*$ has codimension $\geq 2$ in $V_{d,e}$, so the restriction~map 
\begin{equation}
\label{isofonctions}
\cO(V_{d,e})\isoto \cO(V_{d,e}^*)
\end{equation}
is an isomorphism. The formula $(s,t)\cdot(F,G)=(s F,t G)$ induces a free 
action of~$\G_m^2$ on $V_{d,e}^*$, whose quotient is precisely~$P_{d,e}$. The quotient morphism identifies the space~$H^0(P_{d,e},\cO_{P_{d,e}}(l,m))$ with the subspace of $\cO(V_{d,e})$ on which $(s,t)\in \G_m^2$ acts by multiplication by $s^lt^m$. Summing over all $(l,m)\in\Z^2$ yields an isomorphism
\begin{equation}
\label{isoCox}
\Cox(P_{d,e})\isoto \cO(V_{d,e}^*).
\end{equation}
The $k$-algebra~$\Cox(P_{d,e})$ is finitely generated by \cite[Proposition 2.9]{MDS} and Theorem~\ref{thMMPmain}~\ref{maini}. In view of \eqref{invariants}, \eqref{isofonctions} and \eqref{isoCox}, this proves the theorem.
\end{proof}

\begin{rem}
\label{remCox}
If $d=1$, the $k$-algebras $\Cox(P_{d,e})$ and $\cO(V_d\times V_e)^{V_{e-d}}$ are polynomial rings in respectively $2$ and $3$ variables.
\end{rem}

\bibliographystyle{myamsalpha}
\bibliography{Resultant}

@article {AFS3,
    AUTHOR = {Alper, J. and Fedorchuk, M. and Smyth, D. I.},
     TITLE = {Second flip in the {H}assett--{K}eel program: projectivity},
   JOURNAL = {IMRN},
      YEAR = {2017},
    NUMBER = {24},
     PAGES = {7375--7419}
}

@article {ABCH,
    AUTHOR = {Arcara, D. and Bertram, A. and Coskun, I. and Huizenga, J.},
     TITLE = {The minimal model program for the {H}ilbert scheme of points on {$\Bbb{P}^2$} and {B}ridgeland stability},
   JOURNAL = {Adv. Math.},
    VOLUME = {235},
      YEAR = {2013},
     PAGES = {580--626}
}

@Article{ADLW,
	title = "{H}assett--{K}eel {P}rogram in genus four",
	author = "K. Ascher and K. DeVleming and Y. Liu and X. Wang",
	journal = "preprint 2025, \href{http://arxiv.org/abs/2504.20312}{arXiv:2504.20312}",
        silentyear = "2025"
}

@incollection {BS,
    AUTHOR = {Beltrametti, M. and Sommese, A. J.},
     TITLE = {Zero cycles and {$k$}th order embeddings of smooth projective
              surfaces, \textup{with an appendix by L. G\"ottsche}},
 BOOKTITLE = {Problems in the theory of surfaces and their classification
              ({C}ortona, 1988)},
    SERIES = {Sympos. Math.},
    VOLUME = {XXXII},
     PAGES = {33--48},
 PUBLISHER = {Academic Press, London},
      YEAR = {1991},
}

@article {Degrees,
    AUTHOR = {Benoist, O.},
     TITLE = {Degr\'es d'homog\'en\'eit\'e{} de l'ensemble des intersections
              compl\`etes singuli\`eres},
   JOURNAL = {Ann. Inst. Fourier},
    VOLUME = {62},
      YEAR = {2012},
    NUMBER = {3},
     PAGES = {1189--1214}
}

@article {qp,
    AUTHOR = {Benoist, O.},
     TITLE = {Quelques espaces de modules d'intersections compl\`etes lisses
              qui sont quasi-projectifs},
   JOURNAL = {JEMS},
    VOLUME = {16},
      YEAR = {2014},
    NUMBER = {8},
     PAGES = {1749--1774},
      ISSN = {1435-9855,1435-9863}
}

@article {Complete,
    AUTHOR = {Benoist, O.},
     TITLE = {Complete families of smooth space curves and strong semistability},
   JOURNAL = {Math. Nachr.},
    VOLUME = {291},
      YEAR = {2018},
    NUMBER = {16},
     PAGES = {2354--2366}
}

@article {BCHM,
    AUTHOR = {C. Birkar and P. Cascini and C. Hacon and J. McKernan},
     TITLE = {Existence of minimal models for varieties of log general type},
   JOURNAL = {JAMS},
    VOLUME = {23},
      YEAR = {2010},
    NUMBER = {2},
     PAGES = {405--468}
}

@article {BLZ,
    AUTHOR = {Blanc, J. and Lamy, S. and Zimmermann, S.},
     TITLE = {Quotients of higher-dimensional {C}remona groups},
   JOURNAL = {Acta Math.},
    VOLUME = {226},
      YEAR = {2021},
    NUMBER = {2},
     PAGES = {211--318}
}

@article {CMJL2,
    AUTHOR = {Casalaina-Martin, S. and Jensen, D. and Laza, R.},
     TITLE = {Log canonical models and variation of {GIT} for genus 4
              canonical curves},
   JOURNAL = {J. Algebraic Geom.},
    VOLUME = {23},
      YEAR = {2014},
    NUMBER = {4},
     PAGES = {727--764}
}

@article {ChangRan1,
    AUTHOR = {Chang, M.-C. and Ran, Z.},
     TITLE = {Closed families of smooth space curves},
   JOURNAL = {Duke Math. J.},
    VOLUME = {52},
      YEAR = {1985},
    NUMBER = {3},
     PAGES = {707--713}
}

@article {ChangRan2,
    AUTHOR = {M.-C. Chang and Z. Ran},
     TITLE = {Dimension of families of space curves},
   JOURNAL = {Compositio Math.},
    VOLUME = {90},
      YEAR = {1994},
    NUMBER = {1},
     PAGES = {53--57}
}

@incollection {Corti,
    AUTHOR = {Corti, A.},
     TITLE = {Singularities of linear systems and {$3$}-fold birational
              geometry},
 BOOKTITLE = {Explicit birational geometry of 3-folds},
    SERIES = {London Math. Soc. Lecture Note Ser.},
    VOLUME = {281},
     PAGES = {259--312},
 PUBLISHER = {Cambridge Univ. Press, Cambridge},
      YEAR = {2000}
}

@book {Debarre,
    AUTHOR = {Debarre, O.},
     TITLE = {Higher-dimensional algebraic geometry},
    SERIES = {Universitext},
 PUBLISHER = {Springer-Verlag, New York},
      YEAR = {2001},
     PAGES = {xiv+233}
}

@article {Evain,
    AUTHOR = {L. Evain},
     TITLE = {Irreducible components of the equivariant punctual {H}ilbert schemes},
   JOURNAL = {Adv. Math.},
    VOLUME = {185},
      YEAR = {2004},
    NUMBER = {2},
     PAGES = {328--346},
}

@article {Fogarty1,
    AUTHOR = {Fogarty, J.},
     TITLE = {Algebraic families on an algebraic surface},
   JOURNAL = {Amer. J. Math.},
    VOLUME = {90},
      YEAR = {1968},
     PAGES = {511--521}
}

@article {Fogarty2,
    AUTHOR = {Fogarty, J.},
     TITLE = {Algebraic families on an algebraic surface. {II}. {T}he
              {P}icard scheme of the punctual {H}ilbert scheme},
   JOURNAL = {Amer. J. Math.},
    VOLUME = {95},
      YEAR = {1973},
     PAGES = {660--687}
}

@book {Fulton,
    AUTHOR = {Fulton, W.},
     TITLE = {Intersection theory},
    SERIES = {Ergeb. Math. Grenzgeb. (3)},
    VOLUME = {2},
   EDITION = {Second},
 PUBLISHER = {Springer-Verlag, Berlin},
      YEAR = {1998},
     PAGES = {xiv+470}
}

@article {EGA44,
    AUTHOR = {Grothendieck, A.},
     TITLE = {\'{E}l\'ements de g\'eom\'etrie alg\'ebrique: {IV}. \'{E}tude
              locale des sch\'emas et des morphismes de sch\'emas, {IV}},
   JOURNAL = {Publ. Math. IH\'ES},
    NUMBER = {32},
      YEAR = {1967}
}

@book {Hartshorne,
    AUTHOR = {Hartshorne, R.},
     TITLE = {Algebraic geometry},
    SERIES = {Graduate Texts in Math.},
    VOLUME = {52},
 PUBLISHER = {Springer-Verlag, New York-Heidelberg},
      YEAR = {1977},
     PAGES = {xvi+496}
}

@article {Huizenga,
    AUTHOR = {Huizenga, J.},
     TITLE = {Effective divisors on the {H}ilbert scheme of points in the
              plane and interpolation for stable bundles},
   JOURNAL = {J. Algebraic Geom.},
    VOLUME = {25},
      YEAR = {2016},
    NUMBER = {1},
     PAGES = {19--75}
}

@article {multigraded,
    AUTHOR = {M. Haiman and B. Sturmfels},
     TITLE = {Multigraded {H}ilbert schemes},
   JOURNAL = {J. Algebraic Geom.},
    VOLUME = {13},
      YEAR = {2004},
    NUMBER = {4},
     PAGES = {725--769}
}

@book {HM,
    AUTHOR = {Harris, J. and Morrison, I.},
     TITLE = {Moduli of curves},
    SERIES = {Graduate Texts in Math.},
    VOLUME = {187},
 PUBLISHER = {Springer-Verlag, New York},
      YEAR = {1998},
     PAGES = {xiv+366}
}

@article {HH1,
    AUTHOR = {Hassett, B. and Hyeon, D.},
     TITLE = {Log canonical models for the moduli space of curves: the first
              divisorial contraction},
   JOURNAL = {Trans. AMS},
    VOLUME = {361},
      YEAR = {2009},
    NUMBER = {8},
     PAGES = {4471--4489}
}

@article {HH2,
    AUTHOR = {Hassett, B. and Hyeon, D.},
     TITLE = {Log minimal model program for the moduli space of stable
              curves: the first flip},
   JOURNAL = {Ann. of Math. (2)},
    VOLUME = {177},
      YEAR = {2013},
    NUMBER = {3},
     PAGES = {911--968}
}

@article {Iarrobino,
    AUTHOR = {Iarrobino, A. A.},
     TITLE = {Punctual {H}ilbert schemes},
   JOURNAL = {Mem. AMS},
    VOLUME = {10},
      YEAR = {1977},
    NUMBER = {188},
     PAGES = {viii+112}
}

@article {MDS,
    AUTHOR = {Hu, Y. and Keel, S.},
     TITLE = {Mori dream spaces and {GIT}},
   JOURNAL = {Michigan Math. J.},
    VOLUME = {48},
      YEAR = {2000},
     PAGES = {331--348}
}

@article {LZ,
    AUTHOR = {Li, C. and Zhao, X.},
     TITLE = {The minimal model program for deformations of {H}ilbert
              schemes of points on the projective plane},
   JOURNAL = {Algebr. Geom.},
    VOLUME = {5},
      YEAR = {2018},
    NUMBER = {3},
     PAGES = {328--358}
}

@book {Matsumura,
    AUTHOR = {Matsumura, H.},
     TITLE = {Commutative ring theory},
    SERIES = {Cambridge Studies in Advanced Math.},
    VOLUME = {8},
   EDITION = {Second},
 PUBLISHER = {Cambridge Univ. Press, Cambridge},
      YEAR = {1989},
     PAGES = {xiv+320}
}

@incollection {Nagata,
    AUTHOR = {Nagata, M.},
     TITLE = {On the fourteenth problem of {H}ilbert},
 BOOKTITLE = {Proc. ICM 1958},
     PAGES = {459--462},
 PUBLISHER = {Cambridge Univ. Press, New York},
      YEAR = {1960}
}

@article {SmithMaclagan,
    AUTHOR = {D. Maclagan and G. Smith},
     TITLE = {Smooth and irreducible multigraded {H}ilbert schemes},
   JOURNAL = {Adv. Math.},
    VOLUME = {223},
      YEAR = {2010},
    NUMBER = {5},
     PAGES = {1608--1631}
}

@article {VGIT,
    AUTHOR = {Thaddeus, M.},
     TITLE = {Geometric invariant theory and flips},
   JOURNAL = {JAMS},
    VOLUME = {9},
      YEAR = {1996},
    NUMBER = {3},
     PAGES = {691--723}
}

\end{document}